\theoremstyle{plain}
  \newtheorem{thm}{Theorem}[section]
  \newtheorem{lem}[thm]{Lemma}
  \newtheorem{prop}[thm]{Proposition}
  \newtheorem{cor}[thm]{Corollary} 
\theoremstyle{definition}
  \newtheorem{defn}[thm]{Definition}
	\newtheorem{rmk}[thm]{Remark}
  \theoremstyle{plain}
\numberwithin{equation}{section}
\newcommand\CB{\mathcal{P}}
\newcommand\CL{\mathcal{L}}
\newcommand\CI{\mathcal{I}}
\newcommand\A{\mathcal{A}}
\newcommand\CA{\Omega}
\newcommand\bo{\mathbf{1}}
\newcommand\BZ{\mathbb{Z}}
\newcommand\BR{\mathbb{R}}
\newcommand\HD[1]{H_{d}^{#1}}
\newcommand\PHPR[1]{PH_{d\dl}^{#1}}
\newcommand\PHPL[1]{PH_{d+\dl}^{#1}}
\newcommand\PHDP[1]{PH_{\dpp}^{#1}}
\newcommand\PHDM[1]{PH_{\dpm}^{#1}}
\newcommand\FO[2]{F^{#1}\CA^{#2}}
\newcommand\BFO[2]{\overline{{F^{#1}\CA^{#2}}}}
\newcommand\FHP[2]{F^{#1}H_+^{#2}}
\newcommand\FHM[2]{F^{#1}H_-^{#2}}
\newcommand\Massey[2]{\langle{#1},{#2}\rangle_{p}}
\newcommand\FHPM[2]{F^{#1}H_{\pm}^{#2}}
\newcommand\Ain{{A_\infty}}
\newcommand\dl{d^\Lambda}
\newcommand\dpp{{\partial_+}}
\newcommand\dpm{{\partial_-}}
\newcommand\dppm{{\partial_\pm}}
\newcommand\ddp{{d_+}}
\newcommand\ddm{{d_-}}
\newcommand\sstar{{*_s}}
\newcommand\rstar{{*_r}}
\newcommand\bA{\bar{A}}
\newcommand\pa{\partial}
\newcommand\om{\omega}
\newcommand\w{\wedge}
\newcommand\Bw{\bigwedge}
\newcommand\La{\Lambda}
\newcommand\si{\sigma}
\newcommand\be{\beta}
\newcommand\tmu{{\tilde \mu}}
\DeclareMathOperator{\im}{im}
\DeclareMathOperator{\cok}{coker}
\DeclareMathOperator{\spn}{span}
\newcommand\pil[1]{{\Pi^{#1}}}
\newcommand\rhos{{\rho^*}}
\newcommand\phis{{\phi^*}}
\newcommand\psis{{\psi^*}}
\newcommand\pac{{\delta_C}}
\newcommand\pad{{\delta_D}}
\newcommand\pae{{\delta_E}}
\newcommand\paf{{\delta_F}}
\newcommand\pads{{\delta_D^*}}
\newcommand\paes{{\delta_E^*}}
\def\dmi{\dpm B_{i-2p}}
\def\dmj{\dpm B_{j-2p}}
\def\dmk{\dpm B_{k-2p}}
\def\moi{(-1)^{i}}
\def\moj{(-1)^{j}}
\def\moij{(-1)^{i+j}}
\def\nn{\nonumber}
\def\pip{\pil{p}}
\def\pips{{{\pil{p}}^*}}
\def\Fp{{\mathcal{F}}_p}
\def\lpod{{L^{-(p+1)} d}}
\def\psdl{{\pil{p} \rstar d\, L^{-(p+1)}}}
\def\lpo{{L^{-(p+1)}}}
\def\opo{{\omega^{p+1}}}
\def\pprs{{\pil{p}\,\rstar\,}}
\def\aa{A_i}
\def\ab{A_j}
\def\ac{A_k}
\def\ad{A_l}
\def\bac{{\bar{A}_k}}
\def\pp{\times}
\def\mm{m_3}
\begin{document}

\title{\bf{Cohomology and Hodge Theory on \\
 Symplectic Manifolds: III}}

\author{Chung-Jun Tsai, Li-Sheng Tseng and Shing-Tung Yau \\
\\
}

\date{February 3, 2014}

\maketitle
\begin{abstract} 
We introduce filtered cohomologies of differential forms on symplectic manifolds.  They generalize and include the cohomologies discussed in Paper I and II as a subset.  The filtered cohomologies are finite-dimensional and can be associated with differential elliptic complexes.  Algebraically, we show that the filtered cohomologies give a two-sided resolution of Lefschetz maps, and thereby, they are directly related to the kernels and cokernels of the Lefschetz maps.  We also introduce a novel, non-associative product operation on differential forms for symplectic manifolds.  This product generates an $A_\infty$-algebra structure on forms that underlies the filtered cohomologies and gives them a ring structure.   As an application, we demonstrate how the ring structure of the filtered cohomologies can distinguish different symplectic four-manifolds in the context of a circle times a fibered three-manifold.

\

\end{abstract}

\tableofcontents

\section{Introduction}

On a symplectic manifold $(M^{2n}, \om)$ of dimension $2n$, there is a well-known
$\mathfrak{sl}(2)$ action on the space of differential forms, $\CA^*(M)$.  This action leads directly to what is called the Lefschetz decomposition of a differential $k$-form, $A_k\in\CA^k(M)$, 
\begin{equation}\label{ALdec}
A_k = B_k + \om\wedge B_{k-2} + \om^2\wedge B_{k-4} + \om^3\wedge B_{k-6} + \ldots ~,
\end{equation}
where the forms $B_s\in \CB^s(M)$, for $0\leq s \leq n$, denote the so-called {\it primitive} forms.  The primitive forms are the highest weight elements of the $\mathfrak{sl}(2)$ action and in \eqref{ALdec} are uniquely determined by the given $A_k$.  

In Paper I and II \cite{TY1, TY2}, several symplectic cohomologies of differential forms,  labeled by $\{H_{d+\dl}, H_{d\dl}, H_{\dpp}, H_{\dpm}\}$, were introduced and all were shown to commute with this $\mathfrak{sl}(2)$ action.  Hence, in essence, all information of these cohomologies are encoded in their primitive components, $\{PH_{d+\dl}, PH_{d\dl}, PH_{\dpp}, PH_{\dpm}\}\,$, which can be defined purely on the space of primitive forms, $\CB^*(M)$.   In short, the cohomologies introduced in Paper I and II are truly just primitive cohomologies.

This may seem to suggest if one wants to study cohomologies of forms on symplectic manifolds that one should focus on the primitive forms and their cohomologies.  However, this certainly can not be the case as we know from explicit examples in \cite{TY1, TY2} that primitive cohomologies in general contain different information than the de Rham cohomology, and of course, the de Rham cohomology is defined on $\CA^*(M)$ which are generally non-primitive.  So one may wonder, besides the de Rham cohomology, are there any other {\it non-primitive} cohomologies of differential forms on $(M, \om)$?  

Another curiosity comes from the relations between the known primitive cohomologies.  In Table 1, we list the main primitive cohomologies that were studied in \cite{TY1, TY2}.  As arranged, the cohomologies listed above the top horizontal line are all associated with a single symplectic elliptic complex \cite{TY2}.  It would seem rather unnatural if somehow the other primitive cohomologies, between the two vertical dashed lines, do not also arise from some elliptic complexes.  For instance, what makes $\PHPL{n}$ so different from $\PHPL{n-1}$?  Certainly from their definitions in \cite{TY1}, the only difference is just the degree of the space of primitive forms $\CB^*(M)$ which they are defined on and nothing more.  But if they are not so different, then what other elliptic complexes are there on symplectic manifolds?  Would these new elliptic complexes involve non-primitive forms?

\begin{table}[t]
\begin{center}
\renewcommand{\tabcolsep}{.1cm}
\begin{tabular}{r :  c :  l }
$\PHDP{0}, \PHDP{1}, \ldots, \PHDP{n-1},$&  $\PHPR{n}$,  $\PHPL{n}$, &  $\PHDM{n-1}, \ldots, \PHDM{1}, \PHDM{0}$
\\ 
\hline
&$\PHPR{n-1}$,  $\PHPL{n-1}$& 
\\
& ~\vdots~~~~~~~~~  \vdots~ &
\\
&$\PHPR{1}$,  $\PHPL{1}$& 
\\
&$\PHPR{0}$, $\PHPL{0}$& 
\end{tabular}
\end{center}
 \caption{The primitive cohomologies introduced in Paper I and  II \cite{TY1, TY2} for  symplectic manifolds of dimension $2n$.
\label{tabcoh1}} 
\end{table}

\medskip

\noindent{\it (1) Filtered forms and symplectic elliptic complexes}

These questions concerning the existence of new non-primitive cohomologies and other elliptic complexes on symplectic manifolds turn out to be closely related.  For at the level of the differential forms, one can think of the primitive forms as the result of a projection operator, $\Pi^0: \CA^k(M) \to \CB^k(M)$, that projects any form to its primitive component and thereby discarding all terms of order $\om$ and higher.  Generalizing this, we can introduce the projection operator, $\pil{p}$, for $0 \leq p \leq n$, that keeps terms up to the $\om^p$-th order of the Lefschetz decomposition in \eqref{ALdec}:
\begin{align*}
A_k & = B_k + \om \w B_{k-2} + \om^2 \w B_{k-4} + \om^3 \w B_{k-6} + \ldots  ~,\\
\pil{0}  A_k &= B_k ~,\\
\pil{1} A_k &= B_k + \om \w B_{k-2}~,\\
 &  ~\vdots  \\
\pil{p} A_k &= B_k + \om \w B_{k-2} + \om^2 \w B_{k-4}+ \ldots + \om^p \w B_{k-2p}~,\\
 & ~  \vdots ~
\end{align*}
We shall use the notation $\FO{p}{*}$ to denote the projected space of $\pil{p}\CA^*\subset \CA^*$ and call it the space of $p$-filtered forms.  We shall also call the index $p$ the filtration degree as it parametrizes a natural filtration: 
\begin{align*}
\CB^* = \FO{0}{*} \subset \FO{1}{*} \subset \FO{2}{*} \subset \ldots \subset \FO{n}{*} = \CA^*~.
\end{align*}
Notice that the zero-filtered forms are precisely the primitive forms, i.e. $\CB^* = \FO{0}{*}$, and the $n$-filtered forms are just $\CA^*$.  In this way, increasing the filtration degree from $p=0$ to $p=n$ allows us to interpolate from $\CB^*$ to $\CA^*\,$. 

The introduction of filtered forms turns out to be a fruitful enterprise.  For one, it allows us to generalize the symplectic elliptic complex for primitive forms to obtain an elliptic complex of $p$-filtered forms of any fixed filtration degree $p$.  Specifically, we shall show in Theorem \ref{thrmecomplex} that the following complex is elliptic:  
\begin{align} \label{filterecomplex} \xymatrix{
0\; \ar[r] &\; \FO{p}{0} \; \ar[r]^-{\ddp} & \; \FO{p}{1} \; \ar[r]^-{\ddp} & ~ \ldots ~ \ar[r]^-{\ddp} & \; \FO{p}{n+p-1} \; \ar[r]^-{\ddp} & \; \FO{p}{n+p} \ar[d]^{\dpp\dpm} \\
0\; &\; \FO{p}{0} \; \ar[l] & \; \FO{p}{1} \; \ar[l]_-{\ddm} & ~ \ldots ~ \ar[l]_-{\ddm} & \; \FO{p}{n+p-1} \; \ar[l]_-{\ddm} & \; \FO{p}{n+p} \ar[l]_{\ddm}
} \end{align}
The three differential operators -- two first-order differential operators $\{\ddp, \ddm\}$ and a second-order differential operator $\dpp\dpm$ -- appearing in this complex will be defined in Section \ref{sec_pre}.   What is important here is that associated with the above elliptic complex  are {\it filtered} cohomologies defined on the space of $p$-filtered forms, $\FO{p}{*}$.   We shall denote these cohomologies by $F^pH$.  Let us note that the elliptic complex in \eqref{filterecomplex} has two levels: a top level associated with the ``$+$" operator $\ddp$ and a bottom one associated with the ``$-$" operator $\ddm$.  Hence, it is natural to notationally split the cohomologies within each grouping of $F^pH$ into two as follows:
\begin{align*}
F^pH&=\left\{\FHP{p}{}, \FHM{p}{} \right\}\\
&=\left\{\left(\FHP{p}{0}, \ldots, \FHP{p}{n+p-1}, \FHP{p}{n+p}\right), \left(\FHM{p}{n+p}, \FHM{p}{n+p-1}, \ldots, \FHM{p}{0}\right)\right\}~.
\end{align*}
Of particular interest, we point out the isomorphisms  $\FHP{p}{n+p}\! \cong\! \PHPR{n-p}$ and $\FHM{p}{n+p}\! \cong \!\PHPL{n-p}$.  Thus, Table 1 can now be filled-in precisely by the filtered cohomologies as seen in Table 2.

\begin{table}[t]
\begin{center}
\renewcommand{\tabcolsep}{.1cm}
\begin{tabular}{c | c c c c |  c c c c }
~~~~$F^*H$~~~~ &    &  &\!\!\!\!\!\!\!\!\!\!\!\!\!\!$\FHP{*}{*}$&   & & $\FHM{*}{*}$\!\!\!\!\!\!\!\!\!\! & & \\
\hline
$F^0H$ & $\PHDP{0},$& $\ldots~,$ &$\PHDP{n-1},$ & $\PHPR{n},$ & $ \PHPL{n},$ &  $\PHDM{n-1},$ &  $\ldots~,$ &  $\PHDM{0}$
\\ 
\hline
$F^1H$ &  $\FHP{1}{0},$ & $ \ldots~,$ & $\FHP{1}{n},$ & $\PHPR{n-1},$&  $\PHPL{n-1},$& $\FHM{1}{n},$& $ \ldots~,$ & $\FHM{1}{0}$
\\
\hline
\vdots & & &\!\!\!\!\!\!\!\!\!\!\!\!\!\!\vdots& & & \vdots\!\!\!\!\!\!\!\!\!\!
\\
\hline
$F^pH$ &  $\FHP{p}{0},$ & $ \ldots~,$ & \!\!$F^p\!H_{+}^{n+p-1}\!\!\!\!,$ & $\PHPR{n-p},$&  $\PHPL{n-p},$& \!\!$F^p\!H_{-}^{n+p-1}\!\!\!\!,$& $ \ldots~,$ & $\FHM{p}{0}$
\\
\hline
\vdots & & & \!\!\!\!\!\!\!\!\!\!\!\!\!\!\vdots& & & \vdots\!\!\!\!\!\!\!\!\!\!
\end{tabular}
\end{center}
 \caption{The filtered cohomologies $F^pH=\left\{\FHP{p}{}, \FHM{p}{} \right\}$ with $0\leq p \leq n$ with isomorphisms  $\FHP{p}{n+p}\! \cong\! \PHPR{n-p}$ and $\FHM{p}{n+p}\! \cong \!\PHPL{n-p}$.
\label{tabcoh2}} 
\end{table} 
 
Having introduced filtered cohomologies, it may seem that we have now a full-blown array of cohomologies arranged together by the filtration degree $p$ in $F^pH$.  But why so many?  And what information do these cohomologies contain?  It turns out the answers are directly related to Lefschetz maps, which are fundamental algebraic operations in symplectic geometry.  Let us turn to describe them now. 

\medskip

\noindent{\it (2) Cohomologies and Lefschetz maps}

For any symplectic manifold, there is a most distinguished set of elements of the de Rham cohomology consisting of the symplectic form and its powers, $\{\om, \om^2, \ldots , \om^n\}\,$.  As the de Rham cohomology has a product structure given by the wedge product, it is natural to focus in on the product of $\om^r \in \HD{2r}(M)$, for $r=1, \ldots, n$, with other elements of the de Rham cohomology, i.e. $\om^r \otimes \HD{k}(M)$.   Such a product by $\om^r$ can be considered as a map, taking an element of $\HD{k}(M)$ into an element of $\HD{k+2r}(M)$.  This action is  referred to as the Lefschetz map (of degree $r$):
\begin{align}\begin{split}\label{L1}
L^r:& ~~\HD{k}(M) ~~\to ~~\HD{k+2r}(M)~,\\
& ~~~~ [A_k] ~\,\quad \to~~~ [\om^r \w A_k]~,
\end{split}
\end{align}
where $[A_k] \in \HD{k}(M)$.  
Clearly, Lefschetz maps are linear and only depend on the cohomology class of $[\om^r] \in \HD{2r}(M)$.  But importantly, Lefschetz maps are in general neither injective nor surjective.  So a basic question one can ask for any symplectic manifold is what are the kernels and cokernels of the Lefschetz maps? 

In the special case in which the symplectic manifold is K\"ahler, this question is directly answered by the well-known Hard Lefschetz theorem.  But for a generic symplectic manifold, the Hard Lefschetz theorem does not hold.  We can nevertheless address this question in full generality by first analyzing the Lefschetz action on differential forms.  Indeed, the degree one Lefschetz map, $L$, is one of the three $\mathfrak{sl}(2)$ generators that lead to the Lefschetz decomposition of differential forms.   We will show in Section \ref{sec_pre} that the information of this Lefschetz decomposition can be neatly re-packaged in terms of a series of short exact sequences of differential forms involving Lefschetz maps.  Though these exact sequences do not naturally fit into a single short exact sequence of chain complexes, we will prove in Section \ref{sec_Lef} that they do nevertheless give a long exact sequence of cohomologies involving the Lefschetz maps.  

These long exact sequences of cohomologies turn out to contain precisely the data of the kernels and cokernels of the Lefschetz maps.   As we will show in Section \ref{sec_Lef}, for Lefschetz maps of degree $r$, there is a two-sided resolution that involves precisely the $(r-1)$-filtered cohomologies, $F^{r-1}H\,$.  (For the $r=1$ case, this result concerning the primitive cohomologies was also found independently by M. Eastwood via a different method \cite{Eastwood2}).  We can in fact encapsulate the resolution of the degree $r$ Lefschetz map in a simple, elegant, exact triangle diagram of cohomologies:
\begin{align}
\xymatrix{
 &\; F^{r-1}H^*(M) \ar[ld] &  \\
 \HD{*}(M) \ar[rr]^-{L^r} & & \HD{*}(M)\ar[lu]}\label{Tp}
\end{align}
For example, in dimension $2n=4$, the triangle for $r=1$ represents the following long exact sequence:
\begin{align*} 
\xymatrix
@R-6pt
@H-10pt
{&\;0 \ar[r] & \;\HD{1}(M) \ar[r] &\;\PHDP{1}(M) \ar`[d]`[l]`[dlll]`[dll][dll]
\\
&\;\HD{0}(M) \ar[r]^-{L}  &\;\HD{2}(M) \ar[r]  &\;\PHPR{2}(M) \ar`[d]`[l]`[dlll]`[dll][dll]
\\
&\;\HD{1}(M) \ar[r]^-{L}   &\;\HD{3}(M) \ar[r] &\;\PHPL{2}(M)\ar`[d]`[l]`[dlll]`[dll][dll]
\\
&\;\HD{2}(M) \ar[r]^-{L}  &\;\HD{4}(M) \ar[r]&\;\PHDM{1}(M) \ar`[d]`[l]`[dlll]`[dll][dll]
\\
&\;\HD{3}(M) \ar[r] & \; 0 &
}
\end{align*}
Since the information of the long exact sequence at each element can be written as a split short exact sequence of kernel and cokernel of maps, we immediately find from the above exact sequence for example in four dimensions that
\begin{align*}
\PHPR{2}(M) &\cong \cok[L: \HD{0}(M) \to \HD{2}(M)] \oplus \ker[L: \HD{1}(M) \to \HD{3}(M)]~,\\
\PHPL{2}(M)&\cong \cok[L: \HD{1}(M) \to \HD{3}(M)] \oplus \ker[L: \HD{2}(M) \to \HD{4}(M)]~.
\end{align*}

In general, the triangle \eqref{Tp} and its associated long exact sequence implies that the $(r-1)$-filtered cohomologies $F^{r-1}H^*(M)$ are isomorphic to the direct sum of kernels and cokernels of the Lefschetz maps of degree $r$.  More explicitly, we have the following isomorphisms:
\begin{align*}
\FHP{r-1}{k}(M) &\!\cong\! \cok\!\big[L^{r}\!: \!\HD{k-2r}(M)\to \HD{k}(M)\big] \oplus \ker\big[L^{r}\!: \!\HD{k-2r+1}(M) \to \HD{k+1}(M)\big] \,,\\
\FHM{r-1}{k}(M) &\!\cong\! \cok\!\big[L^{r}\!\!:\! \HD{2n-k-1}\!(M)\! \to \!\HD{2n-k+2r-1}\!(M)\big]\! \oplus\! \ker\!\big[L^{r}\!\!:\! \HD{2n-k}\!(M)\! \to\! \HD{2n-k+2r}\!(M)\big],
\end{align*}
for $0\leq k \leq n+r-1$.

In fact, we can also think of the Lefschetz map triangle \eqref{Tp} as a special case of the exact triangle relating filtered cohomologies:
\begin{align}
\xymatrix{
 &\; F^{r-1}H^*(M) \ar[ld] &  \\
 F^{l}H^*(M) \ar[rr] & & F^{l+r}H^*(M)\ar[lu]}\label{Tpf}
\end{align}
as we will also show in Section \ref{sec_Lef}.

\medskip

\noindent{\it (3) Filtered cohomology rings and their underlying $A_\infty$-algebras}

It is indeed rather interesting that the filtered cohomologies $F^pH$, defined differentially by the elliptic complex \eqref{filterecomplex}, are isomorphic via the exact triangle \eqref{Tp} with the kernels and cokernels of the Lefschetz maps, which are purely algebraic quantities.  Considering the exact triangle \eqref{Tp}, it is further tempting to think that the filtered cohomologies may have similar algebraic properties to the two de Rham cohomologies that accompany it.  For instance, could the $p$-filtered cohomology group $F^pH$ actually form a cohomology ring?  Ideally, to consider this question, one would like to have a grading for $p$-filtered forms and introduce a product operation that preserves the grading.  But what grading should one use for $p$-filtered forms?  This is not at all immediate as each filtered space $\FO{p}{k}$ for $0\leq k\leq n+p$ noteworthily appears twice in the elliptic complex of \eqref{filterecomplex}?  To settle on a grading, we can appeal to the analogy with the de Rham complex, and heuristically, just ``bend" the elliptic complex of \eqref{filterecomplex} and rearrange it into a single line
\begin{align*}
\xymatrix{
0\; \ar[r] &\; \FO{p}{0} \; \ar[r]^-{\ddp} & ~ \ldots ~ \ar[r]^-{\ddp}& \; \FO{p}{n+p} \; \ar[r]^-{\dpp\dpm} & \; \BFO{p}{n+p} \ar[r]^-{\ddm} & ~ \ldots ~ \ar[r]^-{\ddm} & \; \BFO{p}{0} \; \ar[r]^-{\ddm} &\;0 } 
\end{align*}
where we have used a bar, $\BFO{p}{*}$, to distinguish those $\FO{p}{*}$ associated with the bottom level of the elliptic complex.  Writing the complex in this form,  we can construct a new algebra 
$$\Fp=\{\FO{p}{0}, \FO{p}{1}, \ldots, \FO{p}{n+p}, \BFO{p}{n+p}, \dots, \BFO{p}{1}, \BFO{p}{0}\}$$
with elements $\Fp^j$, for $0\leq j \leq 2n+2p+1$, given by
\begin{equation*}\Fp^j =\begin{cases}
\FO{p}{j} &  \text{ if } 0\leq j\leq n+p ~,\\
\BFO{p}{2n+2p+1- j} &  \text{ if } n+p+1\leq j \leq 2n+2p +1 ~.
\end{cases}
\end{equation*}
Following closely the elliptic complex, we define the differential $d_j: \Fp^j \to \Fp^{j+1}$ to be
\begin{align}\label{diffdef}
d_j= \begin{cases}
~\ddp &  \text{if~~ } 0\leq j < n+p-1~,\\
-\dpp\dpm & \text{if ~~} j = n+p~,\\
-\ddm & \text{if~~ } n+p+1 \leq j \leq 2n + 2p +1 ~.
\end{cases}
\end{align}
One can then try to construct a multiplication which preserves the grading
$$\Fp^j \pp \Fp^k \to \Fp^{j+k}$$
and is graded commutative, i.e. $\Fp^{j} \pp \Fp^{k} = (-1)^{jk} \Fp^{k} \pp \Fp^{j}$.  In fact, as we will describe in Section \ref{sec_a_infinity}, such a multiplication operation $\pp$ can indeed be constructed based on the exact triangle \eqref{Tp}.  (See Definition \ref{defprod}.) This multiplication on forms is rather novel in that it involves first-order derivative operators.  The presence of these derivatives turn out to be important as they allow us to prove the Leibniz rule:
\begin{align*}
d_{j+k} (\Fp^{j} \pp \Fp^{k}) = (d_j \Fp^{j}) \pp \Fp^{k} + \moj \Fp^j \pp (d_k \Fp^k)~.
\end{align*}
This Leibniz rule represents a rather subtle balancing between the definition of the differential $d_j$ and the product $\pp$.   However, one of the consequences of having derivatives in the definition of a multiplication is that the product $\pp$ generally is non-associative.  This means that the algebra $(\Fp, d_j, \pp)$ can not be a differential graded algebra as in the de Rham complex case.  Nevertheless, as we will show in Section \ref{sec_a_infinity},  the non-associativity of the $p$-filtered algebra $\Fp$ can be captured by a trilinear map $m_3$.  Together, $(\Fp, d_j, \pp, m_3)$ turns out to fit precisely the $\Ain$-algebra structure, with the higher $k$-linear maps, $m_k$, for $k\geq 4$ taken to be zero.   And as an immediate corollary of satisfying the requirements of an $\Ain$-algebra, the cohomology $F^pH=H(\Fp)$ indeed has a ring structure.

The existence of the ring structure of $F^pH$ provides a new set of invariants for distinguishing different symplectic manifolds.  We will show in Section \ref{example_product}, how the product structure can be different for two symplectic four-manifolds that are both a product of a circle times a fibered three-manifold.  We give an example of a pair of such symplectic manifolds that have isomorphic de Rham cohomology ring and identical filtered cohomology dimensions, but with different filtered product structure.  

\medskip

\noindent{\it Acknowledgements.~} 
We would like to thank M. Abouzaid, V. Baranovsky, V. Guillemin, N. C. Leung, T.-J. Li, B. Lian, T. Pantev, R. Stern, C. Taubes, C.-L. Terng, S. Vidussi, L. Wang, and B. Wu for helpful comments and discussions.  The first author is supported in part by Taiwan NSC grant 102-2115-M-002-014-MY2.  The third author is supported in part by NSF grants 1159412, 1306313 and 1308244.


\section{Preliminaries}\label{sec_pre}

We here present some of the properties of differential forms and differential operators on symplectic manifolds that will be relevant for our analysis in later sections.   We begin first by describing the $\mathfrak{sl}(2)$ action and other natural operations on differential forms.  We then introduce the filtered forms and discuss the linear differential operators that act on them.  We then use these filtered forms to write down short exact sequences involving Lefschetz maps. 

\subsection{Operations on differential forms}

On a symplectic manifold $(M^{2n},\om)$, the presence of a non-degenerate two-form, $\om$, allows for the decomposition of differential forms into representation modules of an $\mathfrak{sl}(2)$ Lie algebra which has the following generators, 
\begin{align}\label{def_sl2}\begin{split}
L &: ~~ A \rightarrow \om \w A~,  \\
\La &:~~ A \rightarrow \frac{1}{2}(\om^{-1})^{ij}\,\iota_{\pa_{x^i}} \iota_{\pa_{x^j}} A~,  \\
H &:~~ A \rightarrow (n-k)\, A  \qquad {{\rm for~}} A\in \CA^k(M)~,
\end{split}\end{align}
and commutation relations,
\begin{align}\label{sl2com}
[H,\La] = 2\La\,, \quad [H, L] = -2 L\,,  \quad [\La, L] = H\,.
\end{align}
Here, $L$ is called the Lefschetz operator, and is just the operation of wedging a form with $\om$.  The operator $\La$ represents the action of the associated Poisson bivector field.    
The ``highest weight" forms are the primitive forms, which are denoted by $B_s\in\CB^s(M)$.  The primitive forms are characterized by the following condition:
\begin{align}\label{primcond}
(\text{primitivity condition})\qquad\qquad \La\, B_s = 0\,, \quad \text{or equivalently},\quad L^{n+1-s}\, B_s =0\,.
\end{align}
An $\mathfrak{sl}(2)$ representation module then consists of the set
$$\left\{B_s\,, ~\om \w B_s\,, ~\om^2 \w B_s\,,~ \ldots \, ,~ \om^{n-s}\w B_s \right\}~,$$
where each basis element can be labeled by the pair $(r,s)$ with
$$\CL^{r,s}(M)= L^r \CB^s(M) = \left\{A \in \CA^{2r+s}(M){\big \arrowvert} A= \om^r \w B_s\, ~{\rm and}~~ \La\, B_s =0 \right\}\,.$$
Indeed, the $\mathfrak{sl}(2)$ decomposition of $\CA^*(M)$ can be simply pictured by an $(r,s)$-pyramid diagram \cite{TY2} as for example drawn in Figure \ref{sympr} for dimension $2n=8$.
\begin{figure}
$$\begin{matrix}
~\;\CA^0 & ~\;\CA^1 & \;\CA^2 & \;\CA^3 & \CA^4 & \CA^5 & \CA^6 & \CA^7 & \CA^8 
\\
\hline\\
 & & & & \CB^4& & & & \\
 & & &\CB^3& &\!\!L \CB^{3} & & & \\
 & &\CB^2& & L\CB^{2}& &\!\!\!L^2 \CB^{2} & & \\
 & \CB^1 & & L \CB^{1} & &\!\!L^2 \CB^{1}& &\!\!\!L^3\CB^{1} & \\
\CB^0& &L \CB^{0}& &\!\!L^2 \CB^{0}& &\!\!\!L^3\CB^{0}& &\!\!\!L^4\CB^{0}\\
\end{matrix}$$
\caption{The decomposition of differential forms into an $(r,s)$-pyramid diagram in dimension $2n=8$.   The degree of the forms starts from zero (on the left) to $2n=8$ (on the right).}
\label{sympr}
\end{figure}

We now introduce some natural operations on differential forms that will be useful for the discussion to follow.  

First, note that there is an obvious reflection symmetry about the central axis of the $(r,s)$-pyramid diagram as in Figure \ref{sympr}.  This central axis lies on forms of middle degree, $\CA^n$.  An example of an operator that reflects forms is the standard symplectic star operator $\sstar\,$, which can be defined by the Weil's relation \cite{Weil,Guillemin}
\begin{align} \label{ssrel}
\sstar \;\frac{1}{r!}\, L^r\, B_s = (-1)^{s(s+1)/2}\, \frac{1}{(n-r-s)!}\, L^{n-r-s}\, B_s~.
\end{align}
where $B_s \in \CB^s$.  The minus sign and combinatorial factors in \eqref{ssrel} however can become rather cumbersome for calculations.  So for simplicity, we introduce another reflection operator, denoted as $\rstar\,$, and defined it simply as
\begin{align} \label{rsrel}
\rstar \, (L^r\, B_s) =  L^{n-r-s}\, B_s~.
\end{align}
It is easy to check as expected that 
$$(\rstar)^2 = \bo ~.$$ 

Second, we can broaden the definition of the Lefschetz operator $L^r$ to allow for negative integer powers, i.e. $r<0\,$.  For a differential $k$-form $A_k\in\CA^k\,$, consider its Lefschetz decomposition
\begin{align}\label{LLdec}
A_k= B_k +  L B_{k-2} + \ldots + L^p B_{k-2p} + L^{p+1} B_{k-2p-2} + \ldots ~.
\end{align}
The map $L^{-p}: \CA^k \to \CA^{k-2p}$ for $p>0$ is then defined to be
\begin{align}\label{LLdec1}
L^{-p} A_k = B_{k-2p} + L B_{k-2p-2} + \ldots ~.
\end{align}
Notice that this action of $L^{-p}$ is similar to $\La^p$ in that both lower the degree of a differential form by $2p$; however, they are \emph{not} identical.  This can be easily seen by noting that by definition, $L^{-1}(L\,B_s) = B_s$, while $\La (L\,B_s) = H B_s$, using the third $\mathfrak{sl}(2)$ commutation relation in \eqref{sl2com}.  Defining $L$ raised to a negative power by \eqref{LLdec1} is useful in that it allows us to express the reflection $\rstar$ operator simply as
\begin{align}\label{rsak}
\rstar\,  A_k = L^{n-k} A_k~,
\end{align}
for $k$ arbitrary.   In fact, $L^{-p}$ can be heuristically thought of as the $\rstar$ adjoint of $L^p$: 
\begin{align}\label{rsL}
L^{-p} = \rstar \, L^{p} \, \rstar
\end{align}
which can be checked straightforwardly using \eqref{rsak}.   This relation \eqref{rsL} indeed is analogous to the standard adjoint relation, $\La = \sstar\, L\, \sstar\,$  \cite {Yan}.

Comparing \eqref{LLdec1} with  \eqref{LLdec}, it is clear that the operator $L^{-p}$ completely removes the Lefschetz components of a form that have powers of $\om$ less than $p$.  This suggests it may be useful to introduce an operator that projects onto the Lefschetz components with powers of $\om$ bounded by some integer.  Thus, we define a projection operator, $\pip: \CA^k \to \CA^k$ for $p>0$ which acts on the Lefschetz decomposed form of \eqref{LLdec} as
\begin{align}\label{pipdef}
\pil{p} A_k = B_k +  L B_{k-2} + \ldots + L^p B_{k-2p}~.
\end{align}
In other words, it projects out components with higher powers of $\om$, i.e. $(L^{p+1}B_{k-2p-2}+ \ldots )$.   With such a projection operator, we can express any differential form as
\begin{align*}
A_k = \pil{p} A_k + L^{p+1} (L^{-(p+1)} A_k)~,
\end{align*}
which simply implies 
\begin{align}\label{relation1}
\bo = \pip +  L^{p+1} L^{-(p+1)} ~.
\end{align} 
Written in this form, it is clear that $L^{p+1} L^{-(p+1)}$ is also a projection operator and we can alternatively define the projection operator as $\pip = \bo - L^{p+1} L^{-(p+1)}\,$.

As will be useful later, we can take the $\rstar$ adjoint of \eqref{relation1} and obtain the following:
\begin{align}\label{relation2}
\bo = \pips + \lpo L^{p+1}~,
\end{align}
where  $\pips = \rstar \pip \rstar$.  Note that both $\pips$ and $\lpo L^{p+1}$ are also projection operators.

Regarding differential operators, we first point out that the exterior derivative operator, $d$, has a natural decomposition into two linear differential operators from the above $\mathfrak{sl}(2)$ or Lefschetz decomposition \cite{TY2}:
\begin{align}
d &= \dpp +  L \, \dpm \label{dcomp}
\end{align}
where $\dppm: \CL^{r,s} \to \CL^{r,s\pm 1}$.  The differential operators $(\dpp,\dpm)$ have the desirable properties that
\begin{align*}
(\dpp)^2=(\dpm)^2 =0~, \qquad L\,(\dpp\dpm + \dpm\dpp) = 0~,\qquad [L, \dpp]=[L, L\,\dpm]=0~.
\end{align*}

Another useful operator is the symplectic adjoint of the exterior derivative \cite{EH,Brylinski} 
\begin{align}
\dl :&= d\,\La - \La\, d \notag\\
&= (-1)^{k+1}\,\sstar\,d\,\sstar \label{dldef}
\end{align}
where the second relation is defined acting on a differential $k$-form.  Analogous to $\dl$, we shall introduce the $\rstar$ adjoint operator defined to be 
\begin{align}\label{dfddm}
\ddm : =  \rstar \, d\, \rstar~.
\end{align}
It follows trivially from $(\rstar)^2 = \bo$ that $\ddm\, \ddm =0\,$.  In fact, it is can be straightforwardly checked for any $\CL^{r,s}(M)$ space that 
\begin{align}\label{dmsimp}
\ddm = \dpm + \dpp \,L^{-1}~.
\end{align}

\subsection{Filtered forms and differential operators}

The Lefschetz decomposition is suggestive of a natural filtration for differential forms on $(M^{2n} ,\om)$.  The space of differential $k$-forms $\CA^k$ has the following Lefschetz decomposition:
\begin{align} \label{sldec}
\CA^k = \bigoplus_{\max(0,k-n)\,\leq \,r \,\leq\, \lfloor{k/2}\rfloor} \CL^{r,k-2r} ~, 
\end{align}
where $\lfloor\ \rfloor$ denotes rounding down to the nearest integer.   Applying the projection $\pil{p}$, which caps the sum over the index $r$ to some fixed integer $p$, we define the $p$-filtered forms of degree $k$ as  
\begin{align} \FO{p}{k} = \;\pil{p}\CA^k= \bigoplus_{\max(0,k-n)\leq \,r \, \leq \min(p,\lfloor{k/2}\rfloor)} \CL^{r,k-2r} ~. \end{align}
We call $p$ the filtration degree which can range from $0$ to $n\,$. 
Let us note the two special cases of $\FO{p}{k}$: (i)  $p=0$ consists of primitive $k$-forms, $\FO{0}{k}= \CB^k$; (ii) $p= \lfloor{k/2}\rfloor$ consists of all differential $k$-forms, $\FO{\lfloor{k/2}\rfloor}{k}=\CA^k\,$.  Clearly then, 
\begin{align*}
\CB^k = \FO{0}{k} \subset \FO{1}{k} \subset \FO{2}{k} \subset \ldots \subset \FO{\lfloor{k/2}\rfloor}{k} = \CA^k~.
\end{align*}

Now, consider the elements of $\FO{p}{*}$ for a fixed filtration number $p$.  In this case, the degree $k$ in $\FO{p}{k}$ has the range $0\leq k \leq n+p$.  For $p\geq 1$, we have
\begin{align}
\FO{p}{k}& = ~\CA^k~,\qquad\qquad\qquad\qquad\qquad\qquad{\text{for~~}} 0 \leq k \leq 2p+1~,\label{klep}\\
 \vdots \qquad &  \qquad \vdots \notag\\
\FO{p}{n+p-1} &= L^{p-1}\CB^{n-p+1}\,\oplus\,L^p\CB^{n-p-1}~,\label{kpmo}\\
\FO{p}{n+p} & = L^{p} \CB^{n-p}~.\label{kpnp}
\end{align}
Hence, for $k$ sufficiently small, $\FO{p}{k}$ contains all differential $k$-forms.  On the other hand, for the largest value $k=n+p$, $\FO{p}{n+p}$ has only one component of the Lefschetz decomposition of \eqref{sldec} and is isomorphic to $\CB^{n-p}\,$.  The example of $\FO{2}{*}$ in dimension $2n=8$ is illustrated in Figure \ref{fthree}.

\begin{figure}
$$\begin{matrix}
~\;\CA^0 & ~\;\CA^1 & \;\CA^2 & \;\CA^3 & \CA^4 & \CA^5 & \CA^6 & \CA^7 & \CA^8 
\\
\hline\\
 & & & & \CB^4& & & & \\
 & & &\CB^3& &\!L \CB^{3} & & & \\
 & &\CB^2& & L\CB^{2}& &\!\!L^2 \CB^{2} & & \\
 & \CB^1 & & L \CB^{1} & &\!\!L^2 \CB^{1}& & {~\varnothing~} & \\
\CB^0& &L \CB^{0}& &L^2 \CB^{0}& & {~\varnothing~ }& & {~\varnothing~ }\\
\end{matrix}$$
\caption{The forms of $\FO{2}{k}$ with $0\leq k\leq 6\,$ decomposed in an $(r,s)$ pyramid diagram in dimension $2n=8$.   Notice that $\FO{2}{k} = \CA^k\,$, for $ k \leq 5\,$. }
\label{fthree}
\end{figure}

Alternatively, we can also define the filtered form based on loosening the primitivity condition of \eqref{primcond}.
\begin{defn}\label{pfcond}
A differential $k$-form $A_k$ with $k\leq n+p$ is called $p$-filtered, i.e. $A_k \in \FO{p}{k}\,$, if it satisfies the two equivalent conditions: (i) $ \La^{p+1} A_k = 0\,$; 
(ii) $ L^{n+p+1-k} A_k = 0\,$. 
\end{defn}
By equation \eqref{rsak}, the $p$-filtered condition can be equivalently expressed as
\begin{align} \label{pistarf}
L^{p+1}\, \rstar \, A_k =0~.
\end{align}
for $A_k\in \FO{p}{k}\,$.

Turning now to the differential operators that act within the filtered spaces, the composition of the projection operator with the exterior derivative induces the differential operator 
\begin{align*}
\ddp: \FO{p}{k} \stackrel{d}{\longrightarrow} \Omega^{k+1} \stackrel{\pil{p}}{\longrightarrow} 
\FO{p}{k+1}~.
\end{align*}
The projection operator $\pil{p}$ effectively drops the $L\dpm$ action on the $\CL^{p,k-2p}$ component.   Explicitly, letting $A_k\in \FO{p}{k}$, we can write
\begin{align*}
\ddp A_k &= \ddp \left(B_k + L B_{k-2} + \ldots + L^p B_{k-2p}\right) \\
& = d \left(B_k + \ldots + L^{p-1} B_{k-2p+2}\right) + L^p \dpp B_{k-2p}
\end{align*}
where in the second line, we have applied \eqref{dcomp} on the $L^p\,d B_{k-2p}$ term and projected out the resulting term $L^{p+1} \dpm B_{k-2p}\,$.   Now, applying $\ddp$ again, we find
\begin{align*}
\ddp(\ddp A_k)  = d^2\left( B_k + \ldots + L^{p-1} B_{k-2p+2}\right) + L^p (\dpp)^2 B_{k-2p} = 0~,
\end{align*}
hence, we have shown that $(\ddp)^2 =0\,$.  We remark that with \eqref{klep}, $d_+$ is just the exterior derivative $d$ when $k\leq 2p$.    

Now we will also be interested in the action of the $\ddm$ operator \eqref{dfddm} on filtered forms.  Indeed, acting on $\FO{p}{*}$, it preserves the filtration number and decreases the degree by one: 
$$ \ddm: \FO{p}{k} {\longrightarrow} \FO{p}{k-1}
~. $$
The convention for the $\pm$ signs for $\ddp$ and $\ddm$ indicates that the differential operator raises and lowers the degree of differential forms by one, just like the notation for $(\dpp, \dpm)$. 

What follows is a formula involving the relation of the operators $\ddm$, $\pip$, and $\rstar$.  It will be helpful for calculations in later sections.
\begin{lem}\label{Leibniz02}
For $A_k \in \CA^k$,
\begin{align}\label{Lei020}
\pil{p}\, \rstar\, (dA_k) = \ddm \left(\pil{p}\, \rstar\,  A_k\right) + \pprs d \, \lpo(\opo \w A_k) 
\end{align}
\end{lem}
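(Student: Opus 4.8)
The strategy is to reduce the claimed identity \eqref{Lei020} to the decomposition $d = \dpp + L\dpm$ of \eqref{dcomp} together with the defining relation $\bo = \pip + L^{p+1}L^{-(p+1)}$ of \eqref{relation1} and its $\rstar$-adjoint \eqref{relation2}. The key observation is that $\ddm = \rstar d\, \rstar$ by \eqref{dfddm}, so I want to compare $\pil{p}\rstar d A_k$ with $\pil{p}\rstar d\,\rstar(\rstar A_k)$; the two differ precisely because $\rstar$ does not commute with $\pil{p}$, and because applying $\rstar$ twice in the middle inserts the identity only on the nose, not after projection. Concretely, first I would write $\ddm(\pil{p}\rstar A_k) = \rstar\, d\, \rstar\, \pil{p}\, \rstar\, A_k$, and then use \eqref{relation2} in the form $\rstar\pil{p}\rstar = \pips$ composed appropriately, or more directly insert $\bo = \pips + \lpo L^{p+1}$ to replace $\rstar\pil{p}\rstar$ acting on $\rstar A_k$ by $\bo$ minus a correction term. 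This isolates the discrepancy between $\ddm(\pil{p}\rstar A_k)$ and $\pil{p}\rstar\, d A_k$ as $-\rstar\, d\,(\lpo L^{p+1}\rstar A_k)$ up to a projection, and the task becomes identifying this with the stated correction $\pprs d\, \lpo(\opo\w A_k)$.

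The main steps, in order, are: (1) expand $\ddm(\pil{p}\rstar A_k) = \rstar d\,(\rstar\pil{p}\rstar)(\rstar A_k)$ and use $\rstar\pil{p}\rstar = \pips$ from the line after \eqref{relation2}; (2) apply \eqref{relation2}, $\bo = \pips + \lpo L^{p+1}$, to rewrite $\pips\,(\rstar A_k) = \rstar A_k - \lpo L^{p+1}\rstar A_k$; (3) substitute, so that $\ddm(\pil{p}\rstar A_k) = \rstar d\, \rstar A_k - \rstar d\,(\lpo L^{p+1}\rstar A_k)$, where the first term is exactly $\ddm A_k$ — but I actually want $\pil{p}\rstar d A_k$, so instead I should run the argument starting from $\pil{p}\rstar d A_k$ and insert $\bo = \pip + L^{p+1}L^{-(p+1)}$ via \eqref{relation1} right after $d$: namely $\pil{p}\rstar d A_k = \pil{p}\rstar d\,(\pip + L^{p+1}L^{-(p+1)})A_k$; (4) on the piece $\pil{p}\rstar d\,\pip A_k$, push $\rstar$ through using \eqref{rsak}, $\rstar = L^{n-k}$ on $k$-forms, and recognize it as $\ddm(\pil{p}\rstar A_k)$ after moving the outer $\pil{p}$ and $\rstar$ around — here the degree bookkeeping ($\ddm$ lowers degree by one, $\rstar$ reflects, $\pil{p}$ caps the filtration) must be checked carefully; (5) on the remaining piece $\pil{p}\rstar d\, L^{p+1}L^{-(p+1)}A_k$, commute $\rstar$ past the $L^{p+1}$ using \eqref{rsL}, $L^{-(p+1)} = \rstar L^{p+1}\rstar$, or directly use \eqref{rsak} to convert $\rstar L^{p+1}$ into a single power $L^{n - (\cdot)}$ acting on the appropriate degree, and match with $\pprs d\,\lpo(\opo\w A_k)$, noting $\opo\w A_k = L^{p+1}A_k$.

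The delicate point — and the step I expect to be the main obstacle — is the degree and sign bookkeeping in steps (4) and (5): $\rstar$ acts differently depending on the degree of the form it hits, so each time I commute $\rstar$ past $d$ or past a power of $L$, the exponent $n - (\text{degree})$ shifts, and I must track that $d A_k$ is a $(k+1)$-form while $L^{p+1}L^{-(p+1)}A_k$ sits in a shifted Lefschetz stratum. In particular verifying that $\pil{p}\rstar d\,\pip A_k$ equals $\ddm(\pil{p}\rstar A_k)$ exactly — not up to an extra projection error — requires knowing that $\rstar$ maps $\FO{p}{k}$ isomorphically onto the reflected filtered space and that $d$ followed by the appropriate projection is compatible with this reflection; this is essentially the content that $\ddm$ genuinely lowers the filtration-respecting degree by one, which was asserted just before the lemma. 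Once the reflection identities \eqref{rsak}, \eqref{rsL}, \eqref{relation1}, \eqref{relation2} are applied in the right order with correct exponents, the identity \eqref{Lei020} should fall out as a bookkeeping computation with no further input.
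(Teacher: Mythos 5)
There is a genuine gap, and it sits exactly where you flagged the ``delicate point'': steps (4) and (5) of your final plan are both false, because you insert the wrong resolution of the identity. Decomposing $A_k = \pip A_k + L^{p+1}\lpo A_k$ via \eqref{relation1} produces the correction term $\pprs d\,(L^{p+1}\lpo A_k)$, whereas the lemma's correction term is $\pprs d\,(\lpo L^{p+1}A_k)$. The operators $L^{p+1}\lpo$ and $\lpo L^{p+1}$ are \emph{different} projections on $\CA^k$: the first keeps the Lefschetz components $L^r B_{k-2r}$ with $r\geq p+1$, the second keeps those with $r\geq k-n+p+1$. Consequently neither of your two term-identifications holds. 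Concretely, in dimension $4$ with $p=0$ take $A_3=\om\w B_1$ for $B_1\in\CB^1$: then $\pil{0}A_3=0$ and $\om\w A_3=\om^2\w B_1=0$, so your ``main piece'' $\pil{0}\rstar d(\pil{0}A_3)$ and the lemma's correction term $\pil{0}\rstar d\,L^{-1}(\om\w A_3)$ both vanish, yet $\ddm(\pil{0}\rstar A_3)=\ddm B_1=\dpm B_1$, which is nonzero in general; your step (4) would force $\dpm B_1=0$. (The two errors cancel in the sum, which is why the decomposition itself is consistent, but the matching of individual pieces to the two terms on the right of \eqref{Lei020} fails.)

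The fix is precisely the route you started in steps (1)--(3) and then abandoned because of an $\rstar$-bookkeeping slip: you wrote $\ddm(\pil{p}\rstar A_k)=\rstar d\,(\rstar\pil{p}\rstar)(\rstar A_k)$, which has one $\rstar$ too many. Correctly grouped, $\ddm(\pil{p}\rstar A_k)=\rstar d\,\big((\rstar\pil{p}\rstar)A_k\big)=\rstar d\,(\pips A_k)$, and now \eqref{relation2} gives $\pips A_k=A_k-\lpo(\opo\w A_k)$, hence
\begin{align*}
\ddm\left(\pil{p}\rstar A_k\right)=\rstar\, d A_k-\rstar\, d \,\lpo(\opo\w A_k)~.
\end{align*}
Applying $\pil{p}$ to both sides and using that $\ddm$ maps $\FO{p}{*}$ to itself (so $\pil{p}$ fixes the left-hand side) yields \eqref{Lei020} at once. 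This is equivalent to the paper's own proof, which works on the unstarred side --- using $\ddm\pil{p}A_k=\pil{p}\ddm\pil{p}A_k$ together with $\pil{p}=\bo-L^{p+1}\lpo$, and then substituting $A_k\to\rstar A_k$ --- but either way the essential input is \eqref{relation2}, equivalently $\rstar\pil{p}\rstar=\pips$, and not \eqref{relation1}.
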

\begin{proof}
Since $\ddm: \FO{p}{k} \to \FO{p}{k-1}$ preserves the filtration degree,
\begin{align*}
\ddm \pil{p} A_k & = \pil{p} \ddm \pil{p} A_k \\
&=\pil{p} \ddm A_k - \pprs d \, \rstar \, \opo\lpo A_k ~.
\end{align*}
Replacing $A_k$ with $\rstar\, A_k$ and using \eqref{dfddm} and \eqref{rsL}, we obtain \eqref{Lei020}.
\end{proof}

\subsection{Short exact sequences}

The data of the $(r,s)$ pyramid diagram can be nicely repackaged in terms of short exact sequences.  For instance, from the pyramid diagram of Figure \ref{sympr}, it is not hard to see that the following sequences involving Lefschetz map of degree one are exact:
\begin{align*}
\xymatrix
@R-10pt
@H-10pt
{
\;0\ar[r] & \; \CA^{0} \ar[r]^-L & \; \CA^{2} \ar[r]^-{\pil{0}} &\; \CB^2 \ar[r] & \; 0\\
\;0\ar[r] & \; \CA^{1} \ar[r]^-L & \; \CA^{3} \ar[r]^-{\pil{0}} &\; \CB^3 \ar[r] & \; 0\\
\;0\ar[r] & \; \CA^{2} \ar[r]^-L & \; \CA^{4} \ar[r]^-{\pil{0}} &\; \CB^4 \ar[r] & \; 0 ~.\\
}
\end{align*}
At the middle of the pyramid, we have
\begin{align*}
\xymatrix
@R-10pt
@H-10pt
{
\;0\ar[r] & \; \CA^{3} \ar[r]^-L & \; \CA^{5} \ar[r] & \; 0~.\\
}
\end{align*}
For forms of degree four or greater, we can write
\begin{align*}
\xymatrix
@R-10pt
@H-10pt
{
\;0\ar[r] & \; \CB^{4} \ar[r]^-{\rstar} & \; \CA^{4} \ar[r]^-L &\; \CA^6 \ar[r] & \; 0\\
\;0\ar[r] & \; \CB^{3} \ar[r]^-{\rstar} & \; \CA^{5} \ar[r]^-L &\; \CA^7 \ar[r] & \; 0\\
\;0\ar[r] & \; \CB^{2} \ar[r]^-{\rstar} & \; \CA^{6} \ar[r]^-L &\; \CA^8 \ar[r] & \; 0~.\\
}
\end{align*}
Exact sequences involving higher degree Lefschetz maps $L^r$ can similarly be written using $\FO{r-1}{*}$.  Generally, we can arrange the short exact sequence of any filtration number together in a suggestive commutative diagram as follows.

\begin{lem}\label{ccomplex1}On a symplectic manifold $(M^{2n},\omega)$, there is the following commutative diagram of short exact sequences for $1\leq r<n$:
\begin{align}
\xymatrix
@R-9pt
@H+12 pt
{
& & &  \vdots \ar[d]_-d &  \vdots \ar[d]_-{\ddp} &\; \\
& & \;0\ar[r] & \; \CA^{2r-1} \ar[r]^-{\pil{r-1}} \ar[d]_-d & \; \FO{r-1}{2r-1} \ar[r] \ar[d]_-{\ddp} &\; 0  \\
&\;0\ar[r]  &\; \CA^0 \ar[r]^-{L^r} \ar[d]_-d & \; \CA^{2r} \ar[r]^-{\pil{r-1}} \ar[d]_-d  &\; \FO{r-1}{2r} \ar[r]  \ar[d]_-{\ddp}  &\; 0\\
& & \vdots \ar[d]_-d & \vdots  \ar[d]_-d & \vdots \ar[d]_-{\ddp} & \\
&\; 0 \ar[r] &\; \CA^{n-r-2}\ar[r]^-{L^r} \ar[d]_-d & \; \CA^{n+r-2} \ar[r]^-{\pil{r-1}} \ar[d]_-d &\;\FO{r-1}{n+r-2} \ar[r] \ar[d]_-{\ddp} &\;0\\
&\; 0 \ar[r] &\; \CA^{n-r-1}\ar[r]^-{L^r} \ar[d]_-d & \; \CA^{n+r-1} \ar[r]^-{\pil{r-1}} \ar[d]_-d &\; \FO{r-1}{n+r-1} \ar[r] &\; 0\\
&\; 0\ar[r] &\; \CA^{n-r} \ar[r]^-{L^r} \ar[d]_-d & \; \CA^{n+r} \ar[r] \ar[d]_-d &\; 0 &\\
0\ar[r] &\; \FO{r-1}{n+r-1} \ar[r]^-{\rstar} \ar[d]_-{\ddm}&\; \CA^{n-r+1} \ar[r]^-{L^r} \ar[d]_-d 
&\; \CA^{n+r+1}\ar[r] \ar[d]_-d &\; 0 &  \\
0\ar[r] &\; \FO{r-1}{n+r-2} \ar[r]^-{\rstar} \ar[d]_-{\ddm}  &\; \CA^{n-r+2} \ar[r]^-{L^r} \ar[d]_-d  &\; \CA^{n+r+2}\ar[r] \ar[d]_-d   &\; 0 &  \\
& \vdots \ar[d]_-{\ddm} & \vdots \ar[d]_-d & \vdots \ar[d]_-d &  & \\
0\ar[r] &\; \FO{r-1}{2r} \ar[r]^-{\rstar} \ar[d]_-{\ddm} &\; \CA^{2n-2r} \ar[r]^-{L^r} \ar[d]_-d & \;\CA^{2n} \ar[r]   &\; 0  & \\ 
 0\ar[r] & \; \FO{r-1}{2r-1} \ar[r]^-{\rstar} \ar[d]_-{\ddm} &\; \CA^{2n-2r+1}\ar[r]  \ar[d]_-d &\; 0 &  &\\
\: & \; \vdots & \;\vdots & \; & & \\ 
}\label{ccomplex1e}
\end{align}
\end{lem}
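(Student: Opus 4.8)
The plan is to verify exactness of each of the three columns and commutativity of each square in the diagram \eqref{ccomplex1e} separately, since the diagram is assembled from short exact sequences that fit together along the vertical differentials $d$, $\ddp$, and $\ddm$. The heart of the matter is the exactness of the horizontal short exact sequences in three distinct ``regimes'' of the degree: the upper block where the sequence reads $0\to\CA^{k-2r}\xrightarrow{L^r}\CA^k\xrightarrow{\pil{r-1}}\FO{r-1}{k}\to 0$, the two transitional rows near middle degree where one end degenerates to $0$, and the lower block where the sequence reads $0\to\FO{r-1}{2n-k}\xrightarrow{\rstar}\CA^k\xrightarrow{L^r}\CA^{k+2r}\to 0$.

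First I would establish the horizontal exactness using the $\mathfrak{sl}(2)$/Lefschetz decomposition \eqref{sldec} directly. For the upper block, injectivity of $L^r:\CA^{k-2r}\to\CA^k$ follows because $L$ restricted to $\CL^{r',s}$ is injective as long as $r'+s<n$, which holds throughout this range by the degree bound; surjectivity of $\pil{r-1}:\CA^k\to\FO{r-1}{k}$ is immediate from the definition \eqref{pipdef} of $\pil{r-1}$ as a projection onto the low-$\om$-power Lefschetz components; and exactness in the middle — $\ker\pil{r-1}=\im L^r$ — is exactly the statement that the components killed by $\pil{r-1}$ are those of the form $L^{r}B_s$ with $r\ge r-1+1$, i.e. the image of $L^r$. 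For the lower block, one uses \eqref{pistarf}: $\rstar$ maps $\FO{r-1}{2n-k}$ isomorphically onto $\ker[L^r:\CA^k\to\CA^{k+2r}]$ because $A\in\ker L^r$ iff its top Lefschetz components vanish, and $\rstar$ converts the $\FO{r-1}$ condition $L^{n+r}\,\rstar\,A=0$ into precisely this statement via \eqref{rsak}; surjectivity of $L^r$ in this range again follows from injectivity/surjectivity of $L$ on the relevant $\CL^{r',s}$ with $r'$ large. The two transitional rows (degrees $n+r$ and $n-r+1$ on the middle column, paired with a trailing $0$) are handled by the same case analysis — the cokernel or kernel simply vanishes there because $\FO{r-1}{n+r}$ has collapsed.

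Next I would check commutativity of the squares. The left-column squares in the upper block commute because $d\,L^r=L^r\,d$ (the Lefschetz operator commutes with $d$), and likewise in the lower block $d\,L^r=L^r d$. The middle-to-right squares in the upper block commute because $\ddp=\pil{r-1}\circ d$ by definition, so $\pil{r-1}\circ d = \ddp\circ\pil{r-1}$ on $\FO{r-1}{*}$ — this is precisely the content that $\ddp$ is well-defined as the induced operator on the quotient, using that $\pil{r-1}d$ annihilates $\ker\pil{r-1}=\im L^r$ (since $dL^r=L^rd$ and $\pil{r-1}L^r=0$). The left-to-middle squares in the lower block, namely $d\circ\rstar=\rstar\circ\ddm$ on $\FO{r-1}{*}$, follow from the definition $\ddm=\rstar\,d\,\rstar$ in \eqref{dfddm} together with $(\rstar)^2=\bo$. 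Finally I would note $(\ddp)^2=0$, $(\ddm)^2=0$, and $d^2=0$, all already established in the excerpt, so the columns are genuinely complexes; but exactness of the columns is not actually claimed by the lemma (the diagram asserts only commutativity and the exactness of the \emph{horizontal} sequences), so this is a remark rather than part of the proof.

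The main obstacle I expect is bookkeeping rather than conceptual: one must carefully track, for each of the finitely many degree ranges, exactly which Lefschetz components $\CL^{r',s}$ appear in $\CA^k$, in $\FO{r-1}{k}$, and in the image/kernel of $L^r$, and confirm the index inequalities (e.g. $r'+s\le n$ for injectivity of $L$, $r'\ge r$ for membership in $\im L^r$) line up at the boundary rows. A clean way to organize this is to verify everything at the level of a single $\mathfrak{sl}(2)$-irreducible string $\{L^{r'}B_s:0\le r'\le n-s\}$ and then take direct sums over $s$; on such a string all the maps $L^r$, $\pil{r-1}$, $\rstar$ act explicitly and the exactness is a one-line combinatorial check. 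I would present the argument in that reduced form and then simply remark that the full diagram follows by summing and by the already-proved identities $d L=Ld$, $\ddp=\pil{r-1}d$, and $\ddm=\rstar d\rstar$.
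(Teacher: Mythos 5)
Your proposal is correct and follows essentially the same route as the paper, which states Lemma \ref{ccomplex1} without a formal proof and simply appeals to the $(r,s)$-pyramid picture of the Lefschetz decomposition (``it is not hard to see\dots''). Your reduction to a single $\mathfrak{sl}(2)$-irreducible string $\{L^{r'}B_s\}$ for the horizontal exactness, together with the identities $dL^r=L^rd$, $\ddp=\pil{r-1}d$, and $\ddm=\rstar d\,\rstar$ for the commutativity of the squares, is precisely the argument the authors leave implicit, written out carefully.
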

Note that $\FO{r-1}{2r-1} = \FO{\lfloor 2r-1\rfloor}{2r-1}=\CA^{2r-1}\,$.   The above is not a standard exact sequence between the three complexes $(\CA^*, \CA^*, \FO{r-1}{*})$ due to a ``shift" in the middle of the diagram.  This shift is due to the structure of the $(r,s)$ pyramid, and as we will see in Section \ref{sec_Lef}, it provides an explanation for the presence of cohomologies that involve the 2nd order differential operator $\dpp\dpm\,$.

Additionally, for the Lefschetz map, we can also write short exact sequence involving filtered forms $(\FO{l}{*}, \FO{l+r}{*}, \FO{r-1}{*})$.  For instance, for the pyramid diagram Figure \ref{fthree}, the elements of the complexes $(\FO{1}{*}, \FO{2}{*}, \FO{0}{*})$ gives the following series of short exact sequences (with three shifts):
\begin{align*}
\xymatrix
@R-8pt
@H+2pt{
& && \;0\ar[r] & \; \FO{1}{0} \ar[r]^-L & \; \FO{2}{2} \ar[r]^-{\pil{1}} &\; \FO{0}{2} \ar[r] & \; 0\\
& &&\;0\ar[r] & \; \FO{1}{1} \ar[r]^-L & \; \FO{2}{3} \ar[r]^-{\pil{1}} &\; \FO{0}{3} \ar[r] & \; 0\\
& &&\;0\ar[r] & \; \FO{1}{2} \ar[r]^-L & \; \FO{2}{4} \ar[r]^-{\pil{1}} &\; \FO{0}{4} \ar[r] & \; 0\\
& &&\;0\ar[r] & \; \FO{1}{3} \ar[r]^-L & \; \FO{2}{5} \ar[r] &\; 0 & \\
& &\;0\ar[r]&\; \FO{0}{4} \ar[r]^{\rstar} &  \; \FO{1}{4} \ar[r]^-L & \; \FO{2}{6} \ar[r] &\; 0 \\
& &\;0\ar[r]&  \; \FO{0}{3} \ar[r]^-{\rstar} & \; \FO{1}{5} \ar[r] &\; 0 & &\\
&\;0\ar[r]&  \; \FO{2}{6} \ar[r]^-{L^{-2}} & \; \FO{0}{2} \ar[r] &\; 0 & &\\
\;0\ar[r]&\; \FO{1}{5} \ar[r]^-{\iota} &  \; \FO{2}{5} \ar[r]^-{L^{-2}} & \; \FO{0}{1} \ar[r] &\; 0 & & &\\
\;0\ar[r]&\; \FO{1}{4} \ar[r]^-{\iota} &  \; \FO{2}{4} \ar[r]^-{L^{-2}} & \; \FO{0}{0} \ar[r] &\; 0 & & &\\}
\end{align*}

\

In general, we can write the following chain of short exact sequences. 
\begin{lem}\label{ccomplex2}On a symplectic manifold $(M^{2n},\omega)$, there is the following commutative diagram of short exact sequences for $1\leq r<n$:
\begin{align*}
\xymatrix
@R-10pt
@C-10pt
@H-13pt
{
& & & & & \vdots \ar[d]_-{\ddp} & \vdots \ar[d]_-{\ddp} & \\
& && &\;0\ar[r] & \; \FO{l+r}{2r-1} \ar[r]^-{\pil{r-1}} \ar[d]_-{\ddp}&\; \FO{r-1}{2r-1} \ar[r] \ar[d]_-{\ddp}& \; 0\\
& && \;0\ar[r] & \; \FO{l}{0} \ar[r]^-{L^r}\ar[d]_-{\ddp}& \; \FO{l+r}{2r} \ar[r]^-{\pil{r-1}}\ar[d]_-{\ddp} &\; \FO{r-1}{2r} \ar[r] \ar[d]_-{\ddp}& \; 0\\
& & & & \vdots \ar[d]_-{\ddp}& \vdots \ar[d]_-{\ddp}& \vdots \ar[d]_-{\ddp}& \\
& &&\;0\ar[r] & \; \FO{l}{n-r-1} \ar[r]^-{L^r}\ar[d]_-{\ddp}& \; \FO{l+r}{n+r-1} \ar[r]^-{\pil{r-1}} \ar[d]_-{\ddp}&\; \FO{r-1}{n+r-1} \ar[r] \ar[d]_-{\ddp}& \; 0\\
& &&\;0\ar[r] & \; \FO{l}{n-r} \ar[r]^-{L^r}\ar[d]_-{\ddp}& \; \FO{l+r}{n+r} \ar[r] \ar[d]_-{\ddp}&\; 0 & \\
& &\;0\ar[r]&\; \FO{r-1}{n+r-1} \ar[r]^-{\rstar} \ar[d]_-{\ddm}&  \; \FO{l}{n-r+1} \ar[r]^-{L^r}\ar[d]_-{\ddp}& \; \FO{l+r}{n+r+1} \ar[r] \ar[d]_-{\ddp}&\; 0 \\
& & & \vdots \ar[d]_-{\ddm} & \vdots \ar[d]_-{\ddp}& \vdots \ar[d]_-{\ddp}&  &\\
& &\;0\ar[r]&\; \FO{r-1}{n+r-l} \ar[r]^-{\rstar}\ar[d]_-{\ddm} &  \; \FO{l}{n-r+l} \ar[r]^-{L^r}\ar[d]_-{\ddp}& \; \FO{l+r}{n+r+l} \ar[r]&\; 0 \\
& &\;0\ar[r] & \FO{r-1}{n+r-l-1} \ar[r]^-{\rstar} \ar[d]_-{\ddm}& \; \FO{l}{n-r+l+1} \ar[r] \ar[d]_-{\ddp}&\; 0 & &\\
&\;0\ar[r] & \FO{l+r}{n+r+l} \ar[r]^{L^{-(l+1)}}\ar[d]_-{\ddm}& \; \FO{r-1}{n+r-l-2} \ar[r]^-{\rstar}\ar[d]_-{\ddm} & \; \FO{l}{n-r+l+2} \ar[r]\ar[d]_-{\ddp}&\; 0 & &\\
& & \vdots \ar[d]_-{\ddm}& \vdots \ar[d]_-{\ddm}& \vdots\ar[d]_-{\ddp} & & &\\
&\;0\ar[r] & \FO{l+r}{n+l+2} \ar[r]^-{L^{-(l+1)}}\ar[d]_-{\ddm}& \; \FO{r-1}{n-l} \ar[r]^-{\rstar}\ar[d]_-{\ddm} & \; \FO{l}{n+l} \ar[r]&\; 0 & &\\
&\;0\ar[r]&  \; \FO{l+r}{n+l+1} \ar[r]^{L^{-(l+1)}}\ar[d]_-{\ddm} & \; \FO{r-1}{n-l-1} \ar[r] \ar[d]_-{\ddm}&\; 0 & &\\
0\ar[r]&\; \FO{l}{n+l} \ar[r]^-{\iota}\ar[d]_-{\ddm} &  \; \FO{l+r}{n+l} \ar[r]^-{L^{-(l+1)}}\ar[d]_-{\ddm} & \; \FO{r-1}{n-l-2} \ar[r] \ar[d]_-{\ddm} &\; 0 & & &\\
& \vdots \ar[d]_-{\ddm} & \vdots \ar[d]_-{\ddm} & \vdots \ar[d]_-{\ddm} & & & & \\
0\ar[r]&\; \FO{l}{l+2} \ar[r]^-{\iota}\ar[d]_-{\ddm} &  \; \FO{l+r}{l+2} \ar[r]^-{L^{-(l+1)}}\ar[d]_-{\ddm} & \; \FO{r-1}{0} \ar[r] \ar[d]_{\ddm} &\; 0 & & &\\
0\ar[r]&\; \FO{l}{l+1} \ar[r]^-{\iota}\ar[d]_-{\ddm} &  \; \FO{l+r}{l+1} \ar[r] \ar[d]_-{\ddm} & \; 0 & & & &\\
& \; \vdots & \vdots & & & & & \\
}
\end{align*}
\end{lem}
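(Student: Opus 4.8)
The plan is to establish the diagram by verifying its two defining features: (i) every row is a short exact sequence, and (ii) every square commutes. This is the filtered analogue of Lemma~\ref{ccomplex1}, and its proof runs parallel; the displayed chain of short exact sequences for $(\FO{1}{*},\FO{2}{*},\FO{0}{*})$ in dimension eight is the concrete template that one generalizes.

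Property (i) is a fiberwise assertion about the $(r,s)$-pyramid, so it suffices to track Lefschetz components. The rows fall into four bands. In the top band they read $0\to\FO{l}{k}\xrightarrow{L^{r}}\FO{l+r}{k+2r}\xrightarrow{\pil{r-1}}\FO{r-1}{k+2r}\to 0$: the projection $\pil{r-1}$ is surjective by the definition of the filtered forms, $L^{r}$ is injective on $\FO{l}{k}$ in the range $k\le n-r$ occurring here (all of the relevant $L^{r+j}B_{k-2j}$, $0\le j\le l$, are nonzero), and exactness in the middle is exactly the identity $\bo=\pil{r-1}+L^{r}L^{-r}$ of \eqref{relation1} together with the fact that $L^{-r}$ carries $\FO{l+r}{k+2r}$ back onto $\FO{l}{k}$. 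The bottom band, $0\to\FO{l}{k}\xrightarrow{\iota}\FO{l+r}{k}\xrightarrow{L^{-(l+1)}}\FO{r-1}{k-2l-2}\to 0$, is handled identically with $\bo=\pil{l}+L^{l+1}L^{-(l+1)}$ in place of \eqref{relation1}, so that $\ker\!\big(L^{-(l+1)}|_{\FO{l+r}{k}}\big)=\FO{l}{k}$ is the image of the inclusion $\iota$. In the two transitional bands $\rstar$ replaces the Lefschetz map: the rows are $0\to\FO{r-1}{m}\xrightarrow{\rstar}\FO{l}{2n-m}\xrightarrow{L^{r}}\FO{l+r}{2n-m+2r}\to 0$ and its lower analogue, and since $\rstar$ is a bijection with $(\rstar)^{2}=\bo$ it is enough to check that $\rstar$ carries $\FO{r-1}{m}$ onto the kernel of the Lefschetz map on the nose — a one-line computation from $\rstar(L^{j}B_{s})=L^{\,n-j-s}B_{s}$, i.e.\ the $p$-filtered condition \eqref{pistarf}. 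The ``turn'' rows with a vanishing third term, such as $0\to\FO{l}{n-r}\xrightarrow{L^{r}}\FO{l+r}{n+r}\to 0$, are the extreme cases in which the Lefschetz (or $\rstar$) map has become an isomorphism.

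For (ii) I organize the squares by which differential they involve. When both vertical maps are $\ddp$, use $\ddp=\pil{p}\circ d$ on $\FO{p}{*}$ and $[L,d]=0$ (which follows from $d\om=0$, equivalently $[L,\dpp]=[L,L\dpm]=0$): the squares built from $L^{r}$ commute because $\pil{l+r}dL^{r}=\pil{l+r}L^{r}d=L^{r}\pil{l}d$, and those built from $\pil{r-1}$ commute because $\pil{r-1}\ddp=\pil{r-1}d$ while $\ddp\pil{r-1}=\pil{r-1}d\pil{r-1}$, the difference being $\pil{r-1}d(\bo-\pil{r-1})=\pil{r-1}L^{r}dL^{-r}=0$ since $\pil{r-1}L^{r}=0$. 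When both vertical maps are $\ddm$, the horizontal maps $\iota$ and $L^{-(l+1)}=\rstar L^{l+1}\rstar$ both commute with $\ddm=\rstar d\,\rstar$ — trivially for $\iota$, and for the latter because $[L,d]=0$ — so those squares are immediate. The only squares requiring genuine work are the mixed ones, across which a column passes its top degree and switches from $\ddp$ to $\ddm$ and the horizontal map is one of the $\rstar$'s (or the second-order map $\dpp\dpm$ of the elliptic complex \eqref{filterecomplex}). These are treated exactly as in the middle of \eqref{ccomplex1e}: the essential input is Lemma~\ref{Leibniz02}, which relates $\pil{p}\,\rstar\,d$ to $\ddm\,\pil{p}\,\rstar$ up to a correction term that vanishes on the filtered subspaces in play by a degree count, and $\ddm=\dpm+\dpp L^{-1}$ from \eqref{dmsimp} together with $(\dpp)^{2}=(\dpm)^{2}=0$ and $L(\dpp\dpm+\dpm\dpp)=0$ closes the verification at the turns.

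The hard part is not any single identity but the bookkeeping: one must pin down the degree ranges of all four bands and the exact locations of the ``turns,'' and check throughout that $L^{r}$, $L^{-(l+1)}$, $\rstar$ and $\iota$ actually land in the asserted filtered subspaces, with images and kernels equal to the stated $\FO{l}{*}$, $\FO{l+r}{*}$ and $\FO{r-1}{*}$ rather than merely contained in them. Once this arithmetic is fixed by the $(r,s)$-pyramid — just as in Figures~\ref{sympr} and \ref{fthree} — each row is a one-line consequence of \eqref{relation1} or its $\rstar$-adjoint and each square a short manipulation with the $\mathfrak{sl}(2)$ relations and Lemma~\ref{Leibniz02}.
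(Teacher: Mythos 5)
The paper states Lemma~\ref{ccomplex2} without proof, leaving the verification to inspection of the $(r,s)$-pyramid, and your proposal is essentially the verification the authors intend: row exactness by tracking Lefschetz components and using $\bo=\pil{p}+L^{p+1}L^{-(p+1)}$ together with $\rstar(L^jB_s)=L^{n-j-s}B_s$, and commutativity of the squares from $[L,d]=0$, $\pil{r-1}L^r=0$, and Lemma~\ref{Leibniz02} at the turns. Your treatment of the top band, the bottom band, the $\ddp$--$\ddp$ and $\ddm$--$\ddm$ squares, and the mixed squares is correct.

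One localized point needs repair. Your description of the transitional bands --- ``$\rstar$ carries $\FO{r-1}{m}$ onto the kernel of the Lefschetz map on the nose'' --- is accurate only for the upper one, $0\to\FO{r-1}{n+r-i}\xrightarrow{\rstar}\FO{l}{n-r+i}\xrightarrow{L^r}\FO{l+r}{n+r+i}\to0$, where indeed $\rstar$ sends $L^jB_s$ to $L^{i-r+j}B_s$ with $i-r+j\le i-1\le l-1$, so the image lies in $\FO{l}{n-r+i}$ and equals $\ker L^r$ there. The lower transitional band has a different shape: in $0\to\FO{l+r}{n+r+l-i}\xrightarrow{L^{-(l+1)}}\FO{r-1}{n+r-l-2-i}\xrightarrow{\rstar}\FO{l}{n-r+l+2+i}\to0$ the map $\rstar$ is the \emph{surjection}, not the injection, and the literal $\rstar$ of $L^jB_{n+r-l-2-i-2j}$ is $L^{l+2+i-r+j}B_{n+r-l-2-i-2j}$, whose Lefschetz power reaches $l+1+i$ for the top components $j\ge r-1-i$; these do \emph{not} lie in $\FO{l}{n-r+l+2+i}$. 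The map must therefore be read as $\pil{l}\circ\rstar$, and exactness in the middle is the statement that the components annihilated by this projection (those with $j\ge r-1-i$) are precisely the image of $L^{-(l+1)}$ from $\FO{l+r}{n+r+l-i}$ --- a statement your general bookkeeping handles, but which is not the ``$\rstar$ is a bijection onto $\ker L^r$'' argument you give. The corresponding mixed squares then commute because $\pil{l}\,d\,\pil{l}\rstar-\pil{l}\,d\,\rstar=-\pil{l}L^{l+1}\,d\,L^{-(l+1)}\rstar=0$, i.e.\ exactly Lemma~\ref{Leibniz02}, as you say. With this correction your proof is complete.
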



\section{Filtered cohomologies}

\subsection{Elliptic complexes and associated cohomologies}

In Paper II \cite[Proposition 2.8]{TY2}, the following differential complex of primitive form was shown to be elliptic:
\begin{align} \label{elliptic_cpx1} \xymatrix{
0\; \ar[r] &\; \CB^0 \;\ar[r]^-\dpp &\; \CB^1 \;\ar[r]^-\dpp &~ \ldots ~\ar[r]^-\dpp &\; \CB^{n-1} \;\ar[r]^-\dpp &\; \CB^{n} \; \ar[d]^-{\dpp\dpm}\\
0\; &\; \CB^0 \;\ar[l] &\; \CB^1 \;\ar[l]_-\dpm &~ \ldots ~\ar[l]_-\dpm &\; \CB^{n-1} \;\ar[l]_-\dpm &\; \CB^{n} \ar[l]_-\dpm \;
} \end{align}
This complex was found in the four-dimensional case by Smith in 1976 \cite{Smith}.   In higher dimensions, besides \cite{TY2}, it was also independently found by Eastwood and Seshadri \cite{Eastwood1} (see also \cite{BryantE, Eastwood2}) who were motivated by the hyperelliptic complex of Rumin in contact geometry \cite{Rumin}. 

In the context of filtered forms, primitive forms correspond to $\FO{p}{k}$ with $p=0$, and therefore, we can rewrite the primitive elliptic complex equivalently as 
\begin{align*}\xymatrix{
0\; \ar[r] &\; \FO{0}{0} \;\ar[r]^-\ddp &\; \FO{0}{1} \;\ar[r]^-\ddp &~ \ldots ~\ar[r]^-\ddp &\; \FO{0}{n-1} \;\ar[r]^-\ddp &\; \FO{0}{n} \; \ar[d]^-{\dpp\dpm}\\
0\; &\; \FO{0}{0} \;\ar[l] &\; \FO{0}{1} \;\ar[l]_-\ddm &~ \ldots ~\ar[l]_-\ddm &\; \FO{0}{n-1} \;\ar[l]_-\ddm &\; \FO{0}{n} \ar[l]_-\ddm \;
} \end{align*}
Written in this form and with the introduction of more general $p$-filtered forms, it is then natural to consider complexes with higher filtration degree $p$ by replacing in the complex above $\FO{0}{*}$ with $\FO{p}{*}$.  Indeed, the resulting complexes are elliptic as well.

\begin{thm}\label{thrmecomplex}
The following differential complex is elliptic for $0\leq p\leq n$.
\begin{align} \label{fecomplex} \xymatrix{
0\; \ar[r] &\; \FO{p}{0} \; \ar[r]^-{\ddp} & \; \FO{p}{1} \; \ar[r]^-{\ddp} & ~ \ldots ~ \ar[r]^-{\ddp} & \; \FO{p}{n+p-1} \; \ar[r]^-{\ddp} & \; \FO{p}{n+p} \ar[d]^-{\dpp\dpm} \\
0\; &\; \FO{p}{0} \; \ar[l] & \; \FO{p}{1} \; \ar[l]_-{\ddm} & ~ \ldots ~ \ar[l]_-{\ddm} & \; \FO{p}{n+p-1} \; \ar[l]_-{\ddm} & \; \FO{p}{n+p} \ar[l]_-{\ddm}
} \end{align}
\end{thm}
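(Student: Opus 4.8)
The plan is to prove ellipticity by computing the symbol sequence of the complex \eqref{fecomplex} at an arbitrary nonzero cotangent vector $\xi \in T_x^*M$ and showing it is exact. Since ellipticity is a pointwise, zeroth-order statement about principal symbols, we may work at a fixed point $x$ and, using Darboux coordinates, reduce to the linear-algebraic model where $M = \mathbb{R}^{2n}$ with the standard symplectic form and the differential operators are replaced by their symbols. The key observation is that the two first-order operators $\ddp$ and $\ddm$ have principal symbols governed by wedging with $\xi$ (composed with the projections $\pil{p}$, respectively the reflection $\rstar$), while the middle second-order operator $\dpp\dpm$ has as its symbol essentially the $\mathfrak{sl}(2)$-covariant quadratic expression built from $\xi$. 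So the symbol complex will look like the complex \eqref{fecomplex} with $\ddp$ replaced by $\sigma(\ddp)(\xi) = \pil{p}\,(\xi \wedge -)$, with $\ddm$ replaced by $\sigma(\ddm)(\xi) = \rstar\,(\xi\wedge -)\,\rstar$, and $\dpp\dpm$ replaced by the appropriate symbol obtained from the Lefschetz decomposition of $\xi\wedge-$.

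The main step is then to verify exactness of this symbol complex. Here I would exploit the short exact sequences of Lemma \ref{ccomplex1}: at the symbol level, wedging with $\xi$ (a nonzero covector) plays the role that the Lefschetz operator $L$ plays algebraically in those diagrams — both are injective-with-cokernel maps built from a fixed nonzero $2$-form, respectively $1$-form. Concretely, the symbol complex of \eqref{fecomplex} for $p$-filtered forms should be identified, via Lemma \ref{ccomplex1} applied to the (contact-type) structure determined by $\xi$, with a mapping cone or iterated extension of de Rham–type Koszul complexes for $\xi\wedge-$ on $\mathbb{R}^{2n}$, each of which is exact away from $\xi=0$ by the standard Cartan homotopy $\iota_v$ with $\xi(v)=1$. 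The ``bend'' in the middle of the diagram in Lemma \ref{ccomplex1} is exactly what accounts for the second-order operator $\dpp\dpm$ sitting at the apex of \eqref{fecomplex}, so the bookkeeping there transfers directly: one checks exactness in the top row (the $\ddp$ part), in the bottom row (the $\ddm$ part, which is the $\rstar$-conjugate of the top, hence exact by symmetry), and at the two junctions around $\FO{p}{n+p}$, where the second-order symbol intervenes.

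The hard part will be the junction at $\FO{p}{n+p}$: one must show that $\ker\big[\sigma(\dpp\dpm)(\xi)\big] = \im\big[\sigma(\ddp)(\xi): \FO{p}{n+p-1}\to\FO{p}{n+p}\big]$ and dually that $\sigma(\dpp\dpm)(\xi)$ surjects onto $\ker\big[\sigma(\ddm)(\xi):\FO{p}{n+p}\to\FO{p}{n+p-1}\big]$. This is where the genuinely symplectic (as opposed to merely topological) content lives, because $\FO{p}{n+p}\cong \CB^{n-p}$ and the relevant symbol is not simply $\xi\wedge-$ but the primitive-projected composition $\dpp\dpm$, whose symbol mixes $\xi$ with the symplectic form nontrivially. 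I expect this to come down to a rank computation: showing that the symbol of $\dpp\dpm$ on $\CB^{n-p}$ has exactly the rank forced by exactness, which can be done either by diagonalizing the $\mathfrak{sl}(2)$-action relative to the splitting $\xi = \xi_+ \oplus \xi_-$ induced by a compatible complex structure, or by reducing to the already-known ellipticity of the primitive complex \eqref{elliptic_cpx1} (the $p=0$ case, established in \cite{TY2}) and then propagating upward in $p$ using the short exact sequences of filtered forms in Lemma \ref{ccomplex2} together with the long-exact-sequence/five-lemma argument. The cleanest route is likely the inductive one: assume the complex is elliptic for filtration degree $p-1$ and for the primitive case, splice in the short exact sequence $0\to \FO{p-1}{*}\to\FO{p}{*}\to L^p\CB^{*-2p}\to 0$ of symbol complexes, and conclude exactness of the middle from exactness of the two ends.
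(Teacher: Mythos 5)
Your overall skeleton --- reduce to exactness of the symbol sequence at a fixed $\xi\neq 0$ in Darboux/$\mathrm{Sp}(2n)$-normalized coordinates, identify $\si(\ddp)=\pil{p}(\xi\w\cdot)$ and $\si(\ddm)=\rstar(\xi\w\cdot)\rstar$, and check the top row, the bottom row, and the two junctions at $\FO{p}{n+p}$ separately --- is exactly the paper's framework. But both mechanisms you propose for carrying out the verification have gaps precisely where the symplectic content lives. For the top row, exactness does \emph{not} follow from the Koszul complex of $\xi\w\cdot$ and a Cartan homotopy: because of the projection $\pil{p}$, the kernel of $\si(\ddp)$ is strictly larger than the kernel of $\xi\w\cdot$ --- it also contains every $\eta_k$ with $\xi\w\eta_k=\om^{p+1}\w\mu_{k-2p-1}\neq 0$ --- and exhibiting such $\eta_k$ as $\si(\ddp)$ of a filtered element requires the explicit primitive decomposition $\mu = e_1\w\be^1+e_2\w\be^2+e'_{12}\w\be^3+\be^4$ and the identity expressing $e'_{12}\w\be^3$ as $\pil{0}(e_1\w(\cdots))$; this is the actual work in the paper's step (1). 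Dually, in the bottom row the preimage supplied by the $\rstar$-reflected de Rham argument need not lie in $F^p\Bw T_x^*$, and one must correct it by an element of $\ker\si(\dpp)$ using the images $\im\si(\dpp)=\{e'_{12}\w\be^2,\,e_1\w\be^4\}$ and $\im\si(\dpm)=\{\be^2,\,e_1\w\be^3\}$; you do not address this.

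The inductive alternative fails as stated. The inclusion $\FO{p-1}{*}\hookrightarrow\FO{p}{*}$ is \emph{not} a chain map: for $A\in\FO{p-1}{k}$ with top Lefschetz component $L^{p-1}B_{k-2p+2}$, one has $\pil{p}dA-\pil{p-1}dA = L^{p}\dpm B_{k-2p+2}$, which is generically nonzero, so $0\to\FO{p-1}{*}\to\FO{p}{*}\to L^p\CB^{*-2p}\to 0$ is not a short exact sequence of complexes and the five lemma does not apply. Moreover the two ends are not exact complexes in their own right: the quotient carries only the $\dpp$-differential on $\CB^{*-2p}$, which is not exact at top degree (that is why the bend and the second-order operator $\dpp\dpm$ are needed at all), and the bends of $\FO{p-1}{*}$ and $\FO{p}{*}$ sit at different degrees, producing exactly the ``shifts'' that force the paper to introduce the modified cohomologies of Proposition \ref{etriple} rather than use a standard long exact sequence. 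Finally, at the two middle junctions the paper computes $\ker\si(\dpp\dpm)=\im\si(\ddp)=\{e_1\w\be^1,\,e'_{12}\w\be^3,\,\be^4\}$ and $\ker\si(\ddm)=\im\si(\dpp\dpm)=\{e_1\w\be^1\}$ explicitly; your ``rank computation'' gestures at this but supplies no argument, and it is the step that cannot be outsourced to de Rham exactness.
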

\begin{proof}
Recall from the previous section that $(\ddp)^2= (\ddm)^2 =0$.  Moreover, it is straightforward to check that $(\dpp\dpm)\ddp=\ddm(\dpp\dpm)=0$ acting on any $p$-filtered form.  Hence, \eqref{fecomplex} is a differential complex.

To prove that the complex is elliptic, we need to show that the associated symbol complex is exact at each point $x\in M$.  Let $\xi\in T_x^*\backslash\{0\}$.  By an $\mathrm{Sp}(2n)$ transformation, we can set $\xi = e_1$ and take the symplectic form to be $\om = e_1\w e_2 + e_3\w e_4 + \ldots + e_{2n-1} \w e_{2n}$, where $e_1, \ldots, e_{2n}$ spans a basis for $T_x^*$.   Let $\eta_k\in F^p\Bw^kT_x^*$.  Then we can write
\begin{align}\label{sidec}\eta_k= \mu_k + \om \w \mu_{k-2} + \ldots + \om^p \w \mu_{k-2p}~,
\end{align}
where the $\mu$'s denote elements of the primitive exterior vector space, $\CB\Bw T_x^*$.  Each primitive vector can also be decomposed as \cite[Lemma 2.3]{TY2}
\begin{align} \label{pvde}
\mu_l  = e_1 \w \be_{l-1}^1 + e_2 \w \be_{l-1}^2 + e'_{12} \w \be_{l-2}^3 + \be_l^4 
\end{align}
where $\be^1,\be^2,\be^3,\be^4 \in\CB\Bw^*T_x^*$ are primitive exterior products involving only $e_3, e_4, \ldots, e_{2n}$, and
\begin{align*}
e'_{12} = e_1 \w e_2 - \frac{1}{H+1} \sum^n_{j=2} e_{2j-1}\w e_{2j}~.
\end{align*}
Here, $H$ is the operator defined by \eqref{def_sl2}.  In the following argument, the letter $\eta$ always means a $p$-filtered element, and $\mu$ always means a primitive element.  The symbol is denoted by $\sigma$.

We will show the exactness in four steps.

(1) Exactness of the symbol sequence corresponding to the top line  $0 \to \FO{p}{0} \to \ldots \to \FO{p}{n+p-1}$.  

Since $\ddp\ddp=0$, it is clear that $\im\si(\ddp)\subset \ker \si(\ddp)$.  We need to show that $\ker\si(\ddp) \subset \im \si(\dpp)$.  Now, $\si(\ddp) = \pil{p}(e_1\w \cdot)\,$.   So if $\eta_k\in \ker\si({\ddp})$, then either (i) $e_1 \w \eta_k =0$  or  (ii) $e_1 \w \eta_k = \om^{p+1} \w \mu_{k-2p-1}\neq0$.  In case (i), it follows by the exactness of the symbol complex associated with the de Rham complex that there exists an $\zeta_{k-1}\in \Bw^{k-1}T_x^*$ such that $\eta_k = e_1 \w \zeta_{k-1}$.  But since the operation $e_1 \w \cdot$ can only preserve the filtration degree $p$ or increases it by one, we conclude that $\eta_k=\si(\ddp)(\pip\zeta_{k-1})$.  In case (ii), $\eta_k$ must contain a nontrivial Lefschetz component $\om^p \w \mu_{k-2p}$ \eqref{sidec}.  By \eqref{pvde},
\begin{align*}
e_1 \w \mu_{k-2p} & = e_1 \w \left( e_1 \w \be^1_{k-2p-1} + e_2 \w \be^2_{k-2p-1} + e'_{12} \w \be^3_{k-2p-2} + \be^4_{k-2p}\right)\\
&= c_1\, e'_{12}\w \be^2_{k-2p-1} + c_2\, \om \w \be^2_{k-2p-1} + c_3 \,\om \w e_1 \w \be^3_{k-2p-2} + e_1 \w \be^4_{k-2p}
\end{align*}
for some non-zero constants $c_1, c_2$ and $c_3$.  This implies that $\mu_{k-2p}$ must have a nonzero $\be^2$ or $\be^3$ term.  However, a $\be^2$ term is not possible since the first term $e'_{12}\w \be^2$ can not be canceled to satisfy $\pil{p}(e_1\w\eta_k)=0$.  This is because  $e'_{12}\w \be^2 \notin \im \si(\dpm)$ \cite[(2.36)]{TY2} (or see \eqref{siim} below).  Hence, we only need to worry about the $e'_{12}\w\be^3$ term in $\mu_{k-2p}\,$, and express it as an element of $\im\si(\ddp)$.  To do so, we note that
\begin{align*}
e_1\w e_2 \w \be^3_{k-2p-2} = \dfrac{H+1}{H+2} \, e'_{12} \w \be^3_{k-2p-2} + \dfrac{1}{H+2} \,\om \w \be^3_{k-2p-2}~.
\end{align*}
Therefore, we can write $e'_{12} \w \be^3_{k-2p-2}  =  \frac{H+2}{H+1}\,  \pil{0} (e_1\w e_2 \w \be^3_{k-2p-2})=  \pil{0} \left\{e_1 \w \left[\frac{H+1}{H} (e_2 \w \be^3_{k-2p-2})\right]\right\}\,$.

\

(2) Exactness of the symbol sequence corresponding to the bottom line $0  \gets  \FO{p}{0} \gets \ldots \gets \FO{p}{n+p-1}$.

Note that the reflection of the de Rham complex by $\rstar\,$ gives an elliptic $\ddm$-complex, and therefore, $\ker\si(\ddm)= \im\si(\ddm)\,$ for the $\Bw T_x^*$ sequence.   Now for a filtered space, suppose that $\eta_k\in F^p\Bw^{k}T_x^*$ and $\si(\ddm) \eta_k =0$.   By the exactness of the $\ddm$-complex, $\eta_k = \si(\ddm) \xi_{k+1}$ for some $\xi_{k+1}\in \Bw^{k+1}T_x^*\,$.  It suffices to show that we can choose a $\xi_{k+1}$ such that $\xi_{k+1} \in F^p\Bw^{k}T_x^*\,$.

Consider the Lefschetz decomposition of $\eta_k$ as in \eqref{sidec}, and write 
\begin{align*}
\xi_{k+1} = \tmu_{k+1} + \ldots + \om^p \w \tmu_{k-2p+1} + \om^{p+1} \w \tmu_{k-2p-1} + \ldots ~,
\end{align*}
where $\tmu$'s are elements of the primitive vector space $\CB\Bw^{*}T_x^*$.  Using \eqref{dmsimp}, we have
\begin{align*}
\eta_k &= \si(\ddm)\xi_{k+1} \\
&= \si(\ddm)\left(\tmu_{k+1} + \ldots + \om^{p-1}\w \tmu_{k-2p+3}\right) +  \si(\dpp) (\om^{p-1} \w \tmu_{k-2p+1}) 
\\ &~~~~~~~~ + \si(\dpm)(\om^p \w \tmu_{k-2p+1}) + \si(\dpp)(\om^{p} \w \tmu_{k-2p-1})~. 
\end{align*}
Since $\si(\dppm): \CB\Bw^{k}T_x^*\to\CB\Bw^{k\pm 1}T_x^*\,$, it implies for the $\om^p$ term that
\begin{align*}
\mu_{k-2p} = \si(\dpm) \tmu_{k-2p+1} + \si(\dpp) \tmu_{k-2p-1}~.
\end{align*}  
Now the condition $\si(\ddm) \eta_k =0$ requires  $\si(\dpm) \mu_{k-2p} =0$.  Hence, in the decomposition \eqref{pvde} for $\mu_{k-2p}$, there are only two non-zero terms,
\begin{align}\label{mutwo}
\mu_{k-2p} = e_1 \w \be^1_{k-2p-1} + \be^4_{k-2p}~.
\end{align}
as $\{e_2\w \be^2, e'_{12}\w \be^3\} \notin \ker \si(\dpm)$ \cite[(2.39)]{TY2}.
Let us also recall from \cite[(2.35) and (2.36)]{TY2} that acting on the decomposition \eqref{pvde}, we have
\begin{align}
\im \si(\dpp) &= \left\{e'_{12} \w \be^2\,, e_1 \w \be^4\right\} ~,\label{siim1}\\
\im\si(\dpm) &= \left\{ \be^2\,, e_1 \w \be^3\right\}~. \label{siim}
\end{align} 
Hence, it is clear that if $\si(\dpp) \tmu_{k-2p-1}$ is non-zero, then $\si(\dpp)\tmu_{k-2p-1} =  e_1 \w \be'_{k-2p-1}$ for some primitive element $\be'$ independent of $e_1$ and $e_2$.  But then by \eqref{siim}, we can write
\begin{align*}
\si(\dpp)\tmu_{k-2p-1} = c\, \si(\dpm) e'_{12} \w \be'_{k-2p-1}
\end{align*}
where $c$ is some non-zero constant.   Letting $\mu'_{k-2p+1} = c\, e'_{12} \w \be'_{k-2p-1}\,$ and noting that $\si(\dpp) \mu'_{k-2p+1} =\si(\dpp)(c\, e'_{12} \w \be'_{k-2p-1}) =0$,  we obtain the desired result
\begin{align*}
\eta_k  = \si(\ddm)\left[\tmu_{k+1} + \ldots + \om^{p-1}\w \tmu_{k-2p+3} + \om^p \w (\tmu_{k-2p+1} + \mu'_{k-2p+1}) \right]~.
\end{align*}

\

(3) Exactness of the symbol sequence corresponding to $ \FO{p}{n+p-1} \to \FO{p}{n+p} \to \FO{p}{n+p}$. 

By \eqref{kpmo} and \eqref{kpnp},  we need to show that
\begin{align*}\xymatrix{
&\; \CB\!\Bw^{n-p+1}T_x^* \oplus \CB\!\Bw^{n-p-1}T_x^* ~\ar[r]^{~~~~~~~~~~~\si(\ddp)}   &\;\CB\!\Bw^{n+p}T_x^* ~\ar[r]^{\si(\dpp\dpm)} & \; \CB\!\Bw^{n+p}T_x^* &\;}
\end{align*}
is exact at the middle.  In terms of the decomposition of \eqref{pvde}, it is straightforward to check that 
\begin{align*}
\ker \si (\dpp\dpm) = \im \si(\ddp) = \left\{ e_1 \w \be^1_{n-p-1}\, , e'_{12} \w \be^3_{n-p-2}\, , \be^4_{n-p}\right\}
\end{align*}

\

(4) Exactness of the symbol sequence corresponding to $\FO{p}{n+p} \to \FO{p}{n+p} \to \FO{p}{n+p-1}$.

Here we need to show that the symbol sequence
\begin{align*}\xymatrix{
&\;\CB\!\Bw^{n+p}T_x^* ~\ar[r]^{\si(\dpp\dpm)} & \; \CB\!\Bw^{n+p}T_x^*~ \ar[r]^{\!\!\!\!\!\!\!\!\!\!\!\!\!\!\!\!\!\!\!\!\!\!\!\!\!\!\!\!\!\!\!\si(\ddm)}   &\; \CB\!\Bw^{n-p+1}T_x^* \oplus \CB\!\Bw^{n-p-1}T_x^* }
\end{align*}
is exact at the middle.  In terms of the decomposition of \eqref{pvde}, it is clear that
\begin{align*}
\ker \si(\ddm) = \im \si(\dpp\dpm) =  \left\{ e_1 \w \be^1_{n-p-1}\right\}~.
\end{align*}
\end{proof}

Having established the ellipticity of the complex \eqref{fecomplex}, we have also shown the finite-dimensionality of the associated {\it filtered} cohomologies which we shall denote by
\begin{align}\label{fecomplexcoh}
F^pH
&=\left\{\FHP{p}{0}, \FHP{p}{1}, \ldots, \FHP{p}{n+p},\, \FHM{p}{n+p}, \ldots, \FHM{p}{1}, \FHM{p}{0}\right\}
\end{align}
where 
$$\FHP{p}{k} = \frac{\ker(\ddp)\cap \FO{p}{k}}{\ddp(\FO{p}{k-1})}~, \qquad
\FHM{p}{k} = \frac{\ker(\ddm)\cap \FO{p}{k}}{\ddm(\FO{p}{k+1})}~,$$
for $k=0,1,\cdots,n+p-1\,$ and
$$\FHP{p}{n+p} = \frac{\ker(\dpp\dpm)\cap \FO{p}{n+p}}{\ddp(\FO{p}{n+p-1})}~,\qquad
\FHM{p}{n+p} = \frac{\ker(\ddm)\cap \FO{p}{n+p}}{\dpp\dpm(\FO{p}{n+p})}~.$$

Let us make several comments concerning these filtered cohomologies.  First, modulo powers of $L$, we can make the identification: 
$$ F^p\CA^{n+p-1} \cong~ \CB^{n-p+1}\,\oplus\,\CB^{n-p-1}~.$$   For the middle of the elliptic complex \eqref{fecomplex}, such an identification translates into 
\begin{align*} \xymatrix{
\cdots \;\ar[r] &\; \CB^{n-p+1}\oplus\CB^{n-p-1} \;\ar[r]^-{\dpm\,+\,\dpp} &\; \CB^{n-p} \;\ar[r]^-{\dpp\dpm} &\; \CB^{n-p} \;\ar[r]^-{\dpp\,\oplus\,\dpm} &\; \CB^{n-p+1}\oplus\CB^{n-p-1} \;\ar[r] &\; \cdots ~ .
} \end{align*}
Thus, the middle two cohomologies of \eqref{fecomplexcoh} are equivalent to $\PHPR{n-p}(M)$ and $\PHPL{n-p}(M)$ introduced in \cite{TY2}.  Specifically,
\begin{align}\label{fphm}
\FHP{p}{n+p}(M)\cong \PHPR{n-p}(M) &= \frac{\ker \dpp\dpm \cap \CB^{n-p}(M)}{\dpp\CB^{n-p-1} + \dpm \CB^{n+p+1}}~,\\
\FHM{p}{n+p}(M)\cong \PHPL{n-p}(M) &=  \frac{\ker(\dpp+\dpm) \cap \CB^{n-p}(M)}{\dpp\dpm \CB^{n-p}(M)} ~.\label{fphmm}
\end{align}
Second, since $\FO{p}{k}=\CA^k$ for $k \leq 2p+1$ as noted in \eqref{klep}, the section of the elliptic complex consisting of the first $2p+1$ elements of the top line of \eqref{fecomplex} is effectively equivalent to the usual de Rham complex.   Similarly, the section of the bottom line involving the last  $2p+1$ elements is equivalent the $\rstar$ dual of the de Rham complex.   Thus, we have the following relations:
\begin{align} 
\FHP{p}{k}(M) &= \HD{k}(M)~, \qquad\qquad\qquad {\rm for~}0\leq k\leq 2p ~, \label{federham1}\\ 
\FHM{p}{k}(M) & \cong \HD{2n-k}(M)~, \qquad\qquad~\, {\rm for~}0\leq k\leq 2p~. \label{fderham2}
\end{align} 

Lastly, since the filtered cohomologies are associated with elliptic complexes, we can write down an elliptic laplacian for each filtered cohomology.  Note that the laplacians associated with the cohomologies $\FHP{p}{n+p}$ \eqref{fphm} and $\FHM{p}{n+p}$ \eqref{fphmm} are of fourth-order.  But since each laplacian is elliptic, we can nevertheless associate a Hodge theory to each cohomology.  That is, with the introduction of a Riemannian metric, we can define a unique harmonic representative for each cohomology class and Hodge decompose any form into three orthogonal components consisting of harmonic, exact and co-exact forms.  An expanded discussion of the Hodge theoretical properties for those filtered cohomologies that are primitive (i.e. $p=0$ or $k=n+p$) can be found in \cite{TY1,TY2}.

\subsection{Local Poincar\'e lemmata}
We now consider the above cohomologies for an open unit disk $U$ in $\BR^{2n}$ with the standard symplectic form $\omega = \sum_{i=1}^n dx^i\wedge dx^{n+i}$.  The primitive cohomologies $PH_{d\dl}^k(U)$ and $PH_{d+\dl}^k(U)$ have been calculated by \cite[Proposition 3.12 and Corollary 3.11]{TY2}:
\begin{align*}
\dim \PHPR{k}(U) &= \begin{cases}
1 &\text{ when }k=1 ~, \\
0 &\text{ otherwise } ;
\end{cases}
&\dim \PHPL{k}(U) &= \begin{cases}
1 &\text{ when }k=0 ~, \\
0 &\text{ otherwise } .
\end{cases}
\end{align*}

\begin{prop}[$\ddp$-Poincar\'e lemma]\label{Poincare_disk1}
Let $U$ be an open unit disk in $\BR^{2n}$ with the standard symplectic form $\omega = \sum dx^i\wedge dx^{n+i}$.  Then for $0\leq p< n$,
$$ \dim \FHP{p}{0}(U) = \dim \FHP{p}{2p+1}(U) = 1 ~, $$
and $\dim \FHP{p}{k}(U) = 0$ for $1\leq k\leq n+p-1$ and $k\neq 2p+1$.
\end{prop}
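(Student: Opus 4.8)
The plan is to identify the $\ddp$-cohomology along the top line of the complex \eqref{fecomplex} with the de Rham cohomology of $U$, which is acyclic in positive degree, by means of a short exact sequence of complexes.

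First I would record that the Lefschetz decomposition yields a short exact sequence of cochain complexes
\begin{align*}
0 \longrightarrow \bigl(\ker\pil{p},\ d\bigr) \longrightarrow \bigl(\CA^{\bullet},\ d\bigr) \xrightarrow{\ \pil{p}\ } \bigl(\FO{p}{\bullet},\ \ddp\bigr) \longrightarrow 0 ,
\end{align*}
with the convention $\FO{p}{k}=0$ for $k>n+p$, so that $\pil{p}$ is surjective and $\ker\pil{p}=\CA^k$ in those degrees. Exactness in each degree is the identity \eqref{relation1}. These are chain maps: $d$ preserves $\ker\pil{p}=L^{p+1}\CA^{\bullet-2p-2}$ because $d=\dpp+L\dpm$ cannot lower the power of $\om$, and $\pil{p}d=\ddp\,\pil{p}$ because $\ddp=\pil{p}d$ on $\FO{p}{\bullet}$ while $\pil{p}d$ kills $\ker\pil{p}$ by the same observation. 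Moreover, in degrees $k\leq n+p$ the map $L^{p+1}$ identifies $\ker\pil{p}\cap\CA^k$ with $\CA^{k-2p-2}$ compatibly with $d$ ($L^{p+1}$ being injective on $\CA^{k-2p-2}$ there by the $\mathfrak{sl}(2)$ structure), so the left-hand complex computes $\HD{\bullet-2p-2}(U)$ in the degrees that will matter. (For $0\leq p<n-1$ the rows of this sequence are those of Lemma \ref{ccomplex1} with $r=p+1$.)

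Next I would pass to the long exact sequence in cohomology, which for $0\leq k\leq n+p-1$ reads
\begin{align*}
\cdots\to\HD{k-2p-2}(U)\xrightarrow{\,L^{p+1}\,}\HD{k}(U)\xrightarrow{\ \pil{p}\ }\FHP{p}{k}(U)\xrightarrow{\ \delta\ }\HD{k-2p-1}(U)\xrightarrow{\,L^{p+1}\,}\HD{k+1}(U)\to\cdots .
\end{align*}
Since $U$ is an open disk, $\HD{j}(U)=\BR$ for $j=0$ and vanishes otherwise. Reading this off: for $k=0$ one gets $\FHP{p}{0}(U)\cong\HD{0}(U)\cong\BR$; for $k=2p+1$ (when $2p+1\leq n+p-1$) the group before $\delta$ is $\HD{2p+1}(U)=0$ and the map after it is $\HD{0}(U)\cong\BR\xrightarrow{L^{p+1}}\HD{2p+2}(U)=0$, so $\delta$ is an isomorphism and $\FHP{p}{2p+1}(U)\cong\BR$; and for any remaining $k$ with $1\leq k\leq n+p-1$, the group $\FHP{p}{k}(U)$ is trapped between two vanishing de Rham groups and hence is $0$. (For $k\leq 2p$ this is also immediate directly, since $\FO{p}{k}=\CA^k$ and $\ddp=d$ there.)

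Finally, the borderline value $p=n-1$, where $2p+1=n+p$ and $\FHP{p}{2p+1}$ is the \emph{top} cohomology of \eqref{fecomplex} rather than an ordinary $\ddp$-cohomology, is handled by the isomorphism \eqref{fphm}: $\FHP{n-1}{n+p}(U)\cong\PHPR{1}(U)$, which is one-dimensional by the computation of $\PHPR{k}(U)$ recalled just above \cite{TY2}; the remaining assertions for $p=n-1$ (the cases $k=0$ and $1\leq k\leq 2n-2$) follow from the same long exact sequence. I expect the one delicate point to be the bookkeeping near the top degree $n+p$: one must confirm that the identification of $H^{\bullet}(\ker\pil{p})$ with the shifted de Rham cohomology is valid through degree $n+p$ — the degree where $\FO{p}{\bullet}$ is truncated and the elliptic complex \eqref{fecomplex} folds onto its bottom line — so that the long exact sequence legitimately computes $\FHP{p}{k}(U)$ for $0\leq k\leq n+p-1$. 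Granting this, the rest is the classical Poincar\'e lemma together with a diagram chase.
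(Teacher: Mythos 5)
Your proof is correct, but it takes a genuinely different route from the paper's. The paper argues directly on forms: for a $\ddp$-closed $A_k$ with $k\geq 2p+1$ it splits into the cases $dA_k=0$ and $dA_k=L^{p+1}B'_{k-2p-1}\neq 0$, disposes of the first by the classical Poincar\'e lemma followed by the projection $\pil{p}$ (via \eqref{relation1}), and in the second invokes the primitive $d\dl$-Poincar\'e lemma of \cite{TY2} to write $B'=\dpp\dpm B''$ and correct $A_k$ by $L^{p+1}\dpm B''$; the exceptional degree $k=2p+1$ is exactly where $B'_0$ is forced to be a nonzero constant and the explicit generator $L^{p}(-\sum x^{n+i}dx^i)$ survives. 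You instead truncate $(\FO{p}{\bullet},\ddp)$ at degree $n+p$, fit it into a short exact sequence of complexes whose kernel is a shifted de Rham complex, and read everything off the long exact sequence -- essentially the mechanism of Section \ref{sec_Lef} (Lemma \ref{ccomplex1} together with Proposition \ref{etriple}) specialized to $U$, except that your truncation sidesteps the ``shift'' that forces the non-standard connecting maps there. This is legitimate because the proposition never needs the top group $\FHP{p}{n+p}$ except in the borderline case $p=n-1$, $2p+1=n+p$, which you correctly delegate to \eqref{fphm} and the recalled computation of $\PHPR{1}(U)$; and the identification $H^{m}(\ker\pil{p})\cong\HD{m-2p-2}(U)$ does persist through $m=n+p+1$, since $L^{p+1}:\CA^{m-2p-2}\to\CA^{m}$ remains injective in exactly that range, which is all the long exact sequence requires for $k\leq n+p-1$ -- so the delicate point you flag is fine. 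In exchange, the paper's argument is self-contained within Section 3 and produces explicit primitives at the cost of importing the primitive Poincar\'e lemma from \cite{TY2}, while yours needs only the classical Poincar\'e lemma (plus $\PHPR{1}(U)$ for one borderline case) and makes transparent why $k=2p+1$ is exceptional -- it is the unique degree where $\HD{k-2p-1}(U)$ fails to vanish -- but it front-loads homological machinery that the paper only develops in the following section.
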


\begin{proof}
When $0\leq k\leq 2p$, the cohomology $\FHP{p}{k}(U) = \HD{k}(U)$.  When $k\geq2p+1$, any element $A_k\in\FO{p}{k}$ has the Lefschetz decomposition $\sum_{s=0}^{\min(p,n+p-k)}L^{p-s}B_{k-2p+2s}$ for $B_{k-2p+2s}\in\CB^{k-2p+2s}$.  If $A_k$ is $\ddp$-closed, either (1) $dA_k=0$ or (2) $\ddp A_k=0$ but $dA_k = L^{p+1}B'_{k-2p-1} \neq0$ for some $B'_{k-2p-1}\in\CB^{k-2p-1}$.

Case (1):  The standard Poincar\'e lemma implies that $A_k = dA'_{k-1}$ for some $A'_{k-1}\in\CA^{k-1}$.   Let $A''_{k-1}= \pip A'_{k-1}$.  After taking $\pil{p}\circ d$ and using \eqref{relation1}, we find that $\ddp A''_{k-1} = A_{k}$.

Case (2a):  Let $2p+1 < k < n+p$.  Since $d^2 A_k = L^{p+1} dB'_{k-2p-1} = 0$ and $L^{p+1}$ is not zero on $\CB^{k-p}$, we have $dB'_{k-2p-1} = 0$.  It follows from the primitive Poincar\'e lemma \cite[Proposition 3.10]{TY2} that there exists a $B''_{k-2p-1}\in\CB^{k-2p-1}$ such that $\dpp\dpm B''_{k-2p-1} = B'_{k-2p-1}$.  
Note that $L^{p+1}\dpm B''_{k-2p-1}\notin\FO{p}{k}$ and
$$ d( A_k - L^{p+1}\dpm B''_{k-2p-1} ) = L^{p+1}B'_{k-2p-1} - L^{p+1}\dpp\dpm B''_{k-2p-1} = 0 ~. $$
The standard Poincar\'e lemma implies that $A_k - L^{p+1}\dpm B''_{k-2p-1} = dA'_{k-1}$ for some $A'_{k-1}\in\CA^{k-1}$.  Now let $A''_{k-1}=\pip A'_{k-1}$.  Then similar to case (1), we have  $\ddp A''_{k-1} = A_k$.

Case (2b):  Let $k = 2p+1$.  Since $dA_{2p+1} = L^{p+1}B'_0 \neq0$ and $d^2A_{2p+1} = L^{p+1} dB'_0 = 0$, $B'_0$ must be a nonzero constant function.  Since $(\FO{p}{2p}, \ddp) = (\CA^{2p},d)$, such $A_{2p+1}$ does not belong to $\ddp(\FO{p}{2p})$.  Because $B'_0$ is a constant, the argument of case (1) implies that $\dim \FHP{p}{2p+1}(U)\leq1$.  We finish the proof by taking $A_{2p+1} = L^{p}(-\sum x^{n+i} dx^i)$.
\end{proof}

\begin{prop}[$\ddm$-Poincar\'e lemma]
Let $U$ be an open unit disk in $\BR^{2n}$ with the standard symplectic form $\omega = \sum dx^i\wedge dx^{n+i}$.  Then for $0< p< n$ and $0\leq k\leq n+p-1$,
$$ \dim \FHM{p}{k}(U) = 0 ~. $$
\end{prop}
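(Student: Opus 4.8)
The plan is to mirror the proof of Proposition~\ref{Poincare_disk1}, using the reflection operator to trade $\ddm=\rstar d\rstar$ for the ordinary exterior derivative, and then to correct the resulting de~Rham primitive by a term manufactured from the primitive Poincar\'e lemma of \cite{TY2}. First, the low-degree range is free: for $0\leq k\leq 2p$, equation \eqref{fderham2} gives $\FHM{p}{k}(U)\cong\HD{2n-k}(U)$, and since $p<n$ we have $2n-k\geq 2n-2p\geq 2\geq 1$, so this de~Rham group vanishes. Thus it remains to handle $2p+1\leq k\leq n+p-1$ (a range that is empty unless $p\leq n-2$), and I would fix such a $k$ and an $A_k\in\FO{p}{k}$ with $\ddm A_k=0$.

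\textbf{Reflection and projection.} Since $\ddm=\rstar d\rstar$ and $\rstar$ is invertible, $\beta:=\rstar A_k\in\CA^{2n-k}$ is $d$-closed with $2n-k\geq 1$, so the ordinary Poincar\'e lemma on $U$ yields $\gamma\in\CA^{2n-k-1}$ with $d\gamma=\beta$, whence $A_k=\rstar\beta=\ddm(\rstar\gamma)$. The form $\rstar\gamma$ need not be $p$-filtered, so I would split it via \eqref{relation1} as $\rstar\gamma=\pip(\rstar\gamma)+L^{p+1}Z$ with $Z:=L^{-(p+1)}\rstar\gamma\in\CA^{k-2p-1}$, giving
$$ A_k-\ddm\big(\pip(\rstar\gamma)\big)=\ddm\big(L^{p+1}Z\big)~. $$
Because $\pip(\rstar\gamma)\in\FO{p}{k+1}$ and $\ddm$ preserves the filtration degree, the left side lies in $\FO{p}{k}$. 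On the other hand, writing $Z=\zeta_0+L\zeta_1+\cdots$ in its Lefschetz decomposition and applying $\ddm=\dpm+\dpp L^{-1}$ term by term gives $\ddm(L^{p+1}Z)=L^{p}\dpp\zeta_0+\sum_{s\geq0}L^{p+1+s}(\dpm\zeta_s+\dpp\zeta_{s+1})$. Comparing Lefschetz orders and using that $L^{p+1}$ is injective on $\CB^{k-2p-2}$ when $k\leq n+p-1$, the requirement that this expression be $p$-filtered forces $\dpm\zeta_0+\dpp\zeta_1=0$, so that $\ddm(L^{p+1}Z)=L^{p}\dpp\zeta_0$ with $\dpp\dpm\zeta_0=-\dpp\dpp\zeta_1=0$; since also $\dpp\dpp\zeta_0=0$, the primitive form $\dpp\zeta_0\in\CB^{k-2p}$ is both $\dpp$- and $\dpm$-closed.

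\textbf{Killing the defect.} Now I would invoke the known vanishing $\PHPL{k-2p}(U)=0$ (valid since $k-2p\geq 1$), which by \eqref{fphmm} supplies $C'\in\CB^{k-2p}$ with $\dpp\zeta_0=\dpp\dpm C'$. Setting $D':=-L^{p}\dpp C'\in\CL^{p,k+1-2p}\subset\FO{p}{k+1}$ and using $\dpm\dpp+\dpp\dpm=0$ on primitives of this degree together with $\dpp\dpp=0$, one checks $\ddm D'=-L^{p}\dpm\dpp C'=L^{p}\dpp\dpm C'=L^{p}\dpp\zeta_0$. Hence $A_k=\ddm\big(\pip(\rstar\gamma)+D'\big)$ with $\pip(\rstar\gamma)+D'\in\FO{p}{k+1}$, so $[A_k]=0$ in $\FHM{p}{k}(U)$, as desired.

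\textbf{Main obstacle.} The delicate point — the one I expect to require the most care — is the bookkeeping in the second step: showing that the defect $\ddm(L^{p+1}Z)$ collapses to the single primitive term $L^{p}\dpp\zeta_0$ rather than a genuinely non-primitive expression, which is precisely what makes the primitive Poincar\'e lemma / $\PHPL{}$-vanishing applicable. This rests on $\ddm$ being filtration-degree preserving and on the injectivity of the relevant powers of $L$ in degrees $\leq n+p-1$; once it is in place, the remainder is a routine application of the known disk computations for the primitive cohomologies.
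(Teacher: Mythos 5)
Your proposal is correct and follows essentially the same route as the paper: apply the $\rstar$-dual of the standard Poincar\'e lemma to write $A_k=\ddm A'_{k+1}$, project the potential by $\pil{p}$, observe that the resulting defect is $L^p$ times a $d$-closed primitive form, and absorb it using the vanishing of $\PHPL{k-2p}(U)$ (the primitive $d\dl$-Poincar\'e lemma). The only difference is presentational — you justify explicitly, via the Lefschetz decomposition of $\ddm(L^{p+1}Z)$ and the filtration constraint, what the paper asserts in one line, and your sign for the correction term $-L^p\dpp C'$ is the careful one.
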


\begin{proof}
When $0\leq k\leq2p$, it follows from \eqref{fderham2} that $\dim \FHM{p}{k}(U) =\dim \HD{2n-k}(U) =0$.  When $2p<k < n+p-1$, the $\rstar$-dual of the standard Poincar\'e lemma implies that any $\ddm$-closed $A_k\in\FO{p}{k}$ is equal to $\ddm A'_{k+1}$ for some $A'_{k+1}\in\CA^{k+1}$.  Let $A''_{k+1} = \pip A'_{k+1}$.  Then the difference between $A_k$ and $\ddm A''_{k+1}$ can be expressed as $A_k - \ddm A''_{k+1} = L^pB'_{k-2p}$ for some $B'_{k-2p}\in\CB^{k-2p}$.

Now we have $\ddm( A_k - \ddm A''_{k-1}) = \ddm (L^pB'_{k-2p}) = 0$.  By \eqref{dmsimp}, this implies $\dpp B'_{k-2p} = \dpm B'_{k-2p} = 0$, and equivalently $d B'_{k-2p}=0$.  Since $k-2p>0$, the primitive $d\dl$-Poincar\'e lemma \cite[Proposition 3.10]{TY2} says that there exists a $B''_{k-2p}\in\CB^{k-2p}$ such that $B'_{k-2p}=\dpp\dpm B''_{k-2p}$.  Therefore, we have $L^pB'_{k-2p} = \ddm (L^p \dpp B''_{k-2p})$ and  $A_k = \ddm(A''_{k+1}  - L^p \dpp B''_{k-2p})$.
\end{proof}

Let us note that the above proof does not work for $p=0$, which is the primitive $\dpm$-Poincar\'e lemma \cite[Proposition 3.14]{TY2}.  The argument fails because we cannot conclude that $dB'_{k-2p} = 0$ when $p=0$.  Moreover, when $p=n$, the elliptic complex (\ref{fecomplex}) simply consists of two de Rham complexes.

\begin{cor}\label{uindex}
Let $U$ be an open unit disk in $\BR^{2n}$ with the standard symplectic form $\omega = \sum dx^i\wedge dx^{n+i}$.  For $0\leq p\leq n$, the index of the elliptic complex (\ref{fecomplex}) is zero.
\end{cor}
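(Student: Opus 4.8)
The plan is to compute the index directly: it is the alternating sum of the dimensions of the cohomology groups of the complex, and all of those dimensions have been determined by the two Poincar\'e lemmata above (together with the endpoint cases recorded in \cite{TY1,TY2}).

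First I would unfold \eqref{fecomplex} into the single cochain complex $(\Fp^\bullet,d_\bullet)$ with differentials \eqref{diffdef}, whose terms $\Fp^j$ run over $0\le j\le 2n+2p+1$, so that $\mathrm{ind}\,\eqref{fecomplex}=\sum_{j=0}^{2n+2p+1}(-1)^j\dim H^j(\Fp)$. Reading off the slots, $H^j(\Fp)=\FHP{p}{j}(U)$ for $0\le j\le n+p$, while $H^j(\Fp)=\FHM{p}{k}(U)$ with $k=2n+2p+1-j$ for $n+p+1\le j\le 2n+2p+1$. Since $2n+2p+1$ is odd, the ``bottom'' copy of $\FO{p}{k}$ sits in a slot of parity opposite to $k$, so the index collapses to
\begin{align*}
\mathrm{ind}\,\eqref{fecomplex}\;=\;\sum_{k=0}^{n+p}(-1)^k\big(\dim\FHP{p}{k}(U)-\dim\FHM{p}{k}(U)\big)~.
\end{align*}

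Next I would substitute the known dimensions. For $0<p<n$: the $\ddm$-Poincar\'e lemma gives $\FHM{p}{k}(U)=0$ for $0\le k\le n+p-1$, and \eqref{fphmm} with $\dim\PHPL{n-p}(U)=0$ (valid as $n-p\ge1$) gives $\FHM{p}{n+p}(U)=0$, so the entire $\FHM$-part drops out; the $\ddp$-Poincar\'e lemma gives $\dim\FHP{p}{0}(U)=\dim\FHP{p}{2p+1}(U)=1$ with the other $\FHP{p}{k}(U)=0$ for $1\le k\le n+p-1$, and \eqref{fphm} with $\dim\PHPR{n-p}(U)=0$ for $n-p\ge2$ gives $\FHP{p}{n+p}(U)=0$ unless $p=n-1$. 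Thus when $p\le n-2$ the sum is $(-1)^0+(-1)^{2p+1}=0$; and when $p=n-1$ the two special degrees coincide ($2p+1=n+p$), with $\FHP{p}{n+p}(U)\cong\PHPR{1}(U)$ one-dimensional, giving $(-1)^0+(-1)^{n+p}=1+(-1)^{2n-1}=0$. For the endpoints: when $p=n$ the complex \eqref{fecomplex} is two copies of the de Rham complex (one reflected by $\rstar$) on the contractible $U$, contributing $+1$ at slot $0$ and $-1$ at the odd slot $2n+1$; when $p=0$ it is the primitive complex \eqref{elliptic_cpx1}, and inserting the disk values of $\PHDP{k}(U)$, $\PHDM{k}(U)$, $\PHPR{n}(U)$, $\PHPL{n}(U)$ from \cite{TY2} again leaves $1-1=0$.

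I expect the only real obstacle to be the bookkeeping: assigning each of the $2n+2p+2$ cohomology slots of the unfolded complex its correct degree and sign, and carefully handling the coincidences of special degrees ($p=n-1$) and of which Poincar\'e lemma applies ($p=0$ and $p=n$). There is no analytic difficulty, since all the required vanishing is precisely what the Poincar\'e lemmata already provide.
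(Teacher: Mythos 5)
Your proposal is correct and follows exactly the route the paper intends: the corollary is stated immediately after the two Poincar\'e lemmata precisely because the index is the alternating sum of the cohomology dimensions they compute, and the two surviving one-dimensional groups ($\FHP{p}{0}(U)$ in even degree $0$ and $\FHP{p}{2p+1}(U)$ in odd degree $2p+1$, merging into $\PHPR{1}(U)$ at degree $n+p$ when $p=n-1$) cancel in the signed sum, with the remaining contributions vanishing by the $\ddm$-lemma, by $\dim\PHPR{n-p}(U)=\dim\PHPL{n-p}(U)=0$ in the relevant range, and by the degenerate descriptions of the complex at $p=0$ and $p=n$. Your write-up simply makes explicit the grading/sign bookkeeping and the edge cases that the paper leaves implicit.
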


\section{Filtered cohomologies and Lefschetz maps}\label{sec_Lef}
Let $(M,\omega)$ be a compact symplectic manifold of dimension $2n$.  Recall that the \emph{strong Lefschetz property} means that the map
\begin{align*}
L^{k}: \HD{n-k}(M) \to \HD{n+k}(M)
\end{align*}
is an isomorphism for all $k\in\{0,1,\ldots,n\}$.  It is known that the strong Lefschetz property is equivalent to what we call the $d\dl$-lemma \cite{Merkulov, Guillemin, TY1}.  In general, the strong Lefschetz property does not hold for a non-K\"ahler symplectic manifold.  

We would like to analyze the kernel and cokernel of an arbitrary Lefschetz map, $L^r$.   Certain aspects of Lefschetz maps have appeared in the literature previously.  In four dimensions, Baldridge and Li \cite{BL} identified the symplectic invariant $\ker[L:\HD{1} \to \HD{3}]$ and called it the degeneracy.    Lefschetz maps in higher dimensions were also discussed for instance in \cite{FId, Iban}.

The commutative diagram of short exact sequences of Lemma \ref{ccomplex1} and Lemma \ref{ccomplex2} is suggestive of a long exact sequence involving Lefschetz maps.  However, the main challenge and novelty remains with the shifts in the diagram.
In order to maintain a continuous long exact sequence and also take into account of the shift, cohomologies involving 2nd-order differential operators must be introduced.   In this regard, these shifts provide a natural explanation for why cohomologies like $\PHPR{}$  and $\PHPL{}$ involving $\dpp\dpm$ operators are natural for symplectic manifolds.

\subsection{Long exact sequences}

In the following proposition, we explain how to treat the shift in the commutative diagrams of Lemma \ref{ccomplex1} and Lemma \ref{ccomplex2}

\begin{prop}\label{etriple}
Given the cochain complexes
\begin{align*}
\xymatrix
@R-10pt
@H+7pt
{
& & \vdots \ar[d]_-\pad & \vdots  \ar[d]_-\pae & \vdots \ar[d]_-{\paf} & \\
&\; 0 \ar[r] &\; D^{l-2}\ar[r]^-\phi \ar[d]_-\pad & \; E^{l-2} \ar[r]^-\psi \ar[d]_-\pae &\;F^{l-2} \ar[r] \ar[d]_-{\paf} &\;0\\
&\; 0 \ar[r] &\; D^{l-1}\ar[r]^-\phi \ar[d]_-\pad & \; E^{l-1} \ar[r]^-\psi \ar[d]_-\pae &\; F^{l-1} \ar[r] &\; 0\\
&\; 0\ar[r] &\; D^{l} \ar[r]^-\phi \ar[d]_-\pad & \; E^{l} \ar[r] \ar[d]_-\pae &\; 0 &\\
0\ar[r] &\; C^{l+1} \ar[r]^-{\rho} \ar[d]_-{\pac}&\; D^{l+1} \ar[r]^-{\phi} \ar[d]_-\pad &\; D^{l+1}\ar[r] \ar[d]_-\pae &\; 0 &  \\
0\ar[r] &\; C^{l+2} \ar[r]^-\rho \ar[d]_-{\pac}  &\; D^{l+2} \ar[r]^-\phi \ar[d]_-\pad  &\; E^{l+2}\ar[r] \ar[d]_-\pae   &\; 0 &  \\
& \vdots & \vdots  & \vdots &  & \\
}
\end{align*}
such that
$$ \rho\, \phi =0~,\qquad\qquad \phi\,\psi=0~,$$
$$\rho \, \pac = \pad\, \rho~, \qquad \phi\,\pad=\pae\,\phi~, \qquad \psi\,\pae = \paf\,\psi~,$$
there is a long exact sequence of cohomology
\begin{align*} 
\xymatrix
@R-10pt
@H+18pt
{
&\; \ldots \ar[r] &\;H^{l-1}(D) \ar[r]^\phis  &\;H^{l-1}(E) \ar[r]^\psis  &\; H^{l-1}(F) \ar`r[d]`d[l]`l^d[dlll]`d^r[dll][dll]^-{\paes} \\
& &\;H^{l}(D) \ar[r]^\phis  &\;H^{l}(E) \ar`r[d]`d[l]`l^d[dlll]`d^r[dll][dll]^-{\pads} 
&\\
&\;H^{l+1}(C) \ar[r]^\rhos  &\;H^{l+1}(D) \ar[r]^\phis  &\;H^{l+1}(E) \ar[r] & \; \ldots 
\\
} 
\end{align*}
where $\pads$ and $\paes$ are induced by the derivative operators $\pad$ and $\pae$, respectively, and except for the two cohomologies $H^{l-1}(F)$ and $H^{l+1}(C)$ which are defined as follows  
\begin{align*}
H^{l-1}(F) &= \frac{\ker (\pad\paes) \cap F^{l-1}}{\im \paf \cap F^{l-1}}~, \\
H^{l+1}(C) & = \frac{\ker \pac \cap C^{l+1}}{\im (\pads \pae) \cap C^{l+1}}~, 
\end{align*}
the other cohomologies are standardly defined, for instance,
$$H^*(D) = \frac{\ker \pad}{\im \pad}~.$$
\end{prop}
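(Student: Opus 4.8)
The plan is to run the usual snake-lemma/zigzag machinery, but carefully re-routing it across the shift in the middle of the diagram where the middle column $E^*$ ``skips'' two rows. First I would recall that each of the three columns $(C^*,\pac)$, $(D^*,\pad)$, $(E^*,\pae)$, $(F^*,\paf)$ is a cochain complex and that the rows are short exact sequences, \emph{except} near degree $l$, where the diagram reads $0\to D^l\to E^l\to 0$ (so $\phi$ is an isomorphism $D^l\xrightarrow{\sim}E^l$ in that one spot) and $0\to C^{l+1}\to D^{l+1}\to E^{l+1}\to 0$ picks up the new complex $C^*$. Away from the shift, i.e. for $*\le l-1$ the triple $(D,E,F)$ is an honest short exact sequence of complexes and for $*\ge l+1$ the triple $(C,D,E)$ is, so the standard long exact sequences $\cdots\to H^{*}(D)\xrightarrow{\phis}H^{*}(E)\xrightarrow{\psis}H^{*}(F)\xrightarrow{\paes}H^{*+1}(D)\to\cdots$ and $\cdots\to H^{*}(C)\xrightarrow{\rhos}H^{*}(D)\xrightarrow{\phis}H^{*}(E)\to\cdots$ hold in those ranges with the connecting maps $\paes,\pads$ the ones induced by $\pae,\pad$; these are immediate from the snake lemma and I would just cite it.

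The real work is to splice these two half-infinite sequences together across degrees $l-1,l,l+1$ and to identify the two ``exceptional'' cohomology groups $H^{l-1}(F)$ and $H^{l+1}(C)$. I would proceed in three local steps. \emph{(a)} At $H^{l-1}(F)$: the connecting map to $H^l(D)$ should be $\pad\paes$ — lift a $\paf$-closed $f\in F^{l-1}$ to $e\in E^{l-1}$, apply $\pae$ and note $\psi(\pae e)=\paf f=0$ so $\pae e=\phi(d)$ for a unique $d\in D^l$, then push $d$ forward by $\pad$ into $D^{l+1}$; closedness of the resulting class forces $\pad\paes f=0$, which is exactly the numerator condition $\ker(\pad\paes)$ in the stated definition of $H^{l-1}(F)$, and the denominator $\im\paf\cap F^{l-1}$ is forced by well-definedness. \emph{(b)} At $H^l(D)$: since $\phi\colon D^l\to E^l$ is an isomorphism, $H^l(D)\cong$ (the relevant subquotient of $E^l$), and I would check the sequence $H^{l-1}(F)\xrightarrow{\pad\paes}H^l(D)\xrightarrow{\rhos?}\cdots$ — actually the outgoing map from $H^l(D)$ lands in $H^{l+1}(C)$ and is induced by: take $\pad$-closed $d\in D^l$, form $\pad d\in D^{l+1}$, which maps to $0$ in $E^{l+1}$ (because $\phi(\pad d)=\pae\phi d=\pae(\pae(\phi\circ(\text{iso})^{-1}\cdots))$ — more simply, $d$ is $\pad$-closed so $\pad d=0$; I would instead use that $\phi d\in E^l$ need not be $\pae$-closed, and $\pae(\phi d)=\phi(\pad d)=0$, so $\phi d$ is closed — the subtlety is which complex's differential to use). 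This is where I expect to have to be most careful: the middle column at degrees $l,l+1$ uses $\pae$ on $E$ but $E^l\cong D^l$, so I must track whether a class in $H^l(D)$ maps forward via $\pae$ (i.e. is it $\ker\pae/\im\phi^{-1}\pae$?) or via $\pad$. \emph{(c)} At $H^{l+1}(C)$: dually to (a), an incoming class from $H^l(E)\cong H^l(D)$ is represented by $d\in D^l$ with $\pae(\phi d)=0$, i.e. $\phi(\pad d)=0$, i.e. $\pad d=\rho(c)$ for $c\in C^{l+1}$; one checks $\pac c=0$ and that the ambiguity is exactly $\im(\pads\pae)$, matching the stated $H^{l+1}(C)=\ker\pac\cap C^{l+1}\big/\im(\pads\pae)\cap C^{l+1}$. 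Then $H^{l+1}(C)\xrightarrow{\rhos}H^{l+1}(D)$ continues into the second half-sequence.

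The main obstacle, as signalled above, is bookkeeping at the shift: one must verify exactness at each of the three joint spots $H^{l-1}(F)$, $H^l(D)$, $H^{l+1}(C)$ by a diagram chase, and in particular confirm that the nonstandard quotients in the definitions of $H^{l-1}(F)$ and $H^{l+1}(C)$ are precisely what make the sequence exact there — the numerator $\ker(\pad\paes)$ (resp.\ $\ker\pac$) is the kernel of the outgoing map, and the denominator $\im\paf$ (resp.\ $\im(\pads\pae)$) is the image of the incoming map. I would organize this as: first establish exactness in the stable ranges by citing the snake lemma, then perform the three local chases, each time exhibiting explicit representatives and using only the hypotheses $\rho\phi=0$, $\phi\psi=0$, and the commutation relations $\rho\pac=\pad\rho$, $\phi\pad=\pae\phi$, $\psi\pae=\paf\psi$ together with row-exactness. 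Since $\phi\colon D^l\to E^l$ is an isomorphism and $E^l$ appears only once, no genuinely new phenomenon occurs beyond careful sign/index tracking, so the argument is elementary but must be written out spot by spot.
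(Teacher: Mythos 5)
Your overall architecture --- define the two modified connecting maps $\paes$ and $\pads$ by the lift/differentiate/pull-back recipe, observe that the nonstandard numerator of $H^{l-1}(F)$ and denominator of $H^{l+1}(C)$ are exactly what make these maps land in closed classes and be well defined, cite the standard snake-lemma chases away from the shift, and do local chases at the splice --- is the same as the paper's. Your steps (a) and (c) match the paper's constructions of $\paes$ and $\pads$ essentially verbatim, including the key observation that everything hinges on $\phi\colon D^l\to E^l$ being bijective.

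However, step (b) as written garbles the splice, and you flag but do not resolve the confusion. The sequence there reads
$$H^{l-1}(F)\xrightarrow{\;\paes\;}H^{l}(D)\xrightarrow{\;\phis\;}H^{l}(E)\xrightarrow{\;\pads\;}H^{l+1}(C)~,$$
so the connecting map out of $H^{l-1}(F)$ is $\paes$ (the composite $\pad\paes$ is the closedness condition defining the numerator, not the map itself), and the map out of $H^l(D)$ is $\phis$ into $H^l(E)$ --- it does not land in $H^{l+1}(C)$. Both $H^l(D)$ and $H^l(E)$ appear, each standardly defined with its own differential, and $\phis$ between them need be neither injective nor surjective on cohomology; exactness at those two spots is precisely the statement $\ker\phis=\im\paes$ and $\im\phis=\ker\pads$. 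More substantively, your list of joint spots omits exactness at $H^l(E)$, which is the one place the paper singles out as \emph{not} following from the standard chase: there is no $C^l$, so the usual move of absorbing the ambiguity into a coboundary from the previous $C$-term is unavailable. Instead, $\pads[e_l]=0$ means $c_{l+1}=\pads\pae\,e_{l-1}$ for some $e_{l-1}\in E^{l-1}$ --- this is where the nonstandard denominator $\im(\pads\pae)$ earns its keep --- and one must exhibit the corrected representative $d_l-\phi^{-1}\pae\, e_{l-1}\in\ker\pad$ with $\phis[d_l-\phi^{-1}\pae\, e_{l-1}]=[e_l]$. Your closing claim that ``no genuinely new phenomenon occurs beyond careful sign/index tracking'' is therefore too optimistic; this one verification requires an argument that is not a routine instance of the snake lemma, and your plan as stated would miss it.
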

\begin{proof}
We first define the operators:

(1) Definition of $\pads$.  Let $e_l \in H^l(E)$.  Choose a $d_l \in D^l$ such that $\phi(d_l) = e_l$.  Then there exists $c_{l+1}\in C^{l+1}$ such that $\rho(c_{l+1})=\pad d_l$.   We therefore define
$$\pads[e_l] = [c_{l+1}]~.$$

That $\pads$ defines a homomorphism should be self-evident.  Let us show though that $\pads$ is well-defined.  That is, we want to show that if $e_l$ and $e'_l$ are cohomologous in $H^l(E)$, then the corresponding $c_{l+1}$ and $c'_{l+1}$ are also cohomologous in $H^{l+1}(C)$.  Here, we will see that the non-standard definition of $H^{l+1}(C)$ becomes important. 

Since, $e_l$ and $e'_l$ are cohomologous, we can write
$$ e_l = e'_l + \pae \, e_{l-1}$$
for some $e_{l-1}\in E^{l-1}$.  Note in general, $\psi(e_{l-1}) \neq 0$.  Now by surjectivity, there exist $d_l, d'_l, {\tilde d}_l\in D^l$ such that $\phi(d_l) = \phi(d'_l) + \pae\, e_{l-1}$ and therefore,
$$\phi(d_l - d'_l) = \pae\, e_{l-1} = \phi \,{\tilde d}_l~.$$
Clearly, $\pad(d_l - d'_l) = \pad\, {\tilde d}_l$.  With $\pad\,d_l = \rho \, c_{l+1}$ and $ \pad\,d'_l = \rho \, c'_{l+1}$, we then have
\begin{align*}
\rho(c_{l+1} - c'_{l+1}) & = \pad {\tilde d}_l = \pad (\phi^{-1} \pae {\tilde e}_{l-1})\\
& =\rho ( \rho^{-1} \pad \phi^{-1} \pae {\tilde e}_{l-1})
\end{align*}
By the injectivity of $\rho$, this shows that $c_{l+1}$ and $c'_{l+1}$ are cohomologous.

(2) Definition of $\paes$.  Let $f_{l-1}\in H^{l-1}(F)$.  Choose an $e_{l-1}\in E^{l-1}$ such that $\psi(e_{l-1})=f_{l-1}$.  Then there exists an $d_l\in D^l$ such that $\phi(d_l) = \pae\, e_{l-1}$.  We therefore define 
$$\paes[f_{l-1}]=[d_l]~.$$
It follows from standard arguments that $\paes$ is a well-defined homomorphism.

(3) Definition of $H^{l+1}(C)$ and $H^{l-1}(F)$.  Let us show that both 
(a) $\im (\pads \pae) \cap C^{l+1}$ and (b) $\ker (\pad\paes) \cap F^{l-1}$ are well-defined.  To show this, it is important that the map $\phi$ at degree $l$, $\phi: D^l \to E^l$, is bijective, and thus $\phi^{-1}$ is well-defined.  For (a), notice here that $\pads=\rho^{-1}\pad\phi^{-1}: E^l \to C^{l+1}$ is well-defined only if $e_l\in E^l$ is $\pae$-closed.  Hence, $\pads\pae$ is well-defined.  For (b), $\paes = \phi^{-1}\pae \psi^{-1}: F^{l-1} \to D^l$ is only defined up to a $\pad$-exact term.  Hence, $\pad\paes$ is well-defined.

Proving the exactness of the cohomology sequence follows the standard diagram-chasing arguments.  Indeed, all standard arguments can be applied to this case with the exception of the exactness at $H^l(E)$, which we will give a proof here.

Firstly, at $H^l(E)$, it is clear that $\im \subset \ker$ since $\pads \, \phis = 0$.  So we need to show also that $\ker \subset \im$.  So consider the case when $e_l \in H^l(E)$ maps to the trivial element, i.e., $\pads\, e_l = \rho^{-1} \pad \phi^{-1} \pae\, e_{l-1}= [0] \in H^{l+1}(C)$.  In this case, there exist a $d_l \in D^l$ and a $c_{l+1}\in C^{l+1}$ such that $\phi(d_l) = e_l$ and $\rho(c_{l+1}) = \pad\, d_l$.  Then it is clear that $\pad (d_l - \phi^{-1}\pae\, e_{l-1})=0$, and hence, $(d_l - \phi^{-1}\pae\, e_{l-1})$ is an element of $H^l(D)$.  Moreover, we have
\begin{align*}
\phis[d_l - \phi^{-1}\pae\, e_{l-1}] = [e_l - \pae\, e_{l-1}] = [e_l]~.
\end{align*}
This completes the proof of the proposition.
\end{proof}

\subsection{Resolution of Lefschetz maps}

With the chain of short exact sequences of Lemma \ref{ccomplex1} and now Proposition \ref{etriple}, we obtain the following long exact sequence relating filtered cohomologies and Lefschetz maps.
\begin{thm}\label{resolution}
Let $(M,\omega)$ be a symplectic manifold of dimension $2n$, which needs not be compact.  Then, the following sequence is exact for any $1\leq r\leq n$:
\begin{align*}
\xymatrix
@C+20pt
{
&\; 0 \ar[r] &\; \HD{2r-1}(M) \ar[r]^-{\pil{r-1}} & \;\FHP{r-1}{2r-1}(M) \ar`[d]`[l]`[dlll]`[dll][dll]^-{L^{-r}d}
\\
&\;\HD{0}(M) \ar[r]^-{L^{r}}  &\;\HD{2r}(M) \ar[r]^-{\pil{r-1}}  &\;\FHP{r-1}{2r}(M) \ar@{-->}`[d]`[l]`[dlll]`[dll][dll]^-{L^{-r}d}\\
&\;\HD{n-r-2}(M) \ar[r]^-{L^{r}}  &\;\HD{n+r-2}(M) \ar[r]^-{\pil{r-1}}  &\;\FHP{r-1}{n+r-2}(M) 
 \ar`[d]`[l]`[dlll]`[dll][dll]^-{L^{-r}d} \\
&\;\HD{n-r-1}(M) \ar[r]^-{L^{r}}  &\;\HD{n+r-1}(M) \ar[r]^-{\pil{r-1}}  &\;\FHP{r-1}{n+r-1}(M) \ar`[d]`[l]`[dlll]`[dll][dll]^-{L^{-r}d}\\
 &\;\HD{n-r}(M) \ar[r]^-{L^{r}}  &\;\HD{n+r}(M) \ar[r]^-{\pil{r-1}\rstar d\, L^{-r}} &\;\FHM{r-1}{n+r-1}(M)\ar`[d]`[l]`[dlll]`[dll][dll]^-{\rstar}\\
&\;\HD{n-r+1}(M) \ar[r]^-{L^{r}}  &\;\HD{n+r+1}(M) \ar[r]^-{\pil{r-1}\rstar d\, L^{-r}} &\; \FHM{r-1}{n+r-2}(M)\ar`[d]`[l]`[dlll]`[dll][dll]^-\rstar \\
&\;\HD{n-r+2}(M) \ar[r]^-{L^{r}}  &\;\HD{n+r+2}(M) \ar[r]^-{\pil{r-1}\rstar d\, L^{-r}} &\; \FHM{r-1}{n+r-3}(M) \ar@{-->}`[d]`[l]`[dlll]`[dll][dll]^-\rstar \\
&\;\HD{2n-2r}(M) \ar[r]^-{L^{r}}  &\;\HD{2n}(M)\ar[r]^-{\pil{r-1}\rstar d\, L^{-r}} &\; \FHM{r-1}{2r-1}(M) \ar`[d]`[l]`[dlll]`[dll][dll]^-\rstar\\
&\;\HD{2n-2r+1}(M) \ar[r] &\; 0 &\\
}\end{align*}

In other words, the $(r-1)$-filtered cohomologies give a resolution of the Lefschetz map $L^{r}$.
\end{thm}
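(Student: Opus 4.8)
The plan is to obtain the asserted long exact sequence by applying Proposition \ref{etriple} to the commutative ladder of short exact sequences provided by Lemma \ref{ccomplex1} (stated there for $1\le r<n$; the extremal case $r=n$, where $L^n$ is nontrivial only as $\HD{0}\to\HD{2n}$, is dispatched afterwards by a short direct argument using \eqref{fphm}--\eqref{fphmm}). First I would fix the dictionary between the abstract data $(C^*,D^*,E^*,F^*;\rho,\phi,\psi)$ of Proposition \ref{etriple} and the columns of that ladder: $D^*$ and $E^*$ are both the de Rham complex $(\CA^*,d)$, joined by $\phi=L^r$ (which raises cohomological degree by $2r$); $F^*$ is the $\ddp$-half of the filtered elliptic complex \eqref{fecomplex}, i.e. $(\FO{r-1}{k},\ddp)$ for $0\le k\le n+r-1$, with $\psi=\pil{r-1}$; and $C^*$ is the $\ddm$-half, $(\FO{r-1}{k},\ddm)$ for $0\le k\le n+r-1$, with $\rho=\rstar$. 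The shift of Proposition \ref{etriple} sits exactly at the row $0\to\CA^{n-r}\xrightarrow{L^r}\CA^{n+r}\to0$, where $L^r$ is a bundle isomorphism; consequently the two ``boundary'' degrees $l\mp 1$ of that proposition both land on $\FO{r-1}{n+r-1}$.

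Next I would verify the hypotheses of Proposition \ref{etriple}. That the rows are short exact and the squares commute is precisely the content of Lemma \ref{ccomplex1}. Of the compatibility relations: $\psi\phi=\pil{r-1}L^r=0$ because $L^rA$ has all its Lefschetz components of order $\ge r$; $\phi\rho=L^r\rstar=0$ on $\FO{r-1}{*}$ is \eqref{pistarf} with $p=r-1$; and the commutations with the differentials reduce to $dL^r=L^rd$, to $\rstar\ddm=d\rstar$ (immediate from $\ddm=\rstar d\rstar$ and $(\rstar)^2=\bo$), and to $\pil{r-1}d=\ddp\pil{r-1}$ on $\CA^*$ (since $d(A-\pil{r-1}A)$ again has only Lefschetz components of order $\ge r$, hence is killed by $\pil{r-1}$). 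Proposition \ref{etriple} then delivers a long exact sequence, which unrolls into the triangular arrangement of the statement. I would then identify $H^*(D)=H^*(E)=\HD{*}$; away from the transition degree, $H^*(F)=\FHP{r-1}{*}$ and $H^*(C)=\FHM{r-1}{*}$; and for $*\le 2r-1$ these last two reduce to ordinary de Rham groups by \eqref{federham1}--\eqref{fderham2}, which is what collapses the ends of the sequence into $0\to\HD{2r-1}\to\FHP{r-1}{2r-1}$ and $\HD{2n-2r+1}\to0$. Feeding the natural choices $\psi^{-1}=(\FO{r-1}{k}\hookrightarrow\CA^k)$ and $\phi^{-1}=L^{-r}$ into the connecting maps $\paes=\phi^{-1}\pae\psi^{-1}$ and $\pads=\rho^{-1}\pad\phi^{-1}$ of Proposition \ref{etriple} then reproduces the operators $L^{-r}d$, $\rstar$, and $\pil{r-1}\rstar d\,L^{-r}$ displayed in the statement.

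The place where the real content of the theorem sits — and the step I expect to require the most care — is the transition degree $n+r-1$, where Proposition \ref{etriple} hands back the two non-standard groups
$$H^{l-1}(F)=\frac{\ker(\pad\paes)\cap\FO{r-1}{n+r-1}}{\ddp\FO{r-1}{n+r-2}},\qquad H^{l+1}(C)=\frac{\ker\ddm\cap\FO{r-1}{n+r-1}}{\im(\pads\pae)\cap\FO{r-1}{n+r-1}},$$
which must be matched with $\FHP{r-1}{n+r-1}$ and $\FHM{r-1}{n+r-1}$ as defined after \eqref{fecomplex}; these are exactly the two cohomologies whose definitions involve the second-order operator $\dpp\dpm$. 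Using $\FO{r-1}{n+r-1}=L^{r-1}\CB^{n-r+1}$, the Lefschetz-type decomposition \eqref{dcomp} of $d$, the formula \eqref{dmsimp} for $\ddm$, and Lemma \ref{Leibniz02}, I would compute the composites $\pad\paes$ and $\pads\pae$ on $\FO{r-1}{n+r-1}$ and show (writing $A=L^{r-1}B$, $B\in\CB^{n-r+1}$) that $\pad\paes(A)=L^{-(r-1)}$-class of $\dpp\dpm B$, whence $\ker(\pad\paes)=\ker(\dpp\dpm)$, and likewise that $\im(\pads\pae)=\dpp\dpm\,\FO{r-1}{n+r-1}$, using $\dpp^2=\dpm^2=0$ to kill the ``spurious'' terms. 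This is precisely the mechanism, flagged in the opening of Section \ref{sec_Lef}, by which the shift in the ladder forces $\dpp\dpm$ to appear. Once every term and map of the long exact sequence is so identified, the sequence is the claimed two-sided resolution of $L^r$; splitting it at each node into the short exact sequence built from the kernel and cokernel of $L^r$ then yields the concluding ``resolution'' assertion.
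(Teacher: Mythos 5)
Your proposal is correct and follows essentially the same route as the paper: apply Proposition \ref{etriple} to the ladder of Lemma \ref{ccomplex1} with the dictionary $\rho=\rstar$, $\phi=L^r$, $\psi=\pil{r-1}$, $\pac=\ddm$, $\pad=\pae=d$, $\paf=\ddp$, and then identify the two non-standard groups at the transition degree with $\FHP{r-1}{n+r-1}$ and $\FHM{r-1}{n+r-1}$ via \eqref{fphm}--\eqref{fphmm}. The only difference is one of exposition — you spell out the verification of the hypotheses and the $\dpp\dpm$ computation at degree $n+r-1$, which the paper compresses into the citations of Lemma \ref{ccomplex1} and \eqref{fphm}--\eqref{fphmm}.
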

\begin{proof}
The theorem follows from the short exact sequences of Lemma \ref{ccomplex1} and Theorem \ref{etriple} with the following identifications
$$\rho = \rstar~, \qquad \phi = L^r~, \qquad \psi=\pil{r-1}~,$$
and
 $$ \pac = \ddm~,\qquad \pad=\pae=d~,\qquad \paf = \ddp~.$$
\begin{align} \label{longexactseq}\xymatrix{
&    &\; 0 \ar[r] &\; \HD{2r-1}(M) \ar[r]^-{\pil{r-1}}  &\;\FHP{r-1}{2r-1}(M)\ar`[d]`[l]`[dlll]`[dll][dll]^-{L^{-r}d}  \\
&  &\;\HD{0}(M) \ar[r]^-{L^{r}}  &\;\HD{2r}(M) \ar[r]^-{\pil{r-1}}  &\;\FHP{r-1}{2r}(M) \ar@{-->}`[d]`[l]`[dlll]`[dll][dll]^-{L^{-r}d}\\
& &\;\HD{n-r-2}(M) \ar[r]^-{L^{r}}  &\;\HD{n+r-2}(M) \ar[r]^-{\pil{r-1}}  &\;\FHP{r-1}{n+r-2}(M) \ar`[d]`[l]`[dlll]`[dll][dll]^-{L^{-r}d} \\
& &\;\HD{n-r-1}(M) \ar[r]^-{L^{r}}  &\;\HD{n+r-1}(M) \ar[r]^-{\pil{r-1}}  &\;L^{r-1}\PHPR{n-r+1}(M) \ar`[d]`[l]`[dlll]`[dll][dll]^-{L^{-r}d} \\
& &\;\HD{n-r}(M) \ar[r]^-{L^{r}}  &\;\HD{n+r}(M) \ar`[d]`[l]^-{\pil{r-1} \rstar d\, L^{-r}}`[dlll]`[dll][dll] &\\
&\;L^{r-1}\PHPL{n-r+1}(M) \ar[r]^-\rstar  &\;\HD{n-r+1}(M) \ar[r]^-{L^{r}}  &\;\HD{n+r+1}(M) \ar`[d]`[l]^-{\pil{r-1} \rstar d\, L^{-r}}`[dlll]`[dll][dll] &\\
&\;\FHM{r-1}{n+r-2}(M) \ar[r]^-\rstar  &\;\HD{n-r+2}(M) \ar[r]^-{L^{r}}  &\;\HD{n+r+2}(M) \ar@{-->}`[d]`[l]^-{\pil{r-1} \rstar d\, L^{-r}}`[dlll]`[dll][dll] &\\
&\;\FHM{r-1}{2r}(M) \ar[r]^-\rstar  &\;\HD{2n-2r}(M) \ar[r]^-{L^{r}}  &\;\HD{2n}(M)\ar`[d]`[l]^-{\pil{r-1} \rstar d\, L^{-r}}`[dlll]`[dll][dll]& \\
&\;\FHM{r-1}{2r-1}(M) \ar[r]^-\rstar  &\;\HD{2n-2r+1}(M) \ar[r] & \; 0 &
} \end{align}
The long exact sequence is obtained noting that  $\FO{r-1}{n+r-1} = L^{r-1}\CB^{n-r+1}\cong\CB^{n-r+1}$ and also \eqref{fphm}--\eqref{fphmm}
\begin{align*}
\FHP{r-1}{n+r-1}(M)= L^{r-1}\PHPL{n-r+1}(M)~, \qquad \FHM{r-1}{n+r-1}(M)= L^{k-1}\PHPR{n-r+1}(M)~.
\end{align*}
This completes the proof of the theorem.
\end{proof}

We emphasize that the above theorem with its long exact sequence follows directly from the chain of short exact sequences which are all algebraic in nature.  Therefore, the theorem certainly holds true for differential forms of any type of support, e.g. compact or $L^2$, and for both closed and open symplectic manifolds.  Furthermore, as described in the Introduction, Theorem \ref{resolution} can be expressed very concisely in terms of the following exact triangle:
\begin{align}
\xymatrix{
 &\; F^{r-1}H^*(M) \ar[ld] &  \\
 \HD{*}(M) \ar[rr]^-{L^r} & & \HD{*}(M)\ar[lu]}\label{Tpth}
\end{align}
Here, $F^{r-1}H^*(M)$ represents exactly the filtered cohomologies in \eqref{fecomplexcoh} associated with the filtered elliptic complex \eqref{fecomplex} with $p=r-1$.

Now we have obtained the exact triangle \eqref{Tpth} starting from the chain of short exact sequences in Lemma \ref{ccomplex1}.  In fact, we have written down another chain of short exact sequences consisting of purely filtered forms in Lemma \ref{ccomplex2}.  Thus, we can also use Proposition \ref{etriple} to derive another long exact sequence involving only filtered cohomologies with Lefschetz type actions.   Instead of writing out explicitly the long exact sequence, we will just write down the resulting exact triangle:
\begin{align}
\xymatrix{
 &\; F^{r-1}H^*(M) \ar[ld] &  \\
 F^{l}H^*(M) \ar[rr]^-{h} & & F^{l+r}H^*(M)\ar[lu]}\label{Tpff}
\end{align}
where the map $h$ can be read off from Lemma \ref{ccomplex2} and is either $L^r$ or the inclusion map $\iota$.  Notice that $F^lH(M)$ when $l\geq n$ consists roughly of two copies of the de Rham cohomology $\HD{}(M)$.  Hence, the exact triangle of \eqref{Tpth} can be easily seen to be contained in \eqref{Tpff} when $l=n$.

\subsection{Properties of cohomologies}

Let us consider some of the implications of Theorem \ref{resolution} for the cohomologies.  We note first some immediate corollaries.

\begin{cor}
Let $(M,\omega)$ be a symplectic manifold of dimension $2n$.  Then for $k\leq n$, 
\begin{align}\label{res1}\begin{split}
\PHPR{k}(M) & \cong \ker(L^{n-k+1}: \HD{k-1}\to \HD{2n-k+1}) \oplus \cok(L^{n-k+1}:\HD{k-2} \to \HD{2n-k})~,\\
\PHPL{k}(M) & \cong \ker(L^{n-k+1}: \HD{k} \to \HD{2n-k+2}) \oplus \cok(L^{n-k+1}: \HD{k-1} \to \HD{2n-k+1})~,
\end{split}\end{align}
and for $2p<k < n+p$,
\begin{align}\label{res2}\begin{split}
\FHP{p}{k}(M) & \cong \ker(L^{p+1}: \HD{k-2p-1} \to \HD{k+1}) \oplus \cok(L^{p+1}: \HD{k-2p-2} \to \HD{k})~,\\
\FHM{p}{k}(M) & \cong \ker(L^{p+1}: \HD{2n-k} \to \HD{2n-k+2p+2}) \oplus \cok(L^{p+1}: \HD{2n-k-1} \to \HD{2n+2p-k+1})~.
\end{split}
\end{align}
In particular, when $p=0$, we have
\begin{align}\label{res3}\begin{split}
\PHDP{k}(M) & \cong \ker(L: \HD{k-1} \to \HD{k+1}) \oplus \cok(L: \HD{k-2} \to \HD{k})~,\\
\PHDM{k}(M) & \cong \ker(L: \HD{2n-k} \to \HD{2n-k+2}) \oplus \cok(L: \HD{2n-k-1} \to \HD{2n-k+1})~.
\end{split}\end{align}
where $0<k < n$.
\end{cor}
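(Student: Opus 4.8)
The plan is to read the corollary off directly from the long exact sequence of Theorem~\ref{resolution}, using nothing beyond the standard fact that a long exact sequence of $\BR$-vector spaces breaks up, at each term, into a short exact sequence of a cokernel and a kernel, together with the fact that every short exact sequence of vector spaces splits (the spaces involved being, moreover, finite-dimensional by ellipticity, Theorem~\ref{thrmecomplex}).

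First I would recall the elementary mechanism: in any exact sequence $\cdots \to A \xrightarrow{f} B \xrightarrow{g} C \xrightarrow{h} D \xrightarrow{k} E \to \cdots$ one has $\ker h = \im g \cong B/\ker g = \cok f$ and $C/\ker h \cong \im h = \ker k$, so that $0 \to \cok f \to C \to \ker k \to 0$ is exact and hence $C \cong \cok f \oplus \ker k$. Then I would apply this to each filtered cohomology appearing in the long exact sequence of Theorem~\ref{resolution}, taking $r = p+1$. For $\FHP{p}{k}$ with $2p<k<n+p$ the flanking data in the top half of the sequence is $\HD{k-2p-2} \xrightarrow{L^{p+1}} \HD{k} \to \FHP{p}{k} \to \HD{k-2p-1} \xrightarrow{L^{p+1}} \HD{k+1}$, which gives the first line of \eqref{res2}; for $\FHM{p}{k}$ one uses the bottom half, whose flanking Lefschetz maps are $L^{p+1}\colon\HD{2n-k-1}\to\HD{2n+2p-k+1}$ and $L^{p+1}\colon\HD{2n-k}\to\HD{2n-k+2p+2}$, giving the second line. (The endpoint cases $k=2p+1$ and $k=n+p-1$ fall out of the same formulas once one sets $\HD{j}=0$ for $j<0$ or $j>2n$.) Specializing to $p=0$ yields \eqref{res3}.

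For the middle cohomologies in \eqref{res1} I would first invoke the isomorphisms \eqref{fphm}--\eqref{fphmm}, which identify $\PHPR{k}$ with $\FHP{n-k}{2n-k}$ and $\PHPL{k}$ with $\FHM{n-k}{2n-k}$ (up to a harmless power of $L$), and then run the same argument with $r=n-k+1$: the term $\FHP{r-1}{n+r-1}\cong\PHPR{k}$ is flanked by $\HD{k-2}\xrightarrow{L^{n-k+1}}\HD{2n-k}$ and $\HD{k-1}\xrightarrow{L^{n-k+1}}\HD{2n-k+1}$, and $\FHM{r-1}{n+r-1}\cong\PHPL{k}$ is flanked by $\HD{k-1}\xrightarrow{L^{n-k+1}}\HD{2n-k+1}$ and $\HD{k}\xrightarrow{L^{n-k+1}}\HD{2n-k+2}$, producing the two lines of \eqref{res1}.

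I do not anticipate a real obstacle: all the substance is already in Theorem~\ref{resolution}, and upon passing to kernels and cokernels the explicit connecting maps ($\pil{p}$, $L^{-(p+1)}d$, $\rstar$, $\pil{p}\rstar d\,L^{-(p+1)}$) play no role. The only point requiring genuine care is the index bookkeeping --- keeping straight which de Rham groups flank a given filtered cohomology in each of the two halves of the long exact sequence, and correctly translating between the $F^pH$ grading and the $PH^k$ grading of the middle cohomologies via \eqref{fphm}--\eqref{fphmm}. As always, the direct-sum splittings are canonical only up to a choice of splitting of a short exact sequence of vector spaces.
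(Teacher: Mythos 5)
Your proposal is correct and is exactly the argument the paper intends: the corollary is read off from the long exact sequence of Theorem~\ref{resolution} (with $r=p+1$, resp.\ $r=n-k+1$) by splitting each term as the cokernel of the incoming Lefschetz map direct sum the kernel of the outgoing one, with the middle terms identified via \eqref{fphm}--\eqref{fphmm}. Your index bookkeeping in \eqref{res1}--\eqref{res3} checks out against the displayed sequence \eqref{longexactseq}.
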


Note that for $p=0$ and $k=1$, we have
\begin{align}\label{cor_compact0}\begin{split}
\PHDP{1}(M) &\cong \HD{1}(M) \oplus \ker(L: \HD{0} \to \HD{2})\\
\PHDM{1}(M) &\cong \HD{2n-1}(M) \oplus \cok(L: \HD{2n-2} \to \HD{2n})
\end{split}
\end{align}
In the closed case, the symplectic structure and more generally its powers, $\om^r$, are non-trivial in $\HD{2r}(M)$.   Hence, the formulas above in \eqref{cor_compact0} simplify with the kernel and cokernel terms on the right vanishing.  Such simplification also holds more generally for $\FHPM{p}{2p+1}(M)$ as expressed in the following corollary.
\begin{cor}\label{cor_compact}
On a closed symplectic manifold $(M^{2n}, \omega)$,
$$\FHP{p}{2p+1}(M) = \HD{2p+1}(M)~, \qquad \FHM{p}{2p+1}(M) = \HD{2n-2p-1}(M)$$
for any $p\in\{0,1,\ldots,n-1\}$.
\end{cor}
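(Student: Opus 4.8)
The plan is to deduce both equalities by reading off the two ends of the long exact sequence \eqref{longexactseq} of Theorem~\ref{resolution} with $r=p+1$. The only extra ingredient is the standard fact that on a closed symplectic manifold $(M^{2n},\om)$ the map $L^{n}\colon\HD{0}(M)\to\HD{2n}(M)$ is an isomorphism: since $\om^{n}$ is a volume form, $[\om^{n}]$ restricts to a generator of $\HD{2n}(M')\cong\BR$ on every connected component $M'$. Factoring $L^{n}=L^{n-p-1}L^{p+1}=L^{p+1}L^{n-p-1}$ on de Rham cohomology, I would first record the two consequences I actually use: for $0\le p\le n-1$, the map $L^{p+1}\colon\HD{0}(M)\to\HD{2p+2}(M)$ is injective and $L^{p+1}\colon\HD{2n-2p-2}(M)\to\HD{2n}(M)$ is surjective.

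Next I would specialise \eqref{longexactseq} to $r=p+1$. Its initial segment is
\begin{align*}
0\to\HD{2p+1}(M)\xrightarrow{\pil{p}}\FHP{p}{2p+1}(M)\xrightarrow{L^{-(p+1)}d}\HD{0}(M)\xrightarrow{L^{p+1}}\HD{2p+2}(M)\,.
\end{align*}
Injectivity of the last arrow forces $L^{-(p+1)}d$ to vanish on $\FHP{p}{2p+1}(M)$, so $\pil{p}$ is onto; it is injective because the sequence begins with $0$, hence it is an isomorphism. By \eqref{klep}, $\FO{p}{k}=\CA^{k}$ for $k\le 2p+1$ and $\ddp$ coincides with $d$ on $\FO{p}{2p}$, so here $\pil{p}$ is literally the identity on forms and the isomorphism is the identification $\FHP{p}{2p+1}(M)=\HD{2p+1}(M)$. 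Dually, the terminal segment of \eqref{longexactseq} is
\begin{align*}
\HD{2n-2p-2}(M)\xrightarrow{L^{p+1}}\HD{2n}(M)\xrightarrow{\pil{p}\rstar d\,L^{-(p+1)}}\FHM{p}{2p+1}(M)\xrightarrow{\rstar}\HD{2n-2p-1}(M)\to0\,.
\end{align*}
Surjectivity of the first arrow makes the map out of $\HD{2n}(M)$ zero, so $\rstar$ is injective; it is onto because the sequence ends in $0$. Since $\rstar$ restricts to a linear isomorphism $\FO{p}{2p+1}=\CA^{2p+1}\to\CA^{2n-2p-1}$ intertwining $\ddm$ with $d$, this yields $\FHM{p}{2p+1}(M)=\HD{2n-2p-1}(M)$.

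I do not expect a real obstacle; the one point to spell out is that the two isomorphisms produced by the exact sequence are the evident ones on the level of complexes, which is precisely what \eqref{klep} together with the definitions of $\ddp$ and $\ddm$ from Section~\ref{sec_pre} provide. The endpoint case $p=n-1$, where $2p+1=n+p$ is the top degree of the complex \eqref{fecomplex}, needs no separate treatment, since there $L^{p+1}=L^{n}$ is itself bijective and both exactness arguments go through verbatim. Alternatively, for $p\le n-2$ one could substitute $k=2p+1$ directly into \eqref{res2}, using $\HD{-1}(M)=\HD{2n+1}(M)=0$ together with the above injectivity/surjectivity of $L^{p+1}$, and handle $p=n-1$ via \eqref{fphm}--\eqref{fphmm} and \eqref{res1}.
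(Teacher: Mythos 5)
Your proof is correct and follows essentially the same route as the paper: the paper obtains this corollary from the kernel/cokernel decompositions of the long exact sequence in Theorem \ref{resolution} (equations \eqref{res1}--\eqref{res2}), noting that on a closed manifold $[\om^{p+1}]\neq 0$ makes $L^{p+1}$ injective on $\HD{0}$ and surjective onto $\HD{2n}$, which kills the extra kernel and cokernel summands. Your direct reading of the two ends of \eqref{longexactseq}, including the explicit treatment of the $p=n-1$ endpoint, is just a slightly more careful unwinding of the same argument.
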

This corollary extends the general isomorphism relations between filtered cohomologies, $F^pH_\pm^k(M)$ for $0\leq k\leq 2p$, and de Rham cohomologies in \eqref{federham1}-\eqref{fderham2}.  For the other filtered cohomologies, we write out explicitly their properties in the case of dimension four and six.
\begin{cor}\label{cor_dimension4}
Let $(M,\omega)$ be a $4$-dimensional symplectic manifold.  Then
\begin{align} \begin{split}
\PHPR{2}(M) &\cong \ker(L:\HD{1}\to\HD{3})\oplus \cok(L:\HD{0}\to\HD{2}) ~, \\
\PHPL{2}(M) &\cong \ker(L:\HD{2}\to\HD{4}) \oplus \cok(L:\HD{1}\to\HD{3}) ~.
\end{split} \end{align}
\end{cor}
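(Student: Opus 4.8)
The plan is to obtain this corollary as an immediate specialization of the general isomorphisms \eqref{res1} established in the preceding corollary. First I would set $n=2$, so that $\dim M = 2n = 4$, and take $k=2$, which is admissible since the hypothesis there is $k\leq n$. With these values the Lefschetz power occurring in \eqref{res1} is $L^{n-k+1}=L^{1}=L$, and the de Rham degrees reduce to $k-1=1$, $k-2=0$, $2n-k+1=3$, $2n-k=2$, $2n-k+2=4$. Substituting into the two lines of \eqref{res1} produces exactly the two asserted isomorphisms for $\PHPR{2}(M)$ and $\PHPL{2}(M)$, so the proof is essentially a one-line invocation of \eqref{res1}.

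For a more self-contained account one can trace the statement back to Theorem \ref{resolution} directly: take $r=1$, so that the resolving complex is the primitive ($F^{0}$) complex, and write out the long exact sequence of that theorem in dimension four. With $\HD{-1}(M)=\HD{5}(M)=0$ this is precisely the four-periodic long exact sequence displayed in the Introduction, and its two middle nodes carry $\FHP{0}{2}(M)$ and $\FHM{0}{2}(M)$, which by \eqref{fphm}--\eqref{fphmm} with $p=0$ are isomorphic to $\PHPR{2}(M)$ and $\PHPL{2}(M)$ respectively. Breaking the long exact sequence at each of these two nodes into a short exact sequence
\begin{align*}
0 \longrightarrow \cok\!\big(L:\HD{*}\to\HD{*+2}\big) \longrightarrow F^{0}H \longrightarrow \ker\!\big(L:\HD{*}\to\HD{*+2}\big) \longrightarrow 0,
\end{align*}
which splits because these are vector spaces, then yields the claimed direct sums once one identifies which copy of $L$ maps into, and which maps out of, each node.

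The only point requiring attention -- and it is entirely routine -- is the degree bookkeeping: one must check that the maps into and out of $\FHP{0}{2}$ are $L:\HD{0}\to\HD{2}$ and $L:\HD{1}\to\HD{3}$, that the maps into and out of $\FHM{0}{2}$ are $L:\HD{1}\to\HD{3}$ and $L:\HD{2}\to\HD{4}$, and that no boundary de Rham group such as $\HD{-1}$ or $\HD{5}$ contributes spuriously. Since there is no analytic or combinatorial difficulty beyond this indexing, I expect the entire proof to consist of the specialization of \eqref{res1} recorded in the first paragraph.
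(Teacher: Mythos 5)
Your proposal is correct and matches the paper's argument, which simply invokes Theorem \ref{resolution} with $n=2$ and $r=1$ (equivalently, the specialization of \eqref{res1} at $k=2$ that you carry out). The degree bookkeeping you record is exactly right, so nothing further is needed.
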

\begin{proof}
The corollary follows from Theorem \ref{resolution} for $n=2$ and $r=1$.
\end{proof}
\begin{cor}
Let $(M,\omega)$ be a $6$-dimensional symplectic manifold.  Then
\begin{align} \begin{split}
\PHDP{2}(M) &\cong \ker(L:\HD{1}\to\HD{3})\oplus \cok(L:\HD{0}\to\HD{2})  ~, \\
\PHPR{3}(M) &\cong \ker(L:\HD{2}\to\HD{4})\oplus \cok(L:\HD{1}\to\HD{3})  ~, \\
\PHPL{3}(M) &\cong \ker(L:\HD{3}\to\HD{5}) \oplus \cok(L:\HD{2}\to\HD{4}) ~, \\
\PHDM{2}(M) &\cong \ker(L:\HD{4}\to\HD{6}) \oplus \cok(L:\HD{3}\to\HD{5}) ~,\\
\PHPR{2}(M) &\cong \ker(L^2:\HD{1}\to\HD{5})\oplus \cok(L^2:\HD{0}\to\HD{4}) ~, \\
\PHPL{2}(M) &\cong \ker(L^2:\HD{2}\to\HD{6}) \oplus \cok(L^2:\HD{1}\to\HD{5}) ~. \\
\end{split} \end{align}
\end{cor}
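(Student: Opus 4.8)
The plan is to derive the corollary by specializing Theorem~\ref{resolution} to $n=3$, once with $r=1$ and once with $r=2$; equivalently, the six isomorphisms are exactly the instances of \eqref{res1} and \eqref{res3} at $n=3$ with $k\in\{2,3\}$, so the whole statement is a bookkeeping exercise once the long exact sequence of Theorem~\ref{resolution} is in hand.

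First I would take $r=1$ (so $p=r-1=0$) in Theorem~\ref{resolution}. The resulting long exact sequence resolves the degree-one Lefschetz map $L$ by the filtered cohomologies $F^{0}H^{*}(M)=\{\PHDP{0},\PHDP{1},\PHDP{2},\PHPR{3},\PHPL{3},\PHDM{2},\PHDM{1},\PHDM{0}\}$. Because each connecting homomorphism splits the sequence at a filtered-cohomology slot into a four-term piece that reorganizes as $0\to\cok(L:\cdots)\to F^{0}H^{k}\to\ker(L:\cdots)\to 0$, and such a sequence of vector spaces splits, I read off directly at the slots $k=2,3$ the four isomorphisms for $\PHDP{2}$, $\PHPR{3}$, $\PHPL{3}$, $\PHDM{2}$; these are precisely \eqref{res3} with $n=3,k=2$ and \eqref{res1} with $n=3,k=3$ (note $L^{n-k+1}=L$ in both cases).

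Next I would take $r=2$ (so $p=1$) in Theorem~\ref{resolution}. Now the long exact sequence resolves $L^{2}$ by $F^{1}H^{*}(M)$, and the only new information required is at the two ``middle'' slots $F^{1}H_{\pm}^{n+r-1}=F^{1}H_{\pm}^{4}$. Using $\FO{1}{4}=L\,\CB^{2}\cong\CB^{2}$ together with the isomorphisms $\FHP{p}{n+p}\cong\PHPR{n-p}$ and $\FHM{p}{n+p}\cong\PHPL{n-p}$ at $p=1$, these slots are identified with $L\,\PHPR{2}\cong\PHPR{2}$ and $L\,\PHPL{2}\cong\PHPL{2}$. Splitting the long exact sequence at these two spots into kernel--cokernel short exact sequences for $L^{2}$ then yields the remaining two isomorphisms, which are \eqref{res1} with $n=3,k=2$ (so that $L^{n-k+1}=L^{2}$).

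There is no substantive obstacle here: the entire content sits inside Theorem~\ref{resolution} (equivalently \eqref{res1} and \eqref{res3}), and the argument reduces to a diagram-free reshuffling of a long exact sequence of finite-dimensional spaces. The only point that will require care is matching, for each value of $r$, the filtered cohomology group $F^{r-1}H^{k}$ appearing in the sequence with the named primitive cohomology of Table~\ref{tabcoh2} and keeping track of the degree shifts $k\mapsto k-2r$ and $k\mapsto 2n-k$; in particular for $r=2$ one must use $n+r-1=4$ and the identification $F^{1}H_{\pm}^{4}\cong\PHPR{2},\PHPL{2}$ coming from $n-p=2$, rather than inadvertently reading off a degree-$1$ primitive cohomology.
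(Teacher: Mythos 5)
Your proposal is correct and follows exactly the paper's route: the paper's entire proof is the one-line observation that the corollary follows from Theorem \ref{resolution} with $n=3$ and $r=1,2$, which is precisely the specialization (with the same identifications $F^{0}H^{k}\cong PH^{k}_{\partial_\pm}, PH^{3}_{d\dl}, PH^{3}_{d+\dl}$ and $F^{1}H_{\pm}^{4}\cong PH^{2}_{d\dl}, PH^{2}_{d+\dl}$) that you carry out. Your additional bookkeeping of the kernel--cokernel splittings is just an expanded version of what the paper leaves implicit.
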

\begin{proof}
The corollary follows from Theorem \ref{resolution} for $n=3$ and $r=1,2$.
\end{proof}

Let us describe further a few more relations between filtered cohomologies.  In \cite{TY1,TY2}, it was shown that on a closed symplectic manifold, we have the following isomorphisms:
\begin{equation}\label{isomo}
\PHPR{k}(M) \cong \PHPL{k}(M)~, \qquad \PHDP{k}(M) \cong \PHDM{k}(M)~.
\end{equation}
This can also be seen from the above relations \eqref{res1} and \eqref{res3} after applying the following proposition:
\begin{prop}
Let $(M^{2n},\omega)$ be a closed symplectic manifold.  Then  
\begin{align} 
\ker(L^r: \HD{k} \to \HD{k+2r}) \cong  \cok(L^r: \HD{2n-k-2r} \to \HD{2n-k})~.
\end{align}
\end{prop}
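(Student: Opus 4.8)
The plan is to obtain this as a consequence of Poincar\'e duality, which is where the closedness of $M$ enters essentially (in contrast to Theorem~\ref{resolution}, which held for open manifolds as well). Since a symplectic manifold is canonically oriented by $\om^n$, on the closed $2n$-manifold $M$ the pairing
\begin{align*}
\HD{j}(M)\times\HD{2n-j}(M)\longrightarrow\BR~,\qquad \big([\alpha],[\be]\big)\longmapsto\int_M\alpha\w\be
\end{align*}
is non-degenerate; in particular every de Rham group occurring here is finite-dimensional and $\HD{j}(M)\cong\big(\HD{2n-j}(M)\big)^*$.

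The key observation I would make first is that, under this pairing, the Lefschetz map $L^r\colon\HD{2n-k-2r}(M)\to\HD{2n-k}(M)$ and the Lefschetz map $L^r\colon\HD{k}(M)\to\HD{k+2r}(M)$ are transpose to one another. This amounts to the identity
\begin{align*}
\int_M(\om^r\w\alpha)\w\gamma=\int_M\alpha\w(\om^r\w\gamma)
\end{align*}
for $[\alpha]\in\HD{k}(M)$ and $[\gamma]\in\HD{2n-k-2r}(M)$, which holds because $\deg\om^r=2r$ is even, so no sign is incurred in commuting $\om^r$ past $\alpha$. Reading this through the identifications $\big(\HD{k+2r}(M)\big)^*\cong\HD{2n-k-2r}(M)$ and $\big(\HD{k}(M)\big)^*\cong\HD{2n-k}(M)$ shows exactly that the transpose of $L^r\colon\HD{2n-k-2r}(M)\to\HD{2n-k}(M)$ is $L^r\colon\HD{k}(M)\to\HD{k+2r}(M)$.

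Finally I would invoke the elementary linear-algebra fact that for a linear map $T\colon V\to W$ one has $\ker(T^*)=(\im T)^\perp\cong\big(W/\im T\big)^*=\cok(T)^*$. Taking $T=L^r\colon\HD{2n-k-2r}(M)\to\HD{2n-k}(M)$, whose transpose $T^*$ is $L^r\colon\HD{k}(M)\to\HD{k+2r}(M)$ by the previous step, this yields
\begin{align*}
\ker\big(L^r\colon\HD{k}(M)\to\HD{k+2r}(M)\big)\cong\cok\big(L^r\colon\HD{2n-k-2r}(M)\to\HD{2n-k}(M)\big)^*~,
\end{align*}
and since the cokernel is finite-dimensional it is (non-canonically) isomorphic to its own dual, which gives the proposition. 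There is no genuine obstacle beyond the bookkeeping of which de Rham group is Poincar\'e-dual to which, so that the transpose of $L^r$ is placed in the correct degrees, together with the (trivial) sign check above. Combined with the corollary formulas \eqref{res1} and \eqref{res3}, this proposition then reproduces the isomorphisms $\PHPR{k}(M)\cong\PHPL{k}(M)$ and $\PHDP{k}(M)\cong\PHDM{k}(M)$ of \eqref{isomo}.
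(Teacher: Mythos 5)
Your proof is correct, and it is essentially the same argument the paper has in mind: the paper's (very terse) proof also rests on Poincar\'e duality $\HD{k}(M)\cong\HD{2n-k}(M)$ for the closed manifold, merely realizing it via de Rham harmonic representatives rather than via the intersection pairing and the transpose of $L^r$ as you do. Your formulation through $\ker(T^*)\cong\cok(T)^*$ is a clean and complete way of writing out what the paper leaves as a one-line remark.
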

\begin{proof}
This can be checked using the duality $\HD{k}(M) \cong \HD{2n-k}(M)$ for a closed manifold and focusing on the de Rham harmonic forms.
\end{proof}
This proposition together with \eqref{res2} then implies the following:
\begin{prop}
Let $(M,\omega)$ be a closed symplectic manifold.  Then  
\begin{align} \label{fhpmiso}
\FHP{p}{k}(M) \cong \FHM{p}{k}(M)~.
\end{align}
\end{prop}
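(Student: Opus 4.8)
The plan is to deduce the isomorphism from the resolution formulas \eqref{res2}, the boundary identifications \eqref{federham1}--\eqref{fderham2} and \eqref{fphm}--\eqref{fphmm}, together with the duality proposition just proved, treating the degree $k$ in three ranges which together exhaust $0\leq k\leq n+p$.

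For the generic range $2p<k<n+p$, formula \eqref{res2} writes both $\FHP{p}{k}(M)$ and $\FHM{p}{k}(M)$ as a kernel plus a cokernel of $L^{p+1}$, so it suffices to match the summands. I would apply the duality proposition with $r=p+1$ to the kernel summand $\ker(L^{p+1}:\HD{k-2p-1}\to\HD{k+1})$ of $\FHP{p}{k}(M)$; tracking degrees (so that $2n-(k-2p-1)-2(p+1)=2n-k-1$ and $2n-(k-2p-1)=2n-k+2p+1$) identifies it with $\cok(L^{p+1}:\HD{2n-k-1}\to\HD{2n-k+2p+1})$, which is exactly the cokernel summand of $\FHM{p}{k}(M)$. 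Reading the same proposition from right to left, i.e. $\cok(L^{p+1}:\HD{2n-m-2(p+1)}\to\HD{2n-m})\cong\ker(L^{p+1}:\HD{m}\to\HD{m+2p+2})$ with $m=2n-k$, identifies the cokernel summand $\cok(L^{p+1}:\HD{k-2p-2}\to\HD{k})$ of $\FHP{p}{k}(M)$ with $\ker(L^{p+1}:\HD{2n-k}\to\HD{2n-k+2p+2})$, the kernel summand of $\FHM{p}{k}(M)$. Taking the direct sum of the two matches yields $\FHP{p}{k}(M)\cong\FHM{p}{k}(M)$ in this range. This step is pure bookkeeping of the degrees; the only genuine input is the duality proposition, already established.

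For the two remaining ranges: when $0\leq k\leq 2p$, equations \eqref{federham1}--\eqref{fderham2} give $\FHP{p}{k}(M)=\HD{k}(M)$ and $\FHM{p}{k}(M)\cong\HD{2n-k}(M)$, so the desired isomorphism is simply ordinary Poincar\'e duality on the closed manifold $M$; and when $k=n+p$, the identifications \eqref{fphm}--\eqref{fphmm} give $\FHP{p}{n+p}(M)\cong\PHPR{n-p}(M)$ and $\FHM{p}{n+p}(M)\cong\PHPL{n-p}(M)$, which are isomorphic by \eqref{isomo}.

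Since the three ranges $\{0\leq k\leq 2p\}$, $\{2p<k<n+p\}$ and $\{k=n+p\}$ cover all of $0\leq k\leq n+p$ (with the edge cases $p=n-1$ and $p=n$, where the middle range is empty or the complex \eqref{fecomplex} degenerates into two de Rham complexes, harmlessly subsumed into the first range), this completes the argument. There is no analytic obstacle here: all the substantive work lies in Section \ref{sec_Lef} and in the duality proposition, so the only thing to be careful about is that the degree arithmetic in the middle range is carried out correctly and that no value of $k$ is omitted.
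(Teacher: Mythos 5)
Your proposal is correct and follows essentially the same route as the paper, which deduces the isomorphism from the duality proposition $\ker(L^r)\cong\cok(L^r)$ applied to the two summands in \eqref{res2}; your degree bookkeeping in the range $2p<k<n+p$ checks out. You are in fact slightly more explicit than the paper in covering the boundary degrees $0\leq k\leq 2p$ (via \eqref{federham1}--\eqref{fderham2} and Poincar\'e duality) and $k=n+p$ (via \eqref{fphm}--\eqref{fphmm} and \eqref{isomo}), which the paper leaves implicit.
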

Hence, we can now generalize the statement of Corollary \ref{uindex} to the case of a closed symplectic manifold. 
\begin{cor}
On a closed symplectic manifold, the index of the filtered elliptic complex of \eqref{fecomplex} is zero.
\end{cor}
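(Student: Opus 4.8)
The plan is to compute the index of the filtered elliptic complex \eqref{fecomplex} as the Euler characteristic of its cohomology and then collapse that Euler characteristic to zero using the isomorphism \eqref{fhpmiso}. First I would ``bend'' the two-row complex \eqref{fecomplex} into the single line
\[
0 \;\to\; \FO{p}{0} \;\xrightarrow{\ddp}\; \cdots \;\xrightarrow{\ddp}\; \FO{p}{n+p} \;\xrightarrow{\dpp\dpm}\; \BFO{p}{n+p} \;\xrightarrow{\ddm}\; \cdots \;\xrightarrow{\ddm}\; \BFO{p}{0} \;\to\; 0 ,
\]
exactly as in the Introduction. In this line the term $\FO{p}{k}$ occupies position $k$ for $0 \le k \le n+p$, and $\BFO{p}{k}$ occupies position $2n+2p+1-k$; the cohomology at these two positions is $\FHP{p}{k}(M)$ and $\FHM{p}{k}(M)$, respectively. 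Since the complex is elliptic by Theorem \ref{thrmecomplex}, all of these cohomologies are finite-dimensional, and the index is the alternating sum $\sum_j (-1)^j h_j$ of their dimensions, $h_j$ being the dimension of the cohomology sitting at position $j$.

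Next I would do the sign accounting. The group $\FHP{p}{k}(M)$ contributes with sign $(-1)^k$, while $\FHM{p}{k}(M)$, sitting at position $2n+2p+1-k$, contributes with sign $(-1)^{2n+2p+1-k} = -(-1)^k$ since $2n+2p$ is even. Hence the index of \eqref{fecomplex} equals
\[
\sum_{k=0}^{n+p} (-1)^k \Big( \dim \FHP{p}{k}(M) - \dim \FHM{p}{k}(M) \Big) .
\]
On a closed symplectic manifold, \eqref{fhpmiso} asserts $\FHP{p}{k}(M) \cong \FHM{p}{k}(M)$ for every $k$, so each summand vanishes and the index is zero. This extends Corollary \ref{uindex} from the open disk to the closed case.

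There is no analytic obstacle in this argument: its entire content is the degree-by-degree matching of dimensions already packaged in \eqref{fhpmiso}. The only place demanding a little care is the bookkeeping --- assigning to $\FHM{p}{k}(M)$ the sign $(-1)^{2n+2p+1-k}$ forced by its position in the bent complex, and checking that \eqref{fhpmiso} is genuinely available in \emph{all} degrees $0 \le k \le n+p$: in the range $0 \le k \le 2p$ it comes from \eqref{federham1}--\eqref{fderham2} and Poincar\'e duality, at the middle degree $k = n+p$ (where the fourth-order operator $\dpp\dpm$ appears) it comes from \eqref{fphm}--\eqref{fphmm} together with \eqref{isomo}, and for $2p < k < n+p$ it comes from the kernel--cokernel description \eqref{res2} combined with Poincar\'e duality. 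Once these are in place, the alternating sum telescopes to zero term by term.
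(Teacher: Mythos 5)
Your proof is correct and follows the same route as the paper: the corollary is presented there as an immediate consequence of the isomorphism \eqref{fhpmiso}, with the index computed as the alternating sum of cohomology dimensions in the bent complex, exactly as you do. Your extra care in verifying that \eqref{fhpmiso} holds in every degree $0\leq k\leq n+p$ is a welcome (if slightly more explicit than the paper's) touch, but it does not change the argument.
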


\subsection{Examples}

\subsubsection{Cotangent bundle}

The filtered cohomologies can be straightforwardly calculated for the cotangent bundle $M=T^*N$ with respect to the canonical symplectic structure $\omega=-d\alpha$ where $\alpha$ is the tautological $1$-form.

Due to the fact that $N$ is a deformation retract of $M$ and that the de Rham cohomology is homotopically invariant, we have
$$ \HD{k}(M) = \HD{k}(N)~,$$
and hence, all the de Rham cohomological data on the bundle $M$ comes from the base $N$.  However, for filtered cohomologies, the Poincar\'e-lemma results of Section 3.2 are suggestive that $F^pH(M)$ should contain more information, for instance, they should involve the tautological one-form, $\alpha$.  With a local coordinate chart $\{x_1, \ldots, x_n, x_{n+1},\ldots, x_{2n}\}$ and the canonical symplectic form given by $\om = -d\alpha = \sum dx_i \w dx_{n+i}\,$, the following results for the primitive cohomologies, $F^0H(M)=PH(M)$, were obtained previously by direct calculation in \cite{TTY}:


\begin{prop}  The primitive symplectic cohomologies of the cotangent bundle $M=T^*N$ with respect to the canonical symplectic form are
\begin{enumerate}
\item $PH_{\dpp}^0(M)= H^0_{d}(N)$ and $ PH_\dpp^k(M) = \left\{ H^k_{d}(N)\,, \, \alpha \w H^{k-1}_{d}(N)\right\}$ for $1\leq  k < n\,$;
\item $PH_{d\dl}^0(M)= 0$, $PH_{d\dl}^k(M) = \left\{\alpha \w H^{k-1}_{d}(N)\right\}$ for $1\leq k < n\,$ and $PH_{d\dl}^n(M) = \left\{ H^n_{dR}(N)\,, \, \alpha \w H^{n-1}_{d}(N)\right\}$;
\item $PH_{d+\dl}^k(M) =  H^k_{d}(N)$ for $0\leq  k \leq n\,$;
\item $PH_\dpm^k(M) = 0$ for $0\leq  k < n\,$.
\end{enumerate}
\end{prop}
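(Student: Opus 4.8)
The plan is to read off all four families of primitive cohomologies from the Lefschetz-map resolution of Theorem~\ref{resolution} --- equivalently, from the isomorphisms \eqref{res1}--\eqref{res3} --- using two features special to $M=T^{*}N$.  First, since $\om=-d\alpha$ is exact, so is every power $\om^{r}$, hence $[\om^{r}]=0$ in $\HD{2r}(M)$ and \emph{every} Lefschetz map $L^{r}\colon\HD{k}(M)\to\HD{k+2r}(M)$ is the zero map.  Second, the zero section is a deformation retract of $T^{*}N$, so $\HD{k}(M)\cong\HD{k}(N)$; in particular $\HD{j}(N)=0$ for $j>n=\dim N$.

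With the Lefschetz maps vanishing, in each of \eqref{res1}--\eqref{res3} the kernel of $L^{r}$ becomes its whole domain and the cokernel becomes its whole target.  Thus, for $0<k<n$, \eqref{res3} gives $PH_{\dpp}^{k}(M)\cong\HD{k-1}(N)\oplus\HD{k}(N)$ and $PH_{\dpm}^{k}(M)\cong\HD{2n-k}(N)\oplus\HD{2n-k+1}(N)=0$, the latter because both indices exceed $n$; and \eqref{res1} gives, for $0\le k\le n$, $PH_{d\dl}^{k}(M)\cong\HD{k-1}(N)\oplus\HD{2n-k}(N)$ and $PH_{d+\dl}^{k}(M)\cong\HD{k}(N)\oplus\HD{2n-k+1}(N)$.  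Here $\HD{2n-k+1}(N)=0$ for all $k\le n$, so $PH_{d+\dl}^{k}(M)\cong\HD{k}(N)$; and $\HD{2n-k}(N)=0$ for $k<n$, so $PH_{d\dl}^{k}(M)\cong\HD{k-1}(N)$ for $k<n$ while $PH_{d\dl}^{n}(M)\cong\HD{n-1}(N)\oplus\HD{n}(N)$.  The boundary value $k=0$ of \eqref{res1} gives $PH_{d+\dl}^{0}(M)\cong\HD{0}(N)$ and $PH_{d\dl}^{0}(M)=0$, while $PH_{\dpp}^{0}(M)=\HD{0}(M)$ and $PH_{\dpm}^{0}(M)\cong\HD{2n}(M)=0$ follow from \eqref{federham1}--\eqref{fderham2}.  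This yields the stated groups in all four parts.

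It then remains to promote these abstract isomorphisms to the explicit representatives in the statement.  For a closed form $\be$ on $N$, the pullback $\pi^{*}\be$ has no $dx^{n+i}$ component, hence is primitive ($\La\,\pi^{*}\be=0$), and $d\pi^{*}\be=0$ forces $\dpp\pi^{*}\be=\dpm\pi^{*}\be=0$; these represent the $\HD{k}(N)$ summands in $PH_{d+\dl}$, in $PH_{d\dl}^{n}$, and in $PH_{\dpp}$.  Likewise $\alpha\w\pi^{*}\be$ is primitive, and from $d(\alpha\w\pi^{*}\be)=\om\w\pi^{*}\be=L\,\pi^{*}\be$ one reads off $\dpp(\alpha\w\pi^{*}\be)=0$ and $\dpm(\alpha\w\pi^{*}\be)=\pi^{*}\be$; so $\alpha\w\pi^{*}\be$ is $\dpp$- and $\dpp\dpm$-closed but not $(\dpp+\dpm)$-closed, which produces the $\alpha\w\HD{k-1}(N)$ summands of $PH_{\dpp}$ and $PH_{d\dl}$ and accounts for their absence in $PH_{d+\dl}$; moreover the same identity shows $\pi^{*}\be=\dpm(\alpha\w\pi^{*}\be)$ is $d\dl$-exact, so the de Rham pullbacks die in $PH_{d\dl}^{k}$ for $k<n$ but survive at $k=n$, where $\CB^{n+1}=0$.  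Finally, feeding $\alpha\w\pi^{*}\be$ into the connecting homomorphism $L^{-r}d$ of the long exact sequence returns $\pi^{*}\be$, so $\alpha\w\pi^{*}(-)$ splits the relevant short exact sequence, and a dimension count against the groups already computed shows these representatives exhaust the cohomology.

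The main obstacle is bookkeeping rather than conceptual: one must line up the indexing of the long exact sequence of Theorem~\ref{resolution} with the four named cohomologies across its middle ``shift'', keep straight that it is the \emph{cokernels} of the (vanishing) Lefschetz maps that supply the surviving de Rham groups, and confirm that $\pi^{*}\be$ and $\alpha\w\pi^{*}\be$ span rather than merely inject.  Alternatively, one can bypass Theorem~\ref{resolution} and argue directly from the local Poincar\'e lemmata for the primitive cohomologies (Proposition~\ref{Poincare_disk1} and its $\ddm$-, $d\dl$- and $\dpm$-counterparts) together with a Mayer--Vietoris computation over a good cover of $N$, which is the route taken in \cite{TTY}.
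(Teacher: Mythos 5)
Your proposal is correct and follows essentially the route the paper itself indicates: it derives the four primitive cohomologies from Theorem~\ref{resolution} (via the isomorphisms \eqref{res1}--\eqref{res3}) using the vanishing of all Lefschetz maps on $H_d^*(T^*N)\cong H_d^*(N)$ because $\om=-d\alpha$ is exact, with the explicit representatives $\pi^*\be$ and $\alpha\w\pi^*\be$ matched afterwards; the paper merely cites the direct calculation of \cite{TTY} and notes this same long-exact-sequence derivation. The only blemish is the sign $d(\alpha\w\pi^*\be)=d\alpha\w\pi^*\be=-\om\w\pi^*\be$ (not $+\om\w\pi^*\be$), which is harmless for the cohomological conclusions.
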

These results can now also be obtained using the long exact sequence of Theorem \ref{resolution}.  Moreover, we can also derive the following results for all filtered cohomologies by applying Theorem \ref{resolution}.
\begin{prop}  The filtered cohomologies of the cotangent bundle $M=T^*N$ with respect to the canonical symplectic form are
\begin{enumerate}
\item $\FHP{p}{k}(M) = \HD{k}(N)$ for $ 0\leq  k \leq 2p\,$;
\item $\FHP{p}{k}(M) = \left\{\HD{k}(N), \om^p \w \alpha \w \HD{k-2p-1}(N) \right\}$ for $ 2p+1 \leq  k \leq n\,$;
\item $\FHP{p}{k}(M) = \left\{ \om^p \w \alpha \w \HD{k-2p-1}(N) \right\}$ for $p>1$ and $ n+1 \leq  k \leq n+p\,$;
\item $\FHM{p}{k}(M) = \left\{\om^{k-n} \w \HD{2n-k}(N) \right\}\,$, for $n \leq k \leq n+p\,$;
\item $\FHM{p}{k}(M) = 0\, $, for $0 \leq k < n\,$.
\end{enumerate}
\end{prop}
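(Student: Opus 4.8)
The plan is to read off all five cases from Theorem~\ref{resolution} with $r=p+1$ (equivalently from \eqref{res1}--\eqref{res3} and the de Rham identifications \eqref{federham1}--\eqref{fderham2}), after feeding in the features peculiar to $M=T^*N$. First, $N$ is a deformation retract of $M$, so $\HD{k}(M)\cong\HD{k}(N)$, which vanishes once $k>\dim N=n$. Second, the canonical form is exact, $\omega=-d\alpha$, so $[\omega^{p+1}]=0$ in $\HD{2p+2}(M)$ and hence every Lefschetz map $L^{p+1}\colon\HD{j}(M)\to\HD{j+2p+2}(M)$ is the zero map. Third, the zero section is Lagrangian, so any form pulled back from $N$ is primitive on $M$ provided its degree is $\le n$, and so is $\alpha$ wedged with such a form (as $\alpha$ has horizontal differential part); this is what will produce clean representatives.

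First I would apply Theorem~\ref{resolution} with $r=p+1$. Since all the $L^{p+1}$-arrows are now zero, the long exact sequence decomposes into short exact sequences of real vector spaces, which therefore split:
\begin{align*}
0\to\HD{k}(M)\xrightarrow{\pil{p}}\FHP{p}{k}(M)\xrightarrow{L^{-(p+1)}d}\HD{k-2p-1}(M)\to 0
\end{align*}
for $2p+1\le k\le n+p$ on the ``$+$'' side, and
\begin{align*}
0\to\HD{2n+2p-k+1}(M)\to\FHM{p}{k}(M)\xrightarrow{\rstar}\HD{2n-k}(M)\to 0
\end{align*}
on the ``$-$'' side, with the top degree $k=n+p$ reached through \eqref{fphm}--\eqref{fphmm} and \eqref{res1}, and the range $0\le k\le 2p$ handled directly by \eqref{federham1}--\eqref{fderham2}. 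Now I would use $\HD{j}(N)=0$ for $j>n$: the summand $\HD{k-2p-1}(N)$ survives throughout $2p+1\le k\le n+p$ while $\HD{k}(N)$ contributes only for $k\le n$, which separates parts (2) and (3); on the ``$-$'' side $\HD{2n+2p-k+1}(N)$ dies identically for $k\le n+p$, while $\HD{2n-k}(N)$ contributes only for $k\ge n$, giving parts (4) and (5). Part (1) is \eqref{federham1} combined with $\HD{k}(M)\cong\HD{k}(N)$.

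Next I would exhibit representatives matching the stated descriptions. The $\HD{k}(N)$ summand of $\FHP{p}{k}(M)$ is carried by pullbacks of closed forms from $N$: these are primitive and $d$-closed, hence $\ddp$-closed elements of $\FO{p}{k}$, and they realize $\pil{p}$ of the de Rham classes. For the complementary summand, given a closed form $\beta$ on $N$ of degree $k-2p-1$, set $A_k=\om^p\w\alpha\w\pi^*\beta$; then $A_k\in\FO{p}{k}$ because $\alpha\w\pi^*\beta$ is primitive, and $dA_k=-L^{p+1}\pi^*\beta$ (using $\omega=-d\alpha$ and $d\pi^*\beta=0$), so $A_k$ is $\ddp$-closed and $L^{-(p+1)}dA_k=-\pi^*\beta$ represents $-[\beta]$ under the connecting map; this is the $\om^p\w\alpha\w\HD{k-2p-1}(N)$ term of parts (2) and (3). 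Symmetrically, for $\FHM{p}{k}(M)$ with $n\le k\le n+p$ and $\beta$ a closed form on $N$ of degree $2n-k$, the form $\om^{k-n}\w\pi^*\beta$ lies in $\FO{p}{k}$ (since $\om^{p+1}\w\pi^*\beta=0$), is $\ddm$-closed because $\ddm=\rstar d\rstar$ and $\rstar(\om^{k-n}\w\pi^*\beta)=\pi^*\beta$, and maps to $[\beta]$ under $\rstar$; these give part (4).

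The purely homological part---splitting the long exact sequence, counting dimensions, invoking $\HD{j}(N)=0$ for $j>n$---should be routine. The genuine content is step~(a): checking that base forms and $\alpha$-forms on a cotangent bundle are primitive, and that the representatives above are truly non-exact in the filtered cohomologies---i.e.\ that the connecting map sends $\om^p\w\alpha\w\pi^*\beta$ to $-[\beta]$ and $\om^{k-n}\w\pi^*\beta$ to $[\beta]$ at the level of cohomology, not merely of cochains. This is precisely the point where the tautological one-form $\alpha$, invisible to de Rham cohomology, enters the filtered cohomology. A secondary chore, step~(b), is the bookkeeping at the boundary degrees $k=2p$, $2p+1$, $n$, $n+p$, where one must pass between \eqref{federham1}--\eqref{fderham2}, \eqref{res2}, and \eqref{fphm}--\eqref{fphmm}, and verify that the hypothesis $p>1$ in part (3) together with the vanishing ranges leaves no degree uncovered.
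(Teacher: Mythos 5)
Your proposal is correct and follows exactly the route the paper intends: the paper itself gives no written proof beyond the remark that the results follow from Theorem \ref{resolution}, and your argument carries that out — exactness of $\omega=-d\alpha$ kills every Lefschetz map on $\HD{*}(M)\cong\HD{*}(N)$, the long exact sequence splits into short exact sequences, and $\HD{j}(N)=0$ for $j>n$ sorts the summands into the five cases, with the explicit representatives $\om^p\w\alpha\w\pi^*\beta$ and $\om^{k-n}\w\pi^*\beta$ correctly realizing the connecting maps.
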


From the above proposition, it is clear that the isomorphism relations that hold true for closed manifolds such as  $\FHP{p}{*}(M)\cong \FHM{p}{*}(M)$ \eqref{fhpmiso} or those in Corollary \ref{cor_compact} do not hold for the cotangent bundle $M=T^*N$ and are also generally not valid for open manifolds.


\subsubsection{Four-dimensional symplectic manifold from fibered three-manifold}\label{example_mapping_torus}

In this subsection, we apply Theorem \ref{resolution} to calculate the primitive cohomologies for another class of examples: the symplectic $4$-manifold which is the product of a fibered $3$-manifold with a circle.  Due to McMullen and Taubes \cite{MT}, such a construction provides the first example of a manifold with inequivalent symplectic forms.

The input is a closed surface $\Sigma$ with an orientation preserving self-diffeomorphism $\tau$.  The map $\tau$ is called the monodromy.  By Moser's trick, we may assume that there is a $\tau$-invariant symplectic form $\omega_\Sigma$ on $\Sigma$.  To be more precise, the monodromy $\tau$ might be replaced by another isotopic one.  Denote by $Y_\tau$ the mapping torus
\begin{align}\label{mapping_torus}
Y_\tau &= \Sigma\,\times_\tau S^1 = \frac{\Sigma\times[0,1]}{(\tau(x),0)\sim(x,1)} ~.
\end{align}
There is a natural map from $Y_\tau$ to $S^1$ induced by the projection $\Sigma\times[0,1]\to[0,1]$.  Let $\phi$ be the coordinate for the base of the fibration $Y_\tau\to S^1$.  Then, the $4$-manifold $X = S^1\times Y_\tau$ admits a symplectic form defined by
\begin{align*}
\omega &= dt\wedge d\phi +\omega_\Sigma
\end{align*}
where $t$ is the coordinate for the $S^1$-factor of $X$.

Noting Corollary \ref{cor_compact}, the interesting filtered/primitive cohomologies to consider for a compact symplectic $4$-manifold are $\PHPR{2}(X)$ and $\PHPL{2}(X)$.  Their dimensions are given by Corollary \ref{cor_dimension4}.  As we will see momentarily, the Lefschetz map $L$ on $X$ is determined by the map of wedging with $d\phi$ on $Y_\tau$.  Let us start with the following useful linear algebra lemma.

\begin{lem}\label{lem_linear}
Let $(V^{2n},\Omega)$ be a symplectic vector space, and let $A:V\to V$ be a linear symplectomorphism.  Then, the $\Omega$-orthogonal complement of $\ker(A-\bo)$ is $\im(A-\bo)$, where $\bo$ is the identity map on $V$.  As a consequence, $\ker(A-\bo)\cap\im(A-\bo)$ is exactly the kernel of $\Omega|_{\ker(A-\bo)}$.
\end{lem}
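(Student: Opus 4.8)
The plan is to combine a dimension count, which uses only the nondegeneracy of $\Omega$, with a one-line computation that uses the symplectomorphism property to establish a single inclusion; equality then follows for free. There is essentially no serious obstacle here — the only point requiring any care is invoking the identity $\Omega(Av,Aw)=\Omega(v,w)$ at the right moment.

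First I would record the two relevant dimension identities. Since $\Omega$ is nondegenerate on $V$, every subspace $W\subseteq V$ satisfies $\dim W+\dim W^{\perp_\Omega}=2n$. On the other hand, rank--nullity applied to the linear map $A-\bo$ gives $\dim\ker(A-\bo)+\dim\im(A-\bo)=2n$. Taking $W=\ker(A-\bo)$ and comparing, we obtain $\dim\bigl(\ker(A-\bo)\bigr)^{\perp_\Omega}=\dim\im(A-\bo)$. Hence it is enough to prove the inclusion $\im(A-\bo)\subseteq\bigl(\ker(A-\bo)\bigr)^{\perp_\Omega}$.

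For that inclusion, let $v\in\ker(A-\bo)$, so that $Av=v$, and let $w\in V$ be arbitrary. Then
$$\Omega\bigl(v,(A-\bo)w\bigr)=\Omega(v,Aw)-\Omega(v,w)=\Omega(Av,Aw)-\Omega(v,w)=\Omega(v,w)-\Omega(v,w)=0,$$
where the second equality uses $Av=v$ and the third uses that $A$ preserves $\Omega$. Thus $(A-\bo)w$ is $\Omega$-orthogonal to every element of $\ker(A-\bo)$, which gives $\im(A-\bo)\subseteq\bigl(\ker(A-\bo)\bigr)^{\perp_\Omega}$; equality then follows from the dimension count above.

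Finally, for the stated consequence, I would intersect both sides of the identity $\bigl(\ker(A-\bo)\bigr)^{\perp_\Omega}=\im(A-\bo)$ with $\ker(A-\bo)$. A vector $v\in\ker(A-\bo)$ lies in $\ker(A-\bo)\cap\im(A-\bo)=\ker(A-\bo)\cap\bigl(\ker(A-\bo)\bigr)^{\perp_\Omega}$ precisely when $\Omega(v,u)=0$ for all $u\in\ker(A-\bo)$, which is by definition the kernel of the restriction $\Omega|_{\ker(A-\bo)}$. This completes the argument.
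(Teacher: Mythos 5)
Your proposal is correct and follows essentially the same route as the paper's proof: the same one-line computation $\Omega(v,(A-\bo)w)=\Omega(Av,Aw)-\Omega(v,w)=0$ establishes orthogonality, and the same dimension count (rank--nullity plus nondegeneracy of $\Omega$) upgrades the inclusion to equality. Your write-up is merely a bit more explicit about the two dimension identities and about deducing the final consequence.
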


\begin{proof}
Suppose that $u\in\ker(A-\bo)$, which means that $Au = u$.  For any $v\in V$, we compute
\begin{align*}
\Omega(Av - v, u) &= \Omega(Av, u) - \Omega(v,u) \\
&= \Omega(Av,Au) - \Omega(v,u) = 0 ~.
\end{align*}
It follows that $\ker(A-\bo)$ and $\im(A-\bo)$ are $\Omega$-orthogonal to each other.  By dimension counting, they must also be the $\Omega$-orthogonal complement of each other.
\end{proof}

Now the de Rham cohomology of $Y_\tau$ can be standardly derived.
\begin{prop}\label{prop_mapping_torus}
Let $Y_\tau$ be the $3$-manifold defined by (\ref{mapping_torus}), and let $d\phi$ be the pull-back of the canonical $1$-form from $S^1$ to $Y_\tau$.  Then,
\begin{enumerate}
\item $\HD{1}(Y_\tau) \cong \spn\{ d\phi \}\oplus \ker\big( (\tau^*-\bo):\HD{1}(\Sigma)\to\HD{1}(\Sigma) \big)$;
\item $\HD{2}(Y_\tau) \cong \spn\{ \om_\Sigma \}\oplus \cok\big( (\tau^*-\bo):\HD{1}(\Sigma)\to\HD{1}(\Sigma) \big)$;
\item with the above identifications, the kernel of wedging with $d\phi$ from $\HD{1}(Y_\tau)$ to $\HD{2}(Y_\tau)$ is $\spn\{d\phi\} \oplus \big( \ker(\tau^*-\bo)\cap\im(\tau^*-\bo) \big)$.
\end{enumerate}
\end{prop}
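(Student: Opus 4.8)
The computation rests on the Wang exact sequence of the fibration $\Sigma\hookrightarrow Y_\tau\xrightarrow{\pi}S^1$, together with the concrete model $Y_\tau=(\Sigma\times\BR)\big/\big((\tau(x),t)\sim(x,t+1)\big)$, in which a differential form on $Y_\tau$ is a $\tau$-equivariant form on $\Sigma\times\BR$ and hence splits as $\gamma=\alpha(t)+a(t)\,d\phi$ with $\alpha(t),a(t)$ smooth families of forms on $\Sigma$ obeying $\alpha(t+1)=\tau^*\alpha(t)$, $a(t+1)=\tau^*a(t)$; for such a $\gamma$, $d\gamma=d_\Sigma\alpha(t)+d\phi\w\big(\partial_t\alpha(t)-d_\Sigma a(t)\big)$.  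I will use the Wang sequence for (1)--(2) and this form model for (3).

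\emph{Parts (1) and (2).}  I would write the Wang sequence
\[
\cdots\to\HD{k}(Y_\tau)\xrightarrow{\,i^*\,}\HD{k}(\Sigma)\xrightarrow{\,\tau^*-\bo\,}\HD{k}(\Sigma)\xrightarrow{\,\delta\,}\HD{k+1}(Y_\tau)\to\cdots
\]
and use that $\tau$ preserves orientation, so $\tau^*-\bo$ vanishes on $\HD{0}(\Sigma)$ and on $\HD{2}(\Sigma)\cong\BR$.  Reading the sequence around $k=1$ gives a short exact sequence $0\to\HD{0}(\Sigma)\xrightarrow{\delta}\HD{1}(Y_\tau)\xrightarrow{i^*}\ker(\tau^*-\bo)\to 0$, and around $k=2$ gives $0\to\cok(\tau^*-\bo)\xrightarrow{\delta}\HD{2}(Y_\tau)\xrightarrow{i^*}\HD{2}(\Sigma)\to 0$, where in both the relevant map $\tau^*-\bo$ is the endomorphism of $\HD{1}(\Sigma)$.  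These split as vector spaces; to pin down the distinguished summands I would note that $\delta(1)=\pm\,\pi^*[\mathrm{vol}_{S^1}]=\pm[d\phi]$, so $\spn\{d\phi\}=\ker\big(i^*\colon\HD{1}(Y_\tau)\to\HD{1}(\Sigma)\big)$, and that $\om_\Sigma$, being $\tau$-invariant, descends to a closed $2$-form on $Y_\tau$ whose restriction to each fiber is $\om_\Sigma\ne 0$ in $\HD{2}(\Sigma)$, hence gives a splitting of the second sequence with image $\spn\{\om_\Sigma\}$.  This is (1) and (2).

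\emph{Part (3).}  Since $d\phi\w d\phi=0$, the summand $\spn\{d\phi\}$ lies in $\ker(d\phi\w\cdot)$.  For any $[\gamma]\in\HD{1}(Y_\tau)$, $i^*[d\phi\w\gamma]=(i^*[d\phi])\w(i^*[\gamma])=0$, so $d\phi\w\cdot$ takes values in $\ker\big(i^*\colon\HD{2}(Y_\tau)\to\HD{2}(\Sigma)\big)$, the $\cok(\tau^*-\bo)$-summand; and it kills $\ker\big(i^*\colon\HD{1}(Y_\tau)\to\HD{1}(\Sigma)\big)=\spn\{d\phi\}$, so it descends to a map $\bar g\colon\ker(\tau^*-\bo)\to\cok(\tau^*-\bo)$, and under the identifications of (1)--(2) one has $\ker(d\phi\w\cdot)=\spn\{d\phi\}\oplus\ker\bar g$.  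It remains to show $\ker\bar g=\ker(\tau^*-\bo)\cap\im(\tau^*-\bo)$.  Given $c\in\ker(\tau^*-\bo)\subset\HD{1}(\Sigma)$, represent the corresponding class in $\HD{1}(Y_\tau)$ by $\gamma=\alpha(t)+a(t)\,d\phi$ with $\alpha(t)$ a $\tau$-equivariant family of closed $1$-forms on $\Sigma$ and $[\alpha(t)]\equiv c$; then $d\phi\w\gamma=d\phi\w\alpha(t)$, whose class depends only on $c$ (two such families differ by $d_\Sigma$ of an equivariant family of functions).  If $d\phi\w\alpha(t)=d\big(\eta_0(t)+b(t)\,d\phi\big)$ with a $\tau$-equivariant primitive, matching the $d_\Sigma$-part forces $d_\Sigma\eta_0(t)=0$ and matching the $d\phi$-part forces $\partial_t\eta_0(t)-d_\Sigma b(t)=\alpha(t)$; passing to $\HD{1}(\Sigma)$, $\partial_t[\eta_0(t)]=c$ and $[\eta_0(t+1)]=\tau^*[\eta_0(t)]$, so $c=[\eta_0(1)]-[\eta_0(0)]=(\tau^*-\bo)[\eta_0(0)]\in\im(\tau^*-\bo)$.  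Conversely, if $c=(\tau^*-\bo)v$ with $v\in\HD{1}(\Sigma)$ (and $c\in\ker(\tau^*-\bo)$), I would build a smooth $\tau$-equivariant family $\eta_0(t)$ of closed $1$-forms with $[\eta_0(t)]\equiv v+tc$ — the requirements $[\eta_0(t+1)]=\tau^*[\eta_0(t)]$ and $[\eta_0(t)]=v+tc$ are compatible precisely because $c=(\tau^*-\bo)v$ and $\tau^*c=c$ — and then $\alpha(t):=\partial_t\eta_0(t)$ is an admissible representative of $c$ with $d\phi\w\alpha(t)=d_\Sigma\eta_0(t)+d\phi\w\partial_t\eta_0(t)=d\eta_0(t)$ exact on $Y_\tau$, so $\bar g(c)=0$.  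Hence $\ker\bar g=\ker(\tau^*-\bo)\cap\im(\tau^*-\bo)$, which proves (3).

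\emph{Expected obstacle.}  Everything except the determination of $\ker\bar g$ is routine; the crux is showing that the kernel of $d\phi\w\cdot$ on the $\ker(\tau^*-\bo)$-summand is exactly $\ker(\tau^*-\bo)\cap\im(\tau^*-\bo)$ — equivalently, that $d\phi\w\cdot$ there agrees, up to isomorphism, with the projection to $\cok(\tau^*-\bo)$, i.e.\ with the Wang connecting map — and in particular the sufficiency direction, which needs the standard but slightly delicate construction of a smooth $\tau$-equivariant family of closed $1$-forms with a prescribed, linearly growing cohomology class.  Finally, I note that applying Lemma \ref{lem_linear} to $V=\HD{1}(\Sigma)$ with its intersection pairing $\Omega$ and $A=\tau^*$ identifies $\ker(\tau^*-\bo)\cap\im(\tau^*-\bo)$ with $\ker\big(\Omega|_{\ker(\tau^*-\bo)}\big)$, which is what makes this space effectively computable in the applications that follow.
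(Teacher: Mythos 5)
Your proof is correct. For items (1) and (2) you follow essentially the same route as the paper: the Wang exact sequence for the mapping torus, split using $\delta(1)=\pm[d\phi]$ and the $\tau$-invariant area form $\om_\Sigma$. For item (3), however, your argument is genuinely different. The paper does not actually prove (3) inside the proof environment; it defers to the explicit construction of representatives in Section \ref{example_product}, where the Jordan canonical form of $\tau^*$ on $\HD{1}(\Sigma)$ is used to build the forms $\tilde\gamma_{k,j}$, and \eqref{eqn_d_gamma} (in particular $d\tilde{\gamma}_{k,1}=d\phi\wedge\tilde{\gamma}_{k,0}$) exhibits $d\phi\wedge\tilde{\alpha}_k$ as exact precisely when $[\alpha_k]$ heads a Jordan block of size at least two, i.e.\ lies in $\im(\tau^*-\bo)$. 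You instead argue basis-free: under the identifications of (1)--(2), wedging with $d\phi$ on the fiber summand $\ker(\tau^*-\bo)$ is identified with the Wang connecting homomorphism into $\cok(\tau^*-\bo)$, whose kernel is $\im(\tau^*-\bo)$. Your route gives a self-contained proof at this point of the paper, while the paper's route produces the explicit representatives that are needed anyway for the later product computations. One small point to tighten in your sufficiency direction: $\alpha(t)=\partial_t\eta_0(t)$ by itself need not be a closed form on $Y_\tau$ (only $d_\Sigma\alpha(t)=0$ is automatic, not $\partial_t\alpha(t)=0$); since $[\partial_t^2\eta_0(t)]=\partial_t^2(v+tc)=0$, you can choose a $\tau$-equivariant family $a(t)$ with $d_\Sigma a(t)=\partial_t\alpha(t)$ and take $\gamma=\alpha(t)+a(t)\,d\phi$ as the closed representative; this does not change $d\phi\wedge\gamma$, so your conclusion stands.
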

\begin{proof}
These assertions basically follow from the Wang exact sequence:
\begin{align}\label{wang_exact}\xymatrix{
\cdots \ar[r]  &\,\HD{0}(\Sigma) \ar[r]  &\,\HD{1}(Y_\tau) \ar[r]  &\,\HD{1}(\Sigma) \ar[r]^-{\tau^*-\bo}  &\,\HD{1}(\Sigma) \ar[r]  &\,\HD{2}(Y_\tau) \ar[r]  &\,\HD{2}(\Sigma) \ar[r]  &\cdots
}\end{align}
which can be proved by the Mayer--Vietoris sequence.  Explicit construction of the differential forms will be given in Section \ref{example_product}.

\end{proof}
By the K\"unneth formula, the de Rham cohomologies of the $4$-manifold $X=S^1\times Y_\tau$ is then given as follows:
\begin{align*}
\HD{1}(X) &\cong \spn\{ dt, d\phi \}\oplus \ker\big( (\tau^*-\bo):\HD{1}(\Sigma)\to\HD{1}(\Sigma) \big) ~, \\
\HD{2}(X) &\cong \big(dt\wedge\HD{1}(Y_\tau)\big) \oplus \HD{2}(Y_\tau) ~, \\
\HD{3}(X) &\cong \spn\{ d\phi\wedge\omega_\Sigma, dt\wedge\omega_\Sigma \}\oplus \cok\big( (\tau^*-\bo):\HD{1}(\Sigma)\to\HD{1}(\Sigma) \big) ~.
\end{align*}
For a compact symplectic $4$-manifold, the only interesting Lefschetz map is the one from $\HD{1}(X)$ to $\HD{3}(X)$.  In the current case, the map is determined by the third item of Proposition \ref{prop_mapping_torus}.  With the help of Lemma \ref{lem_linear}, Theorem \ref{resolution} leads to the following proposition.

\begin{prop}\label{prop_mapping_torus1}
Suppose that $\Sigma$ is a closed surface, $\tau$ is a monodromy, and $\om_\Sigma$ is a $\tau$-invariant area form.  Then the $4$-manifold $X= S^1\times Y_\tau = S^1 \times \left(\Sigma \times_\tau S^1\right)$ with the symplectic form $\omega = dt\wedge d\phi + \om_\Sigma$ has the following properties:
\begin{enumerate}
\item Consider $\tau^*-\bo$ acting on $\HD{1}(\Sigma)$.  The dimension of ${\ker(\tau^*-\bo)}/{\big( \ker(\tau^*-\bo)\cap\im(\tau^*-\bo) \big)}$ is even, and denote it by $2p$.  Let $q+p$ with $q\geq p$ be the dimension of $\ker(\tau^*-\bo)$ and $q-p$ be the dimension of $\ker(\tau^*-\bo)\cap\im(\tau^*-\bo)$.
\item $\dim\HD{1}(X) = \dim\HD{3}(X) = q+p+2$ and $\dim\HD{2}(X) = 2q+2p+2$.
\item $\dim\PHPR{2}(X) = \dim\PHPL{2}(X) = 3q+p+1$ and $\dim\PHDP{1}(X) = \dim\PHDM{1}(X) = q+p+2$.
\end{enumerate}
\end{prop}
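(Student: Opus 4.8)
The plan is to reduce the whole statement to de Rham data on $\Sigma$ and the mapping torus $Y_\tau$ together with one Lefschetz map on $X$, and then to quote Corollary~\ref{cor_dimension4} and Corollary~\ref{cor_compact}. Item~1 is linear algebra: the cup product makes $\HD{1}(\Sigma)$ a symplectic vector space (non-degenerate by Poincar\'e duality on the closed oriented surface $\Sigma$, skew since $1$ is odd), on which $\tau^*$ acts as a linear symplectomorphism because $\tau$ is orientation preserving; Lemma~\ref{lem_linear} applied with $A=\tau^*$ identifies $\ker(\tau^*-\bo)\cap\im(\tau^*-\bo)$ with the radical of the pairing restricted to $\ker(\tau^*-\bo)$, so $\ker(\tau^*-\bo)/\bigl(\ker(\tau^*-\bo)\cap\im(\tau^*-\bo)\bigr)$ carries an induced non-degenerate skew form and has even dimension $2p$, which fixes the notation $q,p$. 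For item~2, Proposition~\ref{prop_mapping_torus} gives $\dim\HD{1}(Y_\tau)=1+\dim\ker(\tau^*-\bo)=q+p+1$, and since $\tau^*-\bo$ acts on the even-dimensional $\HD{1}(\Sigma)$ we have $\dim\cok(\tau^*-\bo)=\dim\ker(\tau^*-\bo)=q+p$, hence $\dim\HD{2}(Y_\tau)=q+p+1$, while $\dim\HD{0}(Y_\tau)=\dim\HD{3}(Y_\tau)=1$. The K\"unneth formula for $X=S^1\times Y_\tau$ then gives $\dim\HD{1}(X)=\dim\HD{3}(X)=q+p+2$ and $\dim\HD{2}(X)=2(q+p+1)=2q+2p+2$.

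The step I expect to be the crux is computing $\ker\bigl(L\colon\HD{1}(X)\to\HD{3}(X)\bigr)$. Using the K\"unneth decompositions $\HD{1}(X)=\BR\,dt\oplus\HD{1}(Y_\tau)$ and $\HD{3}(X)=\bigl(dt\w\HD{2}(Y_\tau)\bigr)\oplus\HD{3}(Y_\tau)$, together with $\om=dt\w d\phi+\om_\Sigma$, one finds
\begin{align*}
\om\w(c\,dt+\beta)=dt\w\bigl(c\,\om_\Sigma+d\phi\w\beta\bigr)+\om_\Sigma\w\beta ,
\end{align*}
with the two terms lying in the two K\"unneth summands of $\HD{3}(X)$. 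Hence $c\,dt+\beta\in\ker L$ iff (i) $c\,\om_\Sigma+d\phi\w\beta=0$ in $\HD{2}(Y_\tau)$ and (ii) $\om_\Sigma\w\beta=0$ in $\HD{3}(Y_\tau)$. Wedging (i) with $d\phi$ and using $d\phi\w d\phi=0$ leaves $c\,(d\phi\w\om_\Sigma)=0$ in $\HD{3}(Y_\tau)\cong\BR$; since $\int_{Y_\tau}d\phi\w\om_\Sigma\neq0$ (integrate over the fibres of $Y_\tau\to S^1$), this forces $c=0$, and then (i) says $\beta$ lies in $\ker\bigl(d\phi\w-\colon\HD{1}(Y_\tau)\to\HD{2}(Y_\tau)\bigr)$, a space of dimension $1+(q-p)$ by Proposition~\ref{prop_mapping_torus}(3). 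Condition (ii) is one further linear condition on that space, and it is not identically zero because $d\phi$ lies in the space while $\om_\Sigma\w d\phi\neq0$; so it drops the dimension by exactly one, giving $\dim\ker\bigl(L\colon\HD{1}(X)\to\HD{3}(X)\bigr)=q-p$. Since $\dim\HD{1}(X)=\dim\HD{3}(X)$, the rank-nullity count then also gives $\dim\cok\bigl(L\colon\HD{1}(X)\to\HD{3}(X)\bigr)=q-p$.

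It remains to assemble item~3. Because $X$ is closed symplectic, $[\om^2]\neq0$, so $L\colon\HD{0}(X)\to\HD{2}(X)$ is injective and $L\colon\HD{2}(X)\to\HD{4}(X)$ is surjective; hence $\ker\bigl(L\colon\HD{0}\to\HD{2}\bigr)=0$, $\cok\bigl(L\colon\HD{2}\to\HD{4}\bigr)=0$, and both $\dim\cok\bigl(L\colon\HD{0}\to\HD{2}\bigr)$ and $\dim\ker\bigl(L\colon\HD{2}\to\HD{4}\bigr)$ equal $\dim\HD{2}(X)-1=2q+2p+1$. Feeding these together with $\dim\ker\bigl(L\colon\HD{1}\to\HD{3}\bigr)=\dim\cok\bigl(L\colon\HD{1}\to\HD{3}\bigr)=q-p$ into Corollary~\ref{cor_dimension4} gives
\begin{align*}
\dim\PHPR{2}(X)=(q-p)+(2q+2p+1)=3q+p+1 , \qquad \dim\PHPL{2}(X)=(2q+2p+1)+(q-p)=3q+p+1 ,
\end{align*}
while Corollary~\ref{cor_compact} with $p=0$ gives $\PHDP{1}(X)\cong\HD{1}(X)$ and $\PHDM{1}(X)\cong\HD{3}(X)$, both of dimension $q+p+2$. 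The stated equalities between the two filtered/primitive cohomologies of each type then hold automatically, which finishes item~3.
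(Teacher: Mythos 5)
Your proof is correct and follows essentially the same route as the paper, which simply asserts that the result follows from Lemma \ref{lem_linear}, Proposition \ref{prop_mapping_torus} and Theorem \ref{resolution} without writing out the details. Your explicit computation of $\ker\bigl(L:\HD{1}(X)\to\HD{3}(X)\bigr)$ — in particular isolating the second condition $\om_\Sigma\wedge\beta=0$, which cuts the $\bigl(1+(q-p)\bigr)$-dimensional space $\spn\{d\phi\}\oplus\bigl(\ker(\tau^*-\bo)\cap\im(\tau^*-\bo)\bigr)$ down to dimension $q-p$ — correctly supplies the step the paper leaves implicit, and the resulting dimensions match.
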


We remark that the dimensions of the de Rham cohomologies only depend on the dimension of the $\tau^*$-invariant subspace of $\HD{1}(\Sigma)$.  The dimensions of the primitive cohomologies involve the degeneracy of the intersection pairing on the $\tau^*$-invariant subspace of $\HD{1}(\Sigma)$.  We will return to this example in Section \ref{example_product} to demonstrate aspects of the product structures which we shall describe next.


\section{$A_\infty$-algebra structure on filtered forms}\label{sec_a_infinity}

The exact triangle \eqref{Tp} relates the filtered cohomologies closely with the de Rham cohomologies through Lefschetz maps.  It is thus tempting to think that some of the algebraic properties of the de Rham cohomology should also be present for filtered cohomologies.  For instance, an important property of the de Rham cohomology is its ring structure with the product operation taken to be the exterior product on forms.  Underlying this ring structure is the standard differential graded algebra on the space of differential forms, $(\CA^*, \w, d)$, with the two operations being the exterior product and the exterior derivative.  So could the filtered cohomology groups also be rings?  As we shall see in this section, the answer turns out to be yes.  However, there is not a differential graded algebra for filtered forms.  What we have instead is a generalization, that of an $A_\infty$-algebra on the space of $p$-filtered forms.

Let us first recall the definition of an $A_\infty$-structure (see, for example \cite{Stas,Keller}).  An $A_\infty$-algebra is a $\BZ$-graded vector space $\A= \oplus_{j\in \BZ}\, \A^j\,$, with graded maps,
$$m_k: \A^{\otimes k} \to \A~, ~~~~~~k =1, 2, 3, \ldots $$
of degree $2-k$ that satisfy the strong homotopy associative relation:
\begin{equation}\label{Leialg}
\sum_{r,\, t \,\geq \,0\,,\, s>0} \, (-1)^{r+s\, t} \; m_{r+t+1} \left(\bo^{\otimes r} \otimes m_s \otimes \bo^{\otimes t} \right)  = 0~,
\end{equation}
for each $k=r+s+t\,$.   Here, when acting on elements, the standard Koszul sign convention applies:
\begin{align}\label{signc}
(\varphi_1 \otimes \varphi_2)(v_1 \otimes v_2) &= (-1)^{|\varphi_2||v_1|} \varphi_1(v_1) \otimes \varphi_2(v_2)~,
\end{align}
where $\varphi_i$ are graded maps, $v_i$ are homogeneous elements, and the absolute value denotes their degree.

Explicitly, relation \eqref{Leialg} implies the following for the first three $m_k$ maps:
\begin{itemize}
\item $m_1: \A \to \A$ satisfies $m_1 m_1 = 0\,$.   Since $m_1$ increases the degree of the grading by one and squares to zero,  it is a differential with $(\A, m_1)$ a differential complex.
\item $m_2: \A^{\otimes 2} \to \A$ satisfies 
\begin{equation}\label{Leialg2}
m_1 m_2 = m_2 \left( m_1 \otimes \bo + \bo \otimes m_1\right)~.
\end{equation}
Here, $m_2$ preserves the grading, so it is considered a multiplication operator in $\A$.  With $m_1$ as the differential, condition \eqref{Leialg2} is just the requirement that the Leibniz product rule holds.  
\item $m_3: \A^{\otimes 3} \to \A$ satisfies 
\begin{equation}\label{Leialg3}
m_2 \left( \bo \otimes m_2 - m_2 \otimes \bo \right) =  m_1 m_3 + m_3 \left(m_1 \otimes \bo \otimes \bo + \bo \otimes m_1 \otimes + \bo \otimes \bo \otimes m_1 \right)
\end{equation}
The left-hand-side measures the associativity of the multiplication $m_2$.  Equation \eqref{Leialg3} effectively stipulates that $m_2$ is associative up to homotopy.
\end{itemize}

Let us note that a differential graded algebra is just a special case of an $\Ain$-algebra with the multiplication $m_2$ being associative, and hence, $m_k=0$ for all $k\geq 3$.  Moreover, even though the multiplication $m_2$ is in general not associative on $\A$, it is always associative on the associated homology $H^*\A = H^*(\A, m_1)$.  This follows directly from \eqref{Leialg3}, since acting on elements of $H^*(\A, m_1)$ which are $m_1$-closed, the right-hand-side is zero modulo the $m_1$-exact term, $m_1m_3$.

We now construct an $\Ain$-algebra on $p$-filtered forms.  We will denote it by $\Fp$.  The first step is to specify the $\Fp^j$ subspaces.  We shall use the assignment suggested by the $p$-filtered elliptic complex \eqref{fecomplex} and its associated filtered cohomology
$$F^pH = \{\FHP{p}{0}, \FHP{p}{1}, \ldots, \FHP{p}{n+p}, \FHM{p}{n+p}, \FHM{p}{n+p-1}, \ldots, \FHM{p}{0} \}~$$  
which consists of $2(n+p)+1$ distinct objects.  Assigning each to be the homology of a subspace, the nontrivial $\Fp^j$ subspaces should have degree in the range $0\leq j \leq 2(n+p)+1\,$. Specifically, we shall label the subspaces in the following way.  (See also Table \ref{Asub}.)
\begin{align}\label{Asubdef}\begin{split}
A_j  \in \FO{p}{j}=\Fp^j     \qquad &\qquad\text{for }  0\leq j \leq n+p~, \\
\bA_j \in \BFO{p}{j}=\Fp^{2n+2p +1 - j}  \qquad &\qquad\text{for } 0\leq j \leq n+p~.
\end{split}\end{align}
For clarity, since a $p$-filtered $j$-form may be in either $\Fp^j$ or $\Fp^{2n+2p+1-j}$ subspace, we have distinguished the two spaces by adding a bar to denote those $j$-forms in $\Fp^{2n+2p+1-j}$, i.e. $\bA_j \in \BFO{p}{j}=\Fp^{2n+2p+1-j}$.  We will follow this convention for the rest of this paper as well.
\begin{table}[t]
\begin{center}
\renewcommand{\tabcolsep}{.1cm}
\begin{tabular}{c | c | c | c | c | c | c | c | c | c}
$\Fp^0$ & $\Fp^1$ & \ldots &  $\Fp^{n+p-1}$ & $\Fp^{n+p}$ & $\Fp^{n+p+1}$ & $\Fp^{n+p+2}$ & \ldots &  $\Fp^{2n+2p}$ & $\Fp^{2n+2p+1}$\\
\hline
$\FO{p}{0}$ & $\FO{p}{1}$ & \ldots &  $\FO{p}{n+p-1}$ & $\FO{p}{n+p}$ & $\BFO{p}{n+p}$ & $\BFO{p}{n+p-1}$ & \ldots &  $\BFO{p}{1}$ & $\BFO{p}{0}$ \\
\end{tabular}
\end{center}
 \caption{The $\Fp^j$ subspaces of a $p$-filtered graded algebra $\Fp$ following the notation of \eqref{Asubdef}.
\label{Asub}} 
\end{table} 

Further, mimicking closely the filtered elliptic complex, we choose the differential of the $\Ain$-algebra $d_j: \Fp^j \to \Fp^{j+1}$, i.e. the $m_1$ map, to be as follows.
\begin{align}\label{dop}
d_j  =
\begin{cases}
~\ddp &  \text{if~~ } 0\leq j < n+p-1~,\\
-\dpp\dpm & \text{if~~ } j = n+p~,\\
-\ddm & \text{if~~ } n+p+1 \leq j \leq 2n + 2p +1
\end{cases}
\end{align}
This differential clearly satisfies $d_{j+1} d_j =0\,$ on the space $\{\FO{p}{*}, \BFO{p}{*}\}\,$.  It only differs from the differential operators of the elliptic complex by a negative sign in front of the ``minus" operators $\dpm$ and $\ddm$.  We will see that these negative signs are needed for satisfying the Leibniz rule conditions in Section \ref{subsec_Leibniz} below.

\subsection{Product on filtered forms}\label{subsec_prod}

The symplectic elliptic complex \eqref{fecomplex} motivated the definition of the grading and the $m_1$ map.  To obtain the $m_2$ multiplication map, we turn to the long exact sequence of cohomology  \eqref{longexactseq} and its underlying chain of short exact sequences \eqref{ccomplex1e}.  These exact sequences are suggestive of how to define a product on $\FO{p}{*}$ for they contain maps between $\FO{p}{*}$ and $\CA^*$ such as $\pil{p}\,,\, \rstar\,, \lpod\,,$ and  $\psdl$.  So to define a product on filtered forms, we can first map $\FO{p}{*}$ to $\CA^*$, then apply the wedge product on $\CA^* \otimes \CA^*$, and finally map the resulting form back to $\FO{p}{*}$ with the desired grading.  (See Figure \ref{lmap38} for more details.)   In this way, we are led to defining the following product operation on $\Fp=\{\FO{p}{*},\BFO{p}{*}\}$:
\begin{defn} \label{defprod}  The product $\times: \Fp^j \otimes \Fp^k \to \Fp^{j+k}$ is defined as follows: 
\begin{align}
A_j \times  A_k & = \pil{p} (A_j \w A_k) \label{prod12}\\ 
& \quad + \pil{p} \rstar\left[-d\, \lpo(A_j \w A_k) + (\lpod A_j) \w A_k + (-1)^j A_j \w (\lpod A_k)\right] \nonumber\\
A_j \times \bA_k & = (-1)^j \rstar \left(A_j \w (\rstar \bA_k)\right)\label{prod3}\\
\bA_j \times A_k & = \rstar \left((\rstar \bA_j)\w A_k\right) \label{prod4}\\
\bA_j \times \bA_k &  = 0 \label{prod5}
\end{align}
where we have used the notation of \eqref{Asubdef} denoting $A_j \in \Fp^j$, and $\bA_j \in \Fp^{2n+2p+1-j}$ for  $0\leq j \leq n+p\,$.
\end{defn}

\begin{figure}
\begin{align*}
\xymatrix
@R-10pt
@H+5pt
{& Column& A & B & C & D & \\
Grading & & &  &  \vdots\ar[d]_{d}  &  \vdots\ar[d]_{\ddp} & \\
2 & & &\;0\ar[r]  & \; \CA^{2} \ar[r]^{\pil{1}} \ar[d]_{d} &\; \FO{1}{2} \ar[r] \ar[d]_{\ddp}& \; 0\\
3 & & &\;0\ar[r]  & \; \CA^{3} \ar[r]^{\pil{1}} \ar[d]_{d} &\; \FO{1}{3} \ar[r] \ar[d]_{\ddp}& \; 0\\
4& &\;0\ar[r] & \; \CA^{0} \ar[r]^{L^2}\ar[d]_{d} & \; \CA^{4} \ar[r]^{\pil{1}} \ar[d]_{d}&\; \FO{1}{4} \ar[r] \ar[d]_{\ddp} & \; 0\\
5& &\;0\ar[r] & \; \CA^{1} \ar[r]^{L^2}\ar[d]_{d} & \; \CA^{5} \ar[r]^{\pil{1}} \ar[d]_{d}&\; \FO{1}{5} \ar[r] & \; 0\\
&&\;0\ar[r] & \; \CA^{2} \ar[r]^{L^2} \ar[d]_{d}& \; \CA^{6} \ar[r]\ar[d]_{d} & \; 0& \\
6& \;0\ar[r] & \; \BFO{1}{5} \ar[r]^{\rstar}\ar[d]_{\ddm} & \; \CA^{3} \ar[r]^{L^2}\ar[d]_{d} &\; \CA^7 \ar[r] \ar[d]_{d}& \; 0&\\
7& \;0\ar[r] & \; \BFO{1}{4} \ar[r]^{\rstar}\ar[d]_{\ddm} & \; \CA^{4} \ar[r]^{L^2}\ar[d]_{d} &\; \CA^8 \ar[r] \ar[d]_{d}& \; 0&\\
8& \;0\ar[r] & \; \BFO{1}{3} \ar[r]^{\rstar} \ar[d]_{\ddm} & \; \CA^{5}\ar[d]_{d} \ar[r] &\; 0 & &\\
9&\;0\ar[r] & \; \BFO{1}{2} \ar[r]^{\rstar} \ar[d]_{\ddm} & \; \CA^{6}\ar[d]_{d} \ar[r] &\; 0 & &\\
&   & \vdots  & \vdots & &
}
\end{align*}
\caption{Consider as an example the above commutative diagram of Lemma \ref{ccomplex1} in dimension $2n=8$ for the degree two Lefschetz map which involves the $p=1$ filtered forms $\{\FO{1}{*}, \BFO{1}{*}\}$.  The filtered product of Definition \ref{defprod} can be heuristically understood as first mapping the filtered forms in Columns A and D into Columns B and C.  Once in the middle two columns, the wedge product can be applied and then the resulting form can be projected back to the outer columns.  For the case of $\FO{p}{j} \times \FO{p}{k}$ where $j+k > n+p$, the product crosses the middle row of the diagram which notably is without filtered forms and therefore has no grading assignment.  Hence,  in order to obtain the desired product grading of $j+k>n+p$, the definition of the product must involve a derivative map which shifts forms down by a row.  The three terms in \eqref{prod22} correspond to the three different ways one can apply the derivative map to a product of two filtered forms.} \label{lmap38}
\end{figure}

Let us note that the product of $p$-filtered forms $\FO{p}{*}$ in \eqref{prod12} simplifies depending on the value of $j+k$.  For if $j+k> n+p$, then $\pil{p}(A_j\w A_k)=0$.  On the other hand, the terms in the bracket of \eqref{prod12} become trivial when $j+k\leq n+p$.  Hence, we can write
\begin{numcases}{A_j \times A_k =}  
\pil{p} (A_j \w A_k)  &\!\!\!\!\!\!\!if~$j+k \leq n + p$~~~~~~~~\label{prod21}\\
{\pil{p}} \rstar\Big[-d\, \lpo(A_j \w A_k) &  \nn\\
\qquad\quad + (\lpod A_j) \w A_k + (-1)^j A_j \w (\lpod A_k) \Big] &\!\!\!\!\!\!\!if~$j+k >\!n+p$~~~~~~~~\label{prod22}
\end{numcases}

Notice also that the expressions on the right hand side of \eqref{prod3} and \eqref{prod4} are automatically $p$-filtered.  This can be seen simply by applying the $p$-filtered condition \eqref{pistarf} and using \eqref{rsak} and $(\rstar)^2 = \bo\,$.  Furthermore, the product $A_j \times \bA_k$ of \eqref{prod3} is identically zero unless $j \leq k\,$.  (Similarly, for \eqref{prod4}, a non-trivial product only occurs for $k\leq j\,$.)  This property is natural since the product $\Fp^j \times \Fp^k = 0$ if $j+k > 2n+2p+1$, as subspaces with grading greater than $2n+2p+1$ are defined to be the empty set.  This also explains why the product in \eqref{prod5} is trivial.  Lastly, the factor of $(-1)^j$ in \eqref{prod3} ensures that the product is graded commutative. That is, 
$$\Fp^j \times \Fp^k = (-1)^{jk} \Fp^k \times \Fp^j~.$$

Now we can check that our definition of the filtered product $\times$ is consistent with the exact triangle of \eqref{Tpth}.   At the level of cohomology, the long exact sequence \eqref{longexactseq} locally has the following form (setting $r=p+1$ in \eqref{longexactseq})
\begin{align}\label{wm01} \xymatrix{
\; \ldots \ar[r] &\;\HD{j-2p-2} \;\ar[r]^-{L^{p+1}}  &\;\HD{j} \;\ar[r]^-{f}  &\; F^pH^{j} \;\ar[r]^-{g}  &\;\HD{j-2p-1} \;\ar[r]^-{L^{p+1}}  &\;\HD{j+1} \ar[r]& \; \ldots
} \end{align}
where
\begin{align}\label{wm03}
(\,f\,,\,g\,) &= \begin{cases}
(\;\pil{p}~,~ L^{-(p+1)}\, d\;) &\text{for }j\leq n+p ~, \\
(\;\pip \,\rstar\, d\,L^{-(p+1)}~,~ \rstar\;) &\text{for }j>n+p ~,
\end{cases}\end{align}
and we have denoted the filtered cohomology by
\begin{align}\label{fphj}
F^pH^{j} & =  
\begin{cases} 
\FHP{p}{j} &\text{for }j\leq n+p ~, \\
\FHM{p}{2n+2p+1-j} & \text{for }j>n+p ~.
\end{cases}
\end{align}
Heuristically, we can view the product of two filtered cohomologies as tensoring two long exact sequences {\it locally} in the following way:
\begin{align}\begin{split} \label{wmd} \xymatrix
@C=-5pt
{
&\; \HD{j-2p-2} \ar[d]_-{L^{p+1}} & &\;\HD{k-2p-2} \ar[d]_-{L^{p+1}}  & {~~~~~~~~~~~~~~}  &\;\HD{j+k-2p-2} \ar[d]_-{L^{p+1}}\\
&\; \HD{j} \ar[d]_-{f}&{\otimes} &\;\HD{k}\;~~ \ar[d]_-{f} \ar[rr]^-{\wedge}  &  &~~\;\HD{j+k} \ar[d]_-{f} \\
&\; F^pH^j \ar[d]_-{g}&{\otimes}&\; F^pH^k\; \ar[d]_-{g} \ar[rr]^-{\times}  &  &\; F^pH^{j+k} \ar[d]_-{g} \\
&\; \HD{j-2p-1} \ar[d]_-{L^{p+1}}&{\otimes} &\;\HD{k-2p-1} \ar[d]_-{L^{p+1}} \ar[rr]^-{\Massey{\;}{\;}}  &  &\;\HD{j+k-2p-1} \ar[d]_-{L^{p+1}} \\
&\; \HD{j+1} & &\;\HD{k+1}  &  &\;\HD{j+k+1} 
} \end{split}\end{align}
In above, the product at the top, $\HD{j} \otimes \HD{k} \to \HD{k+j}$, is clearly just the standard wedge product.  The product at the bottom, $\HD{j-2p-1} \otimes \HD{k-2p-1} \to \HD{j+k-2p-1}$, however does not have the correct degrees for it to be a wedge product.   Instead, for our purpose, it has the interpretation as the standard Massey triple product with the middle element of the triple product fixed to be $\om^{p+1}$.  To see this, since our concern is the filtered product, we are mainly interested in the subset of elements of $\HD{j-2p-1}$ and $\HD{k-2p-1}$ that are in the image of $g$ from $F^pH^j$ and $F^pH^k$, respectively.  By the exactness of the sequences, these elements are in the kernel of the Lefschetz map:
\begin{align*}
[\xi_{j-2p-1}]&\in\ker[L^{p+1}:\HD{j-2p-1}\to\HD{j+1}]  &\text{and}&
&[\xi_{k-2p-1}]\in\ker[L^{p+1}:\HD{k-2p-1}\to\HD{k+1}] ~.
\end{align*}
Therefore, there must exist an $\eta_{j}\in\CA^{j}$ and an $\eta_{k}\in\CA^{k}$ such that
\begin{align*}
d\eta_{j} &= \omega^{p+1}\w\xi_{j-2p-1}  &\text{and}&  &d\eta_k &= \omega^{p+1}\w\xi_{k-2p-1} ~.
\end{align*}
Given this, it is then natural to consider the Massey triple product $\langle\xi_{j-2p-1},\, \om^{p+1},\, \xi_{k-2p-1}\rangle$.  With the symplectic structure element $\om^{p+1}$ kept fixed, this Massey triple product then defines what we shall simply call the Massey product:
\begin{align}\label{wm02}
\Massey{\xi_{j-2p-1}}{\xi_{k-2p-1}} = \xi_{j-2p-1}\wedge\eta_{k} + (-1)^j\eta_{j}\w\xi_{k-2p-1}\in\frac{\HD{*}(M)}{\CI(\xi_{j-2p-1},\xi_{k-2p-1})} ~,
\end{align}
where $\CI(\xi_{j-2p-1},\xi_{k-2p-1})$ is the ideal generated by $\xi_{j-2p-1}$ and $\xi_{k-2p-1}$.  We note that this Massey product is only well-defined on the quotient since different choices of $\eta_j$ and $\eta_k$ may differ by a $d$-closed form.

We can now ask whether the filtered product $\times$ is compatible with the wedge and Massey products that surround it in \eqref{wmd}.  For the filtered product we defined in Definition \eqref{defprod}, the diagram in \eqref{wmd} in fact  commutes.  The precise statement of this is given in the following theorem whose proof is given in Appendix \ref{apx_product}.
\begin{thm}
Let $(M,\omega)$ be a symplectic manifold.  The product operator $\times$ on $F^pH^*(M)$ is compatible with the topological products.  That is, it satisfies the following properties:   
\begin{enumerate}
\item (Wedge product) For any two $[\xi_j]\in\HD{j}(M)$ and $[\xi_k]\in\HD{k}(M)$, $f(\xi_j\wedge\xi_k) = f(\xi_j)\times f(\xi_k)$.  The equality is considered on the filtered cohomology class corresponding to $f:\HD{j+k}(M)\to F^pH^j\,$;
\item (Massey product) For any two $[A]\in F^pH^*(M)$ and $[A']\in F^pH^*(M)$, $\Massey{g(A)}{g(A')} = g(A\times A')$.  To be more precise, the equality is considered on $\HD{*}(M)/\CI(g(A),g(A'))$;
\end{enumerate}
where the maps $f$ and $g$ are defined by \eqref{wm03} and the Massey product is defined by \eqref{wm02}.
\label{compthrm}
\end{thm}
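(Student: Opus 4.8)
The plan is to verify both assertions by direct computation from the explicit data already in hand: the product $\times$ of Definition~\ref{defprod}, the maps $f,g$ of \eqref{wm03}, and the Massey product \eqref{wm02}. The algebraic toolkit is elementary: the Lefschetz splitting $d=\dpp+L\dpm$; the complementary projector identities $\bo=\pil{p}+L^{p+1}\lpo$ \eqref{relation1} and $\bo=\pips+\lpo L^{p+1}$ \eqref{relation2}; the reflection relations $\lpo=\rstar L^{p+1}\rstar$ \eqref{rsL}, $(\rstar)^2=\bo$, and $\rstar A_k=L^{n-k}A_k$ \eqref{rsak}; the fact that $d$ commutes with $L$; Lemma~\ref{Leibniz02}, which controls the difference $d\lpo-\lpo d$; and the observation that a cocycle $A$ representing a class in $\FHP{p}{j}$ with $j<n+p$ is $\ddp$-closed, whence $dA=L^{p+1}\lpo dA=\opo\w(\lpo dA)$. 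Since $f$, $g$, and $\times$ all branch at degree $n+p$, I would organize the proof by the cases $j+k\le n+p$ and $j+k>n+p$, treating the middle degree $n+p$ and the barred factors $\bA\in\BFO{p}{*}$ separately -- the former by the isomorphisms \eqref{fphm}--\eqref{fphmm} with primitive cohomologies, the latter by \eqref{prod3}--\eqref{prod5} together with $(\rstar)^2=\bo$.

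For the wedge-product statement, when $j+k\le n+p$ one has $f=\pil{p}$ on all three spaces, and the claim is an identity already at the level of forms: the difference $\xi_j\w\xi_k-(\pil{p}\xi_j)\w(\pil{p}\xi_k)$ is divisible by $\opo$ and so killed by the outer $\pil{p}$, giving $\pil{p}(\xi_j\w\xi_k)=\pil{p}\!\bigl((\pil{p}\xi_j)\w(\pil{p}\xi_k)\bigr)$, whose right-hand side is $f(\xi_j)\times f(\xi_k)$ by \eqref{prod21}. When $j+k>n+p$ one has $f=\pil{p}\rstar d\lpo$ on $\HD{j+k}$; here the computation of $\pil{p}\rstar d\lpo(\xi_j\w\xi_k)$ -- using Lemma~\ref{Leibniz02} to move $d$ past $\lpo$, $d=\dpp+L\dpm$ to distribute derivatives onto the factors, and the fact that $\xi_j,\xi_k$ are $d$-closed -- is designed to reproduce the bracketed expression of \eqref{prod22} evaluated on $\pil{p}\xi_j$ and $\pil{p}\xi_k$, up to a $\ddm$-exact term that is trivial in $F^pH$.

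For the Massey-product statement, take filtered representatives $A\in\FO{p}{j}$ and $A'\in\FO{p}{k}$ with $j,k<n+p$. Then $A,A'$ are $\ddp$-closed, so with $\xi:=g(A)=\lpo dA$ and $\xi':=g(A')=\lpo dA'$ we have $\opo\w\xi=dA$ and $\opo\w\xi'=dA'$; thus $\eta_j:=A$ and $\eta_k:=A'$ are admissible primitives in \eqref{wm02}, giving $\Massey{g(A)}{g(A')}=\xi\w A'+(-1)^jA\w\xi'$. To compute $g(A\times A')$ one substitutes the relevant branch of the product formula -- \eqref{prod21} when $j+k\le n+p$, \eqref{prod22} when $j+k>n+p$ -- and the matching branch of $g$ from \eqref{wm03}, then expands the outer projector via \eqref{relation1}--\eqref{relation2}, commutes $d$ through $L^{p+1}$, and invokes $\opo\w\xi=dA$ together with $\pil{p}(A\w A')=0$ (valid when $j+k>n+p$); a degree count shows that $\pips$ annihilates any form of degree $\le n-p-1$, which disposes of the remaining cross terms. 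In both ranges this collapses to the single identity
\begin{align*}
g(A\times A')=\Massey{g(A)}{g(A')}-d\lpo(A\w A')~,
\end{align*}
and since the final term is $d$-exact the two sides agree already in $\HD{*}(M)$, hence a fortiori in $\HD{*}(M)/\CI(g(A),g(A'))$. The barred cases reduce by \eqref{prod3}--\eqref{prod4} and $(\rstar)^2=\bo$ to the corresponding unbarred computations.

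I expect the main obstacle to be bookkeeping rather than anything conceptual: one must carry the Koszul signs correctly through \eqref{prod12}--\eqref{prod5}, keep straight which of the two non-commuting projectors $\pil{p},\pips$ acts where, and check in each degree range that every correction term generated by $\bo-\pil{p}=L^{p+1}\lpo$, by $\bo-\pips=\lpo L^{p+1}$, or by Lemma~\ref{Leibniz02} is either $d$-exact or lies in the ideal $\CI(g(A),g(A'))$, so that it drops out in cohomology. The genuinely delicate point is the middle degree $j=n+p$ (or $k=n+p$), where the grading of $\Fp$ doubles back -- the ``shift'' in the diagram of Lemma~\ref{ccomplex1} -- and the naive choice $\eta_j=A$ need not satisfy $d\eta_j=\opo\w g(A)$; there one works instead through the identifications $\FHP{p}{n+p}\cong\PHPR{n-p}$ and $\FHM{p}{n+p}\cong\PHPL{n-p}$ of \eqref{fphm}--\eqref{fphmm} and verifies the statement against the second-order operator $\dpp\dpm$ in place of $\ddp$ or $\ddm$.
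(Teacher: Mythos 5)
Your proposal follows essentially the same route as the paper's Appendix A: the same case split at $j+k\le n+p$ versus $j+k>n+p$ with the barred factors handled via $(\rstar)^2=\bo$, the same use of the projector identities \eqref{relation1}--\eqref{relation2} and Lemma \ref{Leibniz02} (plus the commutation identity \eqref{wm05} you would rederive along the way), the same choice $\eta_j=A$, $\eta_k=A'$ of Massey primitives, and the same conclusion that $g(A\times A')$ and $\Massey{g(A)}{g(A')}$ differ by the $d$-exact term $d\lpo(A\w A')$ and by a $\ddm$-exact term on the wedge side. The plan is correct; only the signs of the exact discrepancy terms (which differ between the two degree ranges in the paper) remain to be pinned down in the bookkeeping you already anticipate.
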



\subsection{Leibniz rules}\label{subsec_Leibniz}

Having defined $m_1=d_j$ and $m_2 = \times$, we now show that they satisfy the Leibniz rule \eqref{Leialg2}:
$$d_{j+k}(\Fp^j \times \Fp^k) = d_j \Fp^j \times \Fp^k + \moj \Fp^j \times d_k \Fp^k~.$$
Note that the differential $d_j$ as given in \eqref{dop} varies with the subspace $\Fp^j$ and can be either a first- or second-order differential operator.  Thus we will need to consider the Leibniz rule condition for different $\Fp^j \times \Fp^k$ cases separately.

\begin{thm}[Leibniz's Rule]\label{LeiRule}
For $A_j, A_k\in \FO{p}{*}$ and $\bA_k \in \BFO{p}{*}$, the following holds:
\begin{align}
\textup{(i)}~&\, \ddp(\ab \times \ac) = d_j \ab \times \ac + \moj \ab \times d_k \ac  \quad  &{\text for~ } j+k < n+p \label{lei1}\\
\textup{(ii)}~& -\dpp\dpm(\ab \times \ac) = d_j \ab \times \ac + \moj \ab \times d_k \ac  \quad  &{\text for~ } j+k = n+p \label{lei2}\\
\textup{(iii)}~& -\ddm(\ab \times \ac) = d_j \ab \times \ac + \moj \ab \times d_k \ac  \quad  &{\text for~ } j+k > n+p \label{lei3}\\
\textup{(iv)}~& -\ddm(\ab\times\bac) = d_j \ab \times \bac + \moj \ab \times (-\ddm) \bac  \quad  &{\text for~ } j\leq k ~~~~~~~~~~ \label{lei4} 
\end{align}
where $d_j=\ddp$ if $j<n+p$ and $d_j= \dpp\dpm$ if $j=n+p$. 
\end{thm}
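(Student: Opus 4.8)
\emph{Strategy.} The plan is to establish the four identities by direct computation, organized according to the piecewise definitions of $d_j$ in \eqref{dop} and of $\times$ in Definition \ref{defprod}; both break at $j+k=n+p$, which is precisely the middle of the elliptic complex \eqref{fecomplex} where the second-order operator $\dpp\dpm$ sits. The tools I would rely on throughout are: the decomposition $d=\dpp+L\,\dpm$ together with $[d,L]=0$; the identity $\ddm=\rstar\,d\,\rstar$ and its corollary $\rstar\,\ddm=d\,\rstar$; the projector decompositions $\bo=\pip+L^{p+1}L^{-(p+1)}$ and its $\rstar$-adjoint $\bo=\pips+\lpo L^{p+1}$ from \eqref{relation1}--\eqref{relation2}; the $p$-filtered condition $L^{p+1}\rstar A=0$ of \eqref{pistarf}; and Lemma \ref{Leibniz02}. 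In each case the uniform tactic is to pull all projections $\pil{p}$ out to the left (they kill anything $\omega^{p+1}$-divisible, since $\pil{p}L^{p+1}=0$), rewrite the differential of a product as $d$ or $\rstar d\rstar$ of a wedge product, and reduce to the ordinary graded Leibniz rule for $(\CA^*,\w,d)$. The Koszul sign convention \eqref{signc} forces the factors of $\moj$, and it is exactly to make the signs and projections close up that the minus signs decorate $\dpm$, $\dpp\dpm$ and $\ddm$ in \eqref{dop}.

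First I would dispose of cases (i) and (iv), which are comparatively routine. In case (i), $j+k<n+p$ forces $d_j=d_k=d_{j+k}=\ddp$ and $\ab\times\ac=\pil{p}(\ab\w\ac)$ by \eqref{prod21}; using $[d,L]=0$ and $\pil{p}L^{p+1}=0$ one rewrites $\ddp\pil{p}(\ab\w\ac)=\pil{p}\,d(\ab\w\ac)$, expands by the ordinary Leibniz rule, and observes that the corrections $(d\ab-\ddp\ab)\w\ac$ and $\moj\,\ab\w(d\ac-\ddp\ac)$ are $\omega^{p+1}$-divisible, hence killed by $\pil{p}$. In case (iv), $\ab\times\bac=\moj\,\rstar(\ab\w\rstar\bac)$, so $-\ddm(\ab\times\bac)=-\moj\,\rstar\,d(\ab\w\rstar\bac)$; the graded Leibniz rule splits this, and I would use $L^{p+1}\rstar\bac=0$ (whence $(d\ab)\w\rstar\bac=(\ddp\ab)\w\rstar\bac$, the difference wedging $\omega^{p+1}$ into $\rstar\bac$) together with $d(\rstar\bac)=\rstar(\ddm\bac)$ to reach the right-hand side; the degenerate sub-case $j=n+p$ (which forces $k=n+p$) has every term vanishing for degree reasons. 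The remaining product types $\bA_j\times A_k$ and $\bA_j\times\bA_k$ then follow from graded commutativity together with $\bA_j\times\bA_k=0$.

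Cases (ii) and (iii) carry the real content, since here the three-term formula \eqref{prod22} is active on the right and the output of $\times$ lands at or beyond the middle of \eqref{fecomplex}. For case (ii), $j+k=n+p$ gives $\ab\times\ac=\pil{p}(\ab\w\ac)=L^{p}C$ where $C$ is the automatically primitive degree-$(n-p)$ Lefschetz component of $\ab\w\ac$, since an $(n+p)$-form has $L^{p}$ as its lowest Lefschetz term; hence $d_{n+p}(\ab\times\ac)=-L^{p}\dpp\dpm C$, while each of $d_j\ab\times\ac$ and $\moj\,\ab\times d_k\ac$ now has total degree $n+p+1>n+p$, so I would expand them via \eqref{prod22}, substitute $d=\dpp+L\dpm$, use the characterization of $C$ and the injectivity of $L$ on $\CB^{n-p}$ (for $p>0$) to reassemble the ``$\dpp$ on one factor, $\dpm$ on the other'' contributions into $\dpp\dpm C$, and match (the extreme cases $j=n+p$ or $k=n+p$, where the relevant $d_j$-term is second-order and so uses \eqref{prod4}, are checked by a short separate computation). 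For case (iii), write $\ab\times\ac=\pil{p}\rstar\tilde A$ with $\tilde A=-d\lpo(\ab\w\ac)+(\lpod\ab)\w\ac+\moj\,\ab\w(\lpod\ac)$; Lemma \ref{Leibniz02} then turns $-\ddm(\ab\times\ac)$ into $-\pil{p}\rstar(d\tilde A)+\pprs d\lpo(\opo\w\tilde A)$. The first term of $\tilde A$ is $d$-exact and drops from $d\tilde A$; for the other two I would use a commutation identity trading $d\,L^{-(p+1)}$ for $L^{-(p+1)}d$ up to a $\dpm$-type correction (proved by comparing Lefschetz decompositions, with an eye on the fact that $L^{-(p+1)}L^{p+1}$ is only a projection), and the surviving corrections, together with the Lemma \ref{Leibniz02} correction $\pprs d\lpo(\opo\w\tilde A)$, should cancel against those generated when $d_j\ab\times\ac$ and $\moj\,\ab\times d_k\ac$ are expanded via \eqref{prod22}; a short parallel argument covers the sub-case $j=n+p$.

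The hard part will be case (iii): it is the one place where the precise design of the product --- the two derivative terms $(\lpod\ab)\w\ac$ and $\moj\,\ab\w(\lpod\ac)$ together with the bookkeeping term $-d\lpo(\ab\w\ac)$ --- must conspire exactly with the correction term of Lemma \ref{Leibniz02}, with the failure of $d$ to commute with $L^{-(p+1)}$, and with the failure of $L^{-(p+1)}L^{p+1}$ to be the identity, so that the computation closes with nothing left over. This is the ``subtle balancing between the definition of $d_j$ and the product'' flagged in the introduction; tracking every Koszul sign and every placement of $\pil{p}$ is where essentially all the labor lies, and once case (iii) is controlled, case (ii) should come out as the degenerate situation in which $\ab\w\ac$ already protrudes exactly one Lefschetz level past the $p$-filtered ceiling.
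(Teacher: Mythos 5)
Your proposal is correct and follows essentially the same route as the paper's proof: a case-by-case direct computation driven by the projector identities \eqref{relation1}--\eqref{relation2}, the decomposition $d=\dpp+L\,\dpm$, the filtered condition \eqref{pistarf}, and $\ddm=\rstar d\,\rstar$, with the $\pil{p}$ killing the $\om^{p+1}$-divisible corrections. The only cosmetic differences are that you invoke Lemma \ref{Leibniz02} in case (iii) where the paper instead expands $\rstar(\ab\times\ac)$ directly via \eqref{relation2}, and in case (ii) you work with the primitive component $C$ rather than first establishing the paper's intermediate identity $\dpp\dpm(\ab\times\ac)=\pprs d\,\lpo d(\ab\w\ac)$; these are reorganizations of the same computation.
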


Before proving the theorem, let us set up some convention that we shall use frequently.  We shall express the decomposition of $\ab, \ac\in \FO{p}{*}$ as
\begin{align*}
A_j &= B_j + \om\w B_{j-2} + \ldots + \om^p \w B_{j-2p}~, \\
A_k &= B_k + \om \w B_{k-2} + \ldots + \om^p \w B_{k-2p}~,
\end{align*}
where the $B$'s are primitive forms and therefore,
\begin{align}\label{Lei011}
L^{-(p+1)} dA_j = \dpm B_{j-2p}~, \qquad L^{-(p+1)} dA_k = \dpm B_{k-2p} ~.
\end{align}

Thus for example, we would write
\begin{align}\label{lei02}
\ddp &\ab \pp \ac + \moj \ab \pp \ddp \ac \nn\\
&=\pprs\Big\{-d\, \lpo(\ddp\ab \w \ac)+ \lpod(\ddp\ab)\w \ac + \moj \ddp\ab \w \lpod\ac \nn\\
&\qquad + \moj\big[ -d\,\lpo(\ab\w\ddp\ac) + \lpod \ab \w \ddp\ac + \moj\ab \w \lpod(\ddp\ac) \big]\Big\}\nn\\
&=\pprs\Big\{-d\, \lpo\big(\ddp\ab\w \ac + \moj \ab \w \ddp\ac\big)-\dpp\dmj\w\ac - \ab \w \dpp\dmk\nn\\
&\qquad\qquad\quad+\moj\big(\dmj \w \ddp\ac + \ddp\ab \w \dmk\big)\Big\}
\end{align}
where the second equality uses the fact that $d(\ddp \ab) = d(d\ab - \opo\dmj) = -\opo \dpp\dmj$
for $\ab\in \FO{p}{*}$.

\begin{proof}[Proof of Theorem \ref{LeiRule}]

\

\noindent Case (i): $\Fp^j \times \Fp^{k}\,,~~ j+k < n+p\,$.
\begin{align*}
\ddp(\ab\w\ac)&= \ddp \pip(\ab\w\ac) = (\pip d)(1-\opo\lpo)(\ab\w\ac)\nn\\
&=\pip d(\ab\w\ac) - \pip\big(\opo \w d\,\lpo(\ab\w\ac)\big)\\
&=\pip\left[(\ddp\ab + \opo \w \dmj) \w \ac + \moj \ab \w (\ddp\ac + \opo\w \dmk)\right]\\
&=\ddp \ab \pp \ac + \moj \ab \w \ddp \ac ~.
\end{align*}

\noindent Case (ii): $\Fp^j \times \Fp^{k}\,,~~ j+k = n+p\,$.

Let us first note that for $j+k = n+p$,
\begin{align}\label{lei21}
\dpp\dpm(\ab\pp\ac) = \dpp\dpm[\pip(\ab\w\ac)] = \pprs d\, \lpo d(\ab\w\ac) ~.
\end{align}
To see this, we can write 
$$\ab\w\ac = \om^{p}\w B_{n-p} + \om^{p+1}\w A'_{n-p-2}$$
for some $B_{n-p}\in \CB^{n-p}$ and $A'_{n-p-2}\in\CA^{n-p-2}$.  
Then, we can calculate both sides of \eqref{lei21} 
\begin{align*}
\dpp\dpm(\ab\pp\ac) &= \dpp\dpm(\om^p\w B_{n-p})= \om^p \w \dpp\dpm B_{n-p} ~, \\
\pprs d\, \lpo d(\ab\w\ac) &= \pprs d (\dpm B_{n-p} + \om \w dA'_{n-p-2}) \\
&= \pprs \dpp\dpm B_{n-p} = \om^p\w\dpp\dpm B_{n-p}
\end{align*}
which verifies \eqref{lei21}.  Furthermore, we have
\begin{align}\label{lei22}
&\pprs d\,\lpo d(\ab\w\ac)\nn\\
&=\pprs d\, \lpo \big[\ddp\ab \w \ac + \opo\dmj \w \ac+\moj(\ab\w\ddp\ac + \ab\w\opo\dmk)\big]\nn\\
&=\pprs d\,\lpo\big[\ddp\ab \w \ac +\moj\ab\w\ddp\ac\big]\nn\\
&\quad +\pprs\big[\dpp\dmj\w\ac -\moj \dmj\w d\ac + \moj dA_j \w \dmk + \ab \w \dpp\dmk\big]\nn\\
&=\pprs d\,\lpo\big[\ddp\ab \w \ac +\moj\ab\w\ddp\ac\big]\nn\\
&\quad +\pprs\big[\dpp\dmj\w\ac -\moj \dmj\w \ddp\ac + \moj \ddp\ab \w \dmk + \ab \w \dpp\dmk\big]
\end{align}
where the last equality is obtained by canceling  $\pprs[\opo \dmj \w \dmk]$.  Note that \eqref{lei22} is precisely equal to the minus of \eqref{lei02}.  With \eqref{lei21}, this proves the Leibniz rule \eqref{lei2}.

\noindent Case (iii):  $\Fp^j \times \Fp^k \,, ~~j,k\leq n+p {\rm ~~and~~} j+k > n+p\,$.

Note first,
\begin{align}\label{lei31}
-\ddm(\ab\pp\ac) = -\pprs d \rstar (\ab\pp\ac) ~.
\end{align}
Now for $j+k > n+p$, we can use (\ref{relation2}) to write
\begin{align*}
\rstar(\ab\pp\ac)&=\pips\big[-d\,\lpo(\ab\w\ac) + \lpo d\ab \w \ac + \moj \ab \w \lpo d\ac\big]\nn\\
&=-d\,\lpo(\ab\w\ac) + \lpo d\big( \opo\lpo(\ab\w\ac) \big)\\
&\qquad+ \dmj\w\ac + \moj \ab \w \dmk\nn\\
&\qquad  - \lpo\left(\opo\dmj \w \ac + \moj \ab \w \opo\dmk\right)\nn ~.
\end{align*}
Since $j+k > n+p$, \eqref{relation1} says that $\opo\lpo(\ab\w\ac) = \ab\w\ac$.  Furthermore, we can substitute  $\opo\dmj\w\ac= (d\ab-\ddp\ab)\w\ac$ and $\ab\w \opo\dmk = \ab \w (d\ac - \ddp\ac)$.  
After some cancellations, we find
\begin{align}
\rstar(\ab\pp\ac)&=-d\, \lpo(\ab\w\ac) + \lpo\big[\ddp\ab\w\ac + \moj \ab \w\ddp\ac\big] \nn\\
&\qquad + \dmj\w\ac + \moj \ab\w\dmk ~. \label{lei32}
\end{align}
By substituting \eqref{lei32} into \eqref{lei31}, we obtain
\begin{align}\begin{split}
-\ddm(\ab\pp\ac)&= -\pprs\Big\{ d\,\lpo\big(\ddp\ab\w\ac + \moj \ab \w\ddp\ac\big)\\
&\qquad\quad +d\big(\dmj\w\ac + \moj \ab\w\dmk\big) \Big\} ~.
\end{split}\end{align}
After applying the derivative on the second line, it gives precisely \eqref{lei02} which proves the Leibniz rule \eqref{lei3}.

\noindent Case (iv): $\Fp^j \pp \Fp^\ell  \,, ~~j\leq n+p {\rm ~~and~~} \ell > n+p\,$.

Let $k=2n+2p+1-\ell$ and $\bac\in\BFO{p}{*}$.  Then we have
\begin{align*}
-\ddm(\ab \pp \bac) &= -\moj \ddm \rstar(\ab \w \,\rstar\bac)= -\moj \rstar d(\ab \w \rstar\bac) \nn\\
& = -\moj \rstar \big[d\ab \w \rstar \bac + \moj \ab \w d\, \rstar\bac\big]\nn\\
&= -\moj \rstar\big[(\ddp\ab + \opo\dmj) \w \, \rstar\bac + \moj \ab \w \rstar\ddm\bac\big]\nn\\
&=(-1)^{j+1}\rstar(\ddp\ab \w \rstar \bac) - \rstar(\ab \w \rstar\ddm\bac)\\
&=\ddp\ab \pp \bac +(-1)^{j} \ab \pp (-\ddm \bac)
\end{align*}
where \eqref{pistarf} is invoked to set $\opo \rstar \bac=0$ for the fourth equality.

\end{proof}

\subsection{Non-associativity of product}

We now analyze the associativity of the product.  In general, $A^i \times (A^j \times A^k) \neq (A^i \times A^j) \times A^k$.  Hence, there is a non-trivial $m_3$ map.  We will show that the induced $m_3$ map satisfies \eqref{Leialg3}.

Due to the definition \eqref{prod12}--\eqref{prod5}, there are three distinct cases when considering the triple product $A^i \times A^j \times A^k$:
\begin{itemize}
\item[(i)] $i+j+k \leq n+p\,$;
\item[(ii)] $i+j+k > n+p$ and $i,j,k\leq n+p\,$;
\item[(iii)] $\max\{i,j,k\}> n+p\,$.
\end{itemize}
We will show for both case (i) and case (iii) that the triple product is associative.  In contrast, case (ii) will be seen to be in general non-associative.  

Consider first case (i). Here,
\begin{align*}
A_i \times (A_j \times A_k) & = A_i \times \Big[(1 - \opo \lpo)(A_j\w A_k)\Big] \\
& = \pil{p} \Big(A_i\w A_j \w A_k  - \opo\w  A_i \w \lpo \big(A_j\w A_k\big)\Big) ~. \\
& = \pil{p} \big(A_i \w A_j \w A_k\big)
\end{align*}
It is not hard to see that $(A_i\times A_j)\times A_k$ is also equal to $\pil{p}(A_i \w A_j \w A_k)$.  Thus, the triple product for $i+j+k \leq n+p$ is associative.

For case (iii), we will show as an example the case where $i > n+p$ and $j+ k \leq n +p$.  Let $\ell= 2n + 2p +1 -i$.  The triple product corresponds to 
\begin{align*}
\bA_\ell \times (A_j \times A_k) & =  \rstar \left(\rstar \bA_\ell \w \pip( A_j \w A_k)\right)\\\
&= \rstar \left(\rstar \bA_\ell \w (1 - \opo\lpo) ( A_j \w A_k)\right)\\
& = \rstar \left(\rstar \bA_\ell \w A_j \w A_k\right)
\end{align*}
since by \eqref{pistarf}, $\opo \rstar \bA_\ell =0\,$.  From definition \eqref{prod3}, we have
\begin{align*}
(\bA_\ell \times A_j) \times A_k  =  \rstar ( \rstar \bA_\ell \w A_j) \times A_k = \rstar \left(\rstar \bA_\ell \w A_j \w A_k\right) ~.
\end{align*}
Therefore, the triple product is associative if $i > n+p$.  In a similar manner, the associativity for the case when $j > n+p$ or $k>n+p$ can also easily be shown.  We also note that if a pair or all three indices are greater than $n+p$, then the triple product is identically zero.

For case (ii), we have the following result.

\begin{prop}\label{mthreea} For $A_i, A_j, A_k \in \FO{p}{*}$ and $i + j + k \geq n+p$, 
\begin{align}
A_i \times (A_j \times A_k) & = -\ddm \pprs \left[A_i \w \lpo(A_j\w A_k)\right] \nn\\
& \quad + \pprs\Big\{ \ddp A_i \w \lpo(A_j \w A_k) + (-1)^{i} A_i \w \lpo (\ddp A_j \w A_k) \nn\\
& \qquad\qquad\qquad + (-1)^{i+j} A_i \w \lpo (A_j \w \ddp A_k) \Big\}\label{ipjk}\\
& \quad + \pprs\Big\{-d \lpo (A_i \w A_j \w A_k) + \dmi \w A_j \w A_k \nn\\
& \qquad\qquad\qquad + \moi A_i \w \dmj \w A_k + \moij A_i \w A_j \w \dmk  \Big\}\nn ~.
\end{align}
\end{prop}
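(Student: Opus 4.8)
The plan is to verify the identity by substituting Definition~\ref{defprod} twice and simplifying. Since we are in case~(ii), $i+j+k>n+p$ with all of $i,j,k\le n+p$, the computation splits according to whether $j+k\le n+p$ or $j+k>n+p$. In the first case $A_j\times A_k=\pil{p}(A_j\w A_k)$ by~\eqref{prod21}, an unbarred element of degree $j+k\le n+p$, so the outer product is again computed by the three-term formula~\eqref{prod22}. In the second case $A_j\times A_k$ lies in $\BFO{p}{2n+2p+1-(j+k)}$, i.e. it is a barred element, so the outer product is instead governed by~\eqref{prod3}, $A_i\times\bA=(-1)^i\rstar(A_i\w\rstar\bA)$; here one also uses that the right-hand side of~\eqref{prod3} is automatically $p$-filtered (by the $p$-filtered condition~\eqref{pistarf}), so that inserting or dropping $\pil{p}$ in front is harmless. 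Both branches must reproduce the same right-hand side~\eqref{ipjk}, which serves as a useful internal consistency check.

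The remaining work is formal manipulation. The identities I expect to use repeatedly are: the orthogonal splittings $\bo=\pip+\lpo L^{p+1}$ and $\bo=\pips+\lpo L^{p+1}$ from~\eqref{relation1}--\eqref{relation2}; the reflection relations $\rstar^2=\bo$ and $\rstar L^{p+1}\rstar=\lpo$ of~\eqref{rsL}; the decomposition $dA_m=\ddp A_m+\opo\w(\lpo dA_m)$ valid for every $A_m\in\FO{p}{m}$ (which follows from~\eqref{dcomp} and~\eqref{Lei011} and is exactly the identity already used in the proof of Theorem~\ref{LeiRule}); the graded Leibniz rule for $d$ on the triple wedge $A_i\w A_j\w A_k$; and Lemma~\ref{Leibniz02}, which rewrites a combination of the form $\pprs\, d(\,\cdot\,)$ as $\ddm\,\pprs(\,\cdot\,)$ plus an explicit $\opo$-correction. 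The leading term $-\ddm\pprs[A_i\w\lpo(A_j\w A_k)]$ of~\eqref{ipjk} is produced precisely in this way: it comes from the $-d\,\lpo$ piece of~\eqref{prod22} (which enters either in forming $A_j\times A_k$ or in forming the outer product, depending on the sub-case), together with the substitution $dA_i=\ddp A_i+\opo\w\dmi$ and one application of Lemma~\ref{Leibniz02} with $Y=A_i\w\lpo(A_j\w A_k)$.

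Concretely, after inserting the formula for $A_j\times A_k$ I would expand every wedge product, push each $d$ through by Leibniz, replace each $dA_m$ by $\ddp A_m+\opo\w\dpm B_{m-2p}$, and then simplify using two recurring facts: $\pil{p}$ annihilates anything carrying a factor of $\opo$, and $\opo\w\rstar(\text{$p$-filtered form})=0$ by~\eqref{pistarf}. The composite projections $\lpo L^{p+1}$ and $L^{p+1}\lpo$ that appear along the way are \emph{not} the identity, but wherever they occur the discarded piece ($\pips(\,\cdot\,)$ or $\pip(\,\cdot\,)$, respectively) is itself killed by a surrounding $\pil{p}$ or $\rstar$, so those projections can be dropped. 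Collecting what remains, after splitting off the $\ddm$-term, should leave exactly the three lines of~\eqref{ipjk}: the $\ddp$-terms of the second line and the $-d\,\lpo$- and $\dpm B$-terms of the third.

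I expect the main obstacle to be bookkeeping rather than anything conceptual. The expansion generates many terms carrying factors of $\opo$ --- some from $\pil{p}X=X-\opo\w\lpo X$, others from $dA_m=\ddp A_m+\opo\w\dpm B_{m-2p}$ --- and one must track precisely which survive a later $\pil{p}$ or $\rstar$, as well as the Koszul signs $(-1)^i$ and $(-1)^{i+j}$ picked up whenever $A_i$ is moved past a derivative. There is also an asymmetry between the two sub-cases: when $j+k>n+p$ the form $A_j\times A_k$ is already a $\pprs[\,\cdots\,]$ expression, so there is an extra layer of the $\pil{p}$/$\pips$ identities to unwind. Matching the resulting collection of terms against~\eqref{ipjk} --- in particular separating the part that reassembles into $-\ddm\pprs[A_i\w\lpo(A_j\w A_k)]$ from the remaining $\ddp$- and $\dpm B$-terms --- is the delicate step.
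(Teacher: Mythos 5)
Your plan is correct and follows essentially the same route as the paper's proof: the same split into the sub-cases $j+k\le n+p$ (where the outer product is computed by \eqref{prod22}) and $j+k>n+p$ (where $A_j\times A_k$ is barred and \eqref{prod3} applies), the same toolkit of \eqref{relation1}--\eqref{relation2}, $\rstar^2=\bo$, the decomposition $dA_m=\ddp A_m+\opo\w(\lpo dA_m)$, and Lemma \ref{Leibniz02} to peel off the leading $-\ddm\,\pprs[A_i\w\lpo(A_j\w A_k)]$ term, with the remaining work being exactly the bookkeeping you anticipate. One small point in your favor: your implicit restriction to $i+j+k>n+p$ is the right domain, since $\pprs$ annihilates $\CA^{n-p-1}$ and $\CA^{n-p-2}$, so the displayed right-hand side degenerates at the boundary $i+j+k=n+p$ where the product is instead just $\pil{p}(A_i\w A_j\w A_k)$.
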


\begin{proof}
Assume first $j+k \leq n+p$, then $A_j \times A_k = \pil{p}(A_j \times A_k).$  Hence,
\begin{align}
A_i \times (A_j \times A_k) & = A_i \times \big[ (1 - \opo\lpo)(A_j \w A_k)\big] \nn\\
& = \pprs \Big[ -d\, \lpo \left(A_i \w A_j \w A_k - A_i \w \opo \lpo(A_j\w A_k) \right) \nn\\
&\qquad\qquad + (\lpo dA_i) \w (1 - \opo \lpo) (A_j\w A_k) \label{ipjk1}\\ 
&\qquad\qquad + \moi A_i\w  \lpo d\big[(1- \opo\lpo) A_j\w A_k\big] \Big] ~. \nn
\end{align}
We will analyze the six terms in \eqref{ipjk1} separately.  For the 2nd, 4th, and 6th term, we obtain
\begin{align}
\pprs &\Big[ d(1-\pips)(A_i \w \lpo(A_j \w A_k) - \dmi \w \opo \lpo (A_j\w A_k) \nn \\
&\qquad\qquad - \moi A_i \w  d\, \lpo(A_j \w A_k)\Big]\nn\\
&= - \ddm \pprs \left[A_i \w \lpo(A_j \w A_k)\right] + \pprs \left[\ddp A_i \w \lpo(A_j \w A_k)\right] ~. \label{ipjk2}
\end{align}
For the first term of the first line, we have used \eqref{relation2}.  For the second line, we have set $\lpo \opo $ equal to one since it is acting on $d\, \lpo(A_j\w A_k) \in \ker(\pips)$ for $j+k \leq n+p$.
For the 3rd and 5th term, we obtain
\begin{align}
\pprs &\left\{ (\lpo dA_i) \w A_j \w A_k  + \moi A_i \w \lpo d(A_j\w A_k)\right\} \nn\\
&=\pprs \Big\{\dmi \w A_j \w A_k + \moi A_i \w \lpo \left[(\ddp A_j + \opo \dmj) \w A_k\right] \nn\\
& \qquad\qquad\qquad\qquad + \moij A_j \w \lpo \left[ A_i \w (\ddp A_k + \opo \dmk)\right] \Big\} ~. \label{ipjk3}
\end{align}
Inserting \eqref{ipjk2} and \eqref{ipjk3} into \eqref{ipjk1} then results in \eqref{ipjk}. 

The remainder case is when $j+k > n+p$, and we have
\begin{align}
A_i \times (A_j \times A_k) &= \moi \pprs \Big[A_i \w \pips \big[ -d \, \lpo(A_j\w A_k)\nn\\ 
&\qquad\qquad\qquad\qquad\qquad\qquad+ \lpo dA_j \w A_k + \moj A_j \w \lpo dA_k\big]\Big]\nn\\
&=\moi \pprs \Big[- A_i  \w d\, \lpo(A_j\w A_k) \nn\\
&\qquad\qquad\qquad\qquad + A_i \w \lpo(\ddp A_j \w A_k + \moj A_j \w \ddp A_k) \label{ipjk4}\\
&\qquad\qquad\qquad\qquad + A_i \w (\dmj \w A_k + \moj A_j \w \dmk) \Big] ~. \nn
\end{align}
The second equality uses (\ref{relation2}) and the fact that $\om^{p+1}\lpo(A_j \w A_k)= A_j\w A_k$ for $j+k>n+p$.
We can re-express the first term of \eqref{ipjk4} as follows:
\begin{align}
- \pprs &d\left(A_i \w \lpo(A_j\w A_k)\right) + \pprs \left(dA_i \w \lpo(A_j\w A_k)\right) \nn\\
&= - \ddm \pprs (A_i\w \lpo(A_j \w A_k)) - \pprs d\, \lpo(A_i \w \opo\lpo(A_j\w A_k)) \nn\\
&\qquad + \pprs \left[(\ddp A_i \w \lpo(A_j \w A_k) + \dmi \w A_j\w A_k \right]  \label{ipjk5}
\end{align}
where we have used \eqref{Lei020} and $\om^{p+1}\lpo(A_j \w A_k)= A_j\w A_k$ for the second line.  Inserting \eqref{ipjk5} into \eqref{ipjk4} then gives \eqref{ipjk}.
\end{proof}
An analogous calculation gives the result for the other order of multiplication.
\begin{prop}\label{mthreeb} For $A_i, A_j, A_k \in \FO{p}{*}$ and $i + j + k \geq n+p$, 
\begin{align}\
(A_i \times A_j) \times A_k & = -\ddm \pprs \left[\lpo(A_i \w A_j)\w A_k \right] \nn\\
& \quad + \pprs\Big\{ \lpo(\ddp A_i \w A_j) \w A_k + (-1)^{i} \lpo(A_i \w \ddp A_j) \w A_k \nn\\
& \qquad\qquad\qquad + (-1)^{i+j} \lpo(A_i \w  A_j) \w \ddp A_k \Big\}\label{pijk}\\
& \quad + \pprs\Big\{-d \lpo (A_i \w A_j \w A_k) + \dmi \w A_j \w A_k \nn\\
& \qquad\qquad\qquad + \moi A_i \w \dmj \w A_k + \moij A_i \w A_j \w \dmk  \Big\} ~.\nn
\end{align}
\end{prop}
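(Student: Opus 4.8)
The plan is to prove Proposition~\ref{mthreeb} by exactly the computation used for Proposition~\ref{mthreea}, with the inner and outer factors of the triple product interchanged. As there, everything hinges on which of the two formulas \eqref{prod21} or \eqref{prod22} computes the \emph{inner} product, so I would split into the cases $i+j\le n+p$ and $i+j>n+p$. In the first case, $A_i\pp A_j=\pil{p}(A_i\w A_j)=(\bo-\opo\lpo)(A_i\w A_j)$ lies in $\FO{p}{i+j}$; since $i+j+k\ge n+p$, the outer product is given by \eqref{prod22}, and expanding $(A_i\pp A_j)\pp A_k$ produces six terms, the precise analogue of \eqref{ipjk1}. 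I would group them as in the proof of Proposition~\ref{mthreea}: the terms carrying a $\pips$ (equivalently, via \eqref{relation2}, a $\bo-\lpo\opo$) combine through Lemma~\ref{Leibniz02} into $-\ddm\pprs[\lpo(A_i\w A_j)\w A_k]+\pprs[\lpo(A_i\w A_j)\w\ddp A_k]$, exactly as \eqref{ipjk2} is obtained; the remaining terms, after substituting $\lpod A_i=\dmi$, $\lpod A_j=\dmj$ (by \eqref{Lei011}) and $dA_k=\ddp A_k+\opo\dmk$ and distributing $\lpo$ over the wedge products, yield the two remaining $\ddp$-terms of \eqref{pijk} together with the bottom block $\pprs\{-d\lpo(A_i\w A_j\w A_k)+\dmi\w A_j\w A_k+\moi A_i\w\dmj\w A_k+\moij A_i\w A_j\w\dmk\}$; the unwanted cross-terms $\pprs[\opo\,\dmi\w A_j\w A_k]$ and their partners cancel just as $\pprs[\opo\dmj\w\dmk]$ does in \eqref{lei22}.

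In the second case $i+j>n+p$, the inner product $A_i\pp A_j$ is given by \eqref{prod22} and lies in the \emph{barred} subspace $\BFO{p}{2n+2p+1-i-j}$, so $(A_i\pp A_j)\pp A_k$ is computed by \eqref{prod4}, namely $\rstar\bigl((\rstar(A_i\pp A_j))\w A_k\bigr)$ --- note the wedge is on the left and there is no $(-1)^i$, which is exactly the difference from the corresponding step of Proposition~\ref{mthreea}, where \eqref{prod3} is used instead. Applying $\rstar$ to $A_i\pp A_j$ gives $\pips[-d\lpo(A_i\w A_j)+\dmi\w A_j+\moi A_i\w\dmj]$; using \eqref{relation2} and the identity $\opo\lpo(A_i\w A_j)=A_i\w A_j$ valid for $i+j>n+p$ (from \eqref{relation1}), this reduces to the analogue of \eqref{ipjk4}, and then re-expressing the first term by \eqref{Lei020} exactly as in \eqref{ipjk5} produces \eqref{pijk}. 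Finally one checks that the two subcases agree at $i+j=n+p$, again as in Proposition~\ref{mthreea}.

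There is also a shorter route I would mention: since $\pp$ is graded commutative, $(A_i\pp A_j)\pp A_k=(-1)^{k(i+j)}\,A_k\pp(A_i\pp A_j)$, and Proposition~\ref{mthreea} applied with the indices cyclically permuted expresses the right-hand side; pushing the sign $(-1)^{k(i+j)}$ through each term --- using that $\lpo$ lowers form degree by $2(p+1)$, that $\dmk$ has degree $k-2p-1$, and that $\ddp,\ddm,\pprs$ are sign-neutral under reordering the factors --- converts it termwise into \eqref{pijk}. The main obstacle in either approach is purely the Koszul-sign bookkeeping together with keeping straight which partial terms cancel (the $\opo(\cdots)$ pieces, and the places where $\opo\lpo$ acts as the identity versus where it does not), and, in the direct approach, remembering that the inner product lands in a barred subspace when $i+j>n+p$ so that the outer multiplication is governed by \eqref{prod4} rather than \eqref{prod3}.
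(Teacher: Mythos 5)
Your proposal is correct and matches the paper's treatment: the paper gives no separate argument for this proposition, stating only that ``an analogous calculation gives the result for the other order of multiplication,'' and your mirrored case analysis (inner product via \eqref{prod21} versus \eqref{prod22}, outer product via \eqref{prod22} versus \eqref{prod4}, with Lemma \ref{Leibniz02} and \eqref{relation1}--\eqref{relation2} playing the same roles as in \eqref{ipjk2}--\eqref{ipjk5}) is exactly that analogous calculation. Your alternative route via graded commutativity and Proposition \ref{mthreea} with permuted indices is also sound and arguably cleaner, since it reduces everything to Koszul-sign bookkeeping on an already-proved identity.
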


With \eqref{ipjk} and \eqref{pijk}, we find that for $A_i, A_j, A_k \in \FO{p}{*}$ and $i + j + k \geq n+p$,
\begin{align}\label{aijk}
A_i &\times (A_j \times A_k) - (A_i \times A_j) \times A_k \nn\\
&= -\ddm \Big\{\pprs \big[A_i \w \lpo(A_j\w A_k) - \lpo(A_i \w A_j)\w A_k \big] \Big\} \nn\\
& \qquad \pprs \Big\{ \ddp A_i \w \lpo(A_j\w A_k) - \lpo(\ddp A_i \w A_j)\w A_k \\
& \qquad\qquad\quad + \moi \big[A_i \w \lpo(\ddp A_j\w A_k) - \lpo(A_i \w \ddp A_j)\w A_k\big]\nn\\
& \qquad\qquad\quad +\moij \big[A_i \w \lpo(A_j\w \ddp A_k) - \lpo(A_i \w A_j)\w \ddp A_k\big] \Big\}\nn ~.
\end{align}
This is precisely in the form \eqref{Leialg3} required for an $\Ain$-algebra with 
\begin{align}\label{m3def}
m_3(A_i, A_j, A_k) = \pprs \big[A_i \w \lpo(A_j\w A_k) - \lpo(A_i \w A_j)\w A_k \big]~.
\end{align}
Notice that if $i+j+k = n+p +1$, then the form inside the bracket of \eqref{m3def} has degree $i+j+k -2(p+1)= n-p-1 $.  But $\pprs$ acts on $\CA^{n-p-1}$ as the zero map.  Hence we find that
\begin{align}\label{m3range}
m_3(A_i, A_j, A_k) = 
\begin{cases} 
0  & \text{if } i+ j+k < n + p + 2, \\
\pprs \Big[A_i \w \lpo(A_j\w A_k)  &\\
\qquad\qquad - \lpo(A_i \w A_j)\w A_k\Big] &  \text{if } i+j+k \geq n+p +2
\end{cases}
\end{align}
for any $A_i, A_j, A_k \in \FO{p}{*}$.  

Note that with a $m_3$ map satisfying \eqref{Leialg3}, we have shown that the product is associative on $F^pH^*$.  Together with the differential satisfying the Leibniz rule, we conclude that $(F^pH^*, +, \times)$ is a ring.

\subsection{Triviality of higher order maps}

With $m_3$ found to be non-zero, we can use it to determine $m_k$ for $k>3$.  The condition \eqref{Leialg} for $k=4$ reads
\begin{align}\label{Leialg4}\begin{split}
m_2 (m_3 \otimes \bo)& + m_2 ( \bo \otimes m_3) - m_3 (m_2 \otimes \bo^{\otimes 2}) + m_3 ( \bo \otimes m_2 \otimes \bo) - m_3 ( \bo^{\otimes 2} \otimes m_2) \\
& = m_1 m_4 - m_4\left( m_1 \otimes \bo^{\otimes 3} + \bo \otimes m_1 \otimes \bo^{\otimes 2} + \bo^{\otimes 2} \otimes m_1 \otimes \bo + \bo^{\otimes 3} \otimes m_1\right) ~.
\end{split}\end{align}
We will show that the left-hand-side of \eqref{Leialg4} consisting of $m_2$ and $m_3$ is identically zero.  Therefore, $m_4$ can be taken to be zero.

From \eqref{m3range}, we know that $m_3$ is only non-trivial when all three elements it acts on has degree $\leq n+p$ and  the sum of the degrees is greater than $n+p+1$.   Thus we only have to consider forms $A_i, A_j, A_k, A_l \in \FO{p}{*}$ and we can write the left-hand-side of \eqref{Leialg4} as
\begin{align}\label{m4exp}
\text{LHS} &= m_3(A_i, A_j, A_k) \times A_l  + (-1)^i A_i \times m_3(A_j, A_k, A_l)\nn \\
&\quad - m_3(A_i \times A_j, A_k, A_l) + m_3(A_i, A_j \times A_k, A_l) - m_3(A_i, A_j, A_k \times A_l) ~.
\end{align}
Let us consider each term on the right-hand-side of \eqref{m4exp}.

For the first term, we find 
\begin{align}\label{m41}
\mm(\aa,\ab, \ac) \pp \ad &= \rstar \left\{\rstar \pprs \left(\aa\w \lpo(\ab \w \ac) - \lpo(\aa \w \ab) \w \ac\right) \w \ad \right\}\nn\\
& = \rstar \left( \aa \w \lpo(\ab \w \ac) \w \ad - \lpo(\aa \w \ab) \w \ac \w \ad \right) \nn\\
&  \quad -\rstar \Big[\lpo\opo \big(\aa \w \lpo(\ab \w \ac) \big) \w \ad \\
&\qquad\qquad - \lpo\opo \big(\lpo(\aa \w \ab)\w \ac\big)\w \ad \Big] ~. \nn
\end{align}
where we have used \eqref{relation2}.  Similarly, the second term gives
\begin{align}\label{m42}
\moi \aa \w \mm(\ab, \ac, \ad)&= \rstar\left(\aa \w \ab \w \lpo(\ac \w \ad) - \aa \w \lpo(\ab\w \ac) \w \ad \right) \nn\\
&\qquad +\rstar \Big[- \aa \w \lpo\opo(\ab \w \lpo(\ac \w \ad)) \\
& \qquad \qquad ~~+ \aa \w \lpo\opo\lpo(\ab\w \ac) \w \ad \Big] ~.\nn
\end{align}
For the third term, the only non-zero contribution comes from $i+j\leq n+p$.  Therefore, we have
\begin{align}\label{m43}
-\mm(\aa\pp\ab, \ac, \ad) & = -\mm\left(\pip(\aa \w \ab), \ac, \ad\right) \nn\\
& = \rstar\Big[-\aa \w \ab \w \lpo(\ac \w \ad) + \lpo(\aa\w\ab\w\ac) \w \ad \nn\\
& \qquad\quad +\opo \lpo(\aa\w\ab) \w \lpo(\ac\w\ad) \\
& \qquad\quad - \lpo\big(\opo\lpo(\aa\w\ab)\w \ac\big) \w \ad \Big] ~. \nn
\end{align}
By the same token, the fourth and fifth terms are
\begin{align}\label{m44}
\mm(\aa, \ab\pp\ac, \ad)& = \rstar\Big[\aa \w \lpo(\ab\w\ac\w\ad) - \lpo(\aa\w\ab\w\ac)\w\ad \nn\\
& \qquad\quad - \aa \w \lpo\big(\opo\lpo(\ab\w\ac)\w\ad\big) \\
& \qquad\quad + \lpo\big(\aa \w \opo\lpo(\ab\w\ac)\big)\w\ad\Big] ~, \nn
\end{align}

\begin{align}\label{m45}
-\mm(\aa, \ab, \ac\pp\ad)&=\rstar\Big[-\aa\w \lpo(\ab\w\ac\w\ad)+ \lpo(\aa\w\ab) \w \ac\w\ad\nn\\
&\qquad\quad + \aa \w \lpo\big(\ab\w\opo\lpo(\ac\w\ad)\big)\\
&\qquad\quad - \lpo(\aa\w\ab)\w \opo\lpo(\ac\w\ad) \Big] ~.\nn
\end{align}
Summing all the terms of \eqref{m41}--\eqref{m45} then gives zero.  This shows that $m_4$ can be chosen to be zero.  It is also straightforward to see that the higher maps $m_k$ for $k>4$ can also be chosen to be zero.  Thus, we have shown that there is an $\Ain$-algebra for the $p$-filtered forms.

\section{Ring structure of the symplectic four-manifold from fibered three-manifold}\label{example_product}

The purpose of this section is to present a pair of symplectic four-manifolds with the following properties:
\begin{itemize}
\item their de Rham cohomology rings are isomorphic and their primitive cohomologies have the same dimensions;
\item their primitive cohomology has different ring structure; in particular, the products in the component $ \PHPR{2}\otimes\PHPR{2} \to \PHDM{1} $ are different.
\end{itemize}

Both four-manifolds are topologically $S^1 \times Y_\tau$ where $Y_\tau=\Sigma \times_\tau S^1$ is a mapping torus of a closed surface $\Sigma$ identified with a monodromy $\tau$.  The construction of such symplectic four-manifolds were described in Section \ref{example_mapping_torus} and here we shall use the same notations and conventions as there. 

According to Proposition \ref{prop_mapping_torus1}, the dimensions of the de Rham cohomologies as well as the primitive cohomologies of such symplectic four-manifolds are determined by two natural numbers.  Consider the action of $\tau^*$ on $\HD{1}(\Sigma)$:
\begin{enumerate}
\item the first number is the dimension of the $\tau^*$-invariant subspace, which we denote by $q+p$;
\item the intersection pairing is not always non-degenerate on the $\tau^*$-invariant subspace; the second number is the dimension of this kernel, which is $q-p$.
\end{enumerate}
But in order to calculate the product structure of the primitive cohomologies of the four-manifolds, we need to understand well their basis elements.  So to begin, we shall first give a systematic construction of the basis elements of the de Rham cohomology of the three-manifold, $Y_\tau$ which then will lead us to the basis elements of the primitive cohomologies of $X=S^1 \times Y_\tau$ and their products.

\subsection{Representatives of de Rham cohomologies of the fibered three-manifold}

By symplectic linear algebra, we may choose a basis for the $\tau^*$-invariant subspace of $\HD{1}(\Sigma)$:
\begin{align*}
\big\{ [\alpha_1], [\alpha_2], \ldots, [\alpha_p], [\alpha_{p+1}], \ldots, [\alpha_q],\, [\beta_1], [\beta_2], \ldots, [\beta_p] \big\}
\end{align*}
where $p \leq q$ and such that $[\alpha_j]\cdot[\beta_j]=1=-[\beta_j]\cdot[\alpha_j]$ for $1\leq j\leq p$ and other pairings vanish.  Here, the pairing $[\gamma]\cdot[\gamma']$ is defined to be $\int_\Sigma(\gamma\wedge\gamma')/\int_\Sigma\omega_\Sigma$ for any $[\gamma],[\gamma']\in\HD{1}(\Sigma)$.  Again, by symplectic linear algebra, we can extend this basis to a symplectic basis for $\HD{1}(\Sigma)$:
\begin{align*}
&\big\{ [\alpha_1], \ldots, [\alpha_q],\, [\beta_1], \ldots, [\beta_p] \big\}  \,\cup\,  \big\{ [\alpha_{q+1}], , \ldots, [\alpha_g],\, [\beta_{p+1}], \ldots, [\beta_{q}], [\beta_{q+1}], \ldots, [\beta_g] \big\}
\end{align*}
where $g$ is the genus of the surface $\Sigma$.  A linear algebra argument shows that the image of $\tau^*-\bo$ is always perpendicular to $\{[\alpha_k]\}_{k=1}^p$ and $\{[\beta_k]\}_{k=1}^q$ with respect to the above symplectic pairing.  It follows that the image of $\tau^*-\bo$ is spanned by
\begin{align*}
\big\{ [\alpha_{p+1}], \ldots, [\alpha_q], [\alpha_{q+1}], \ldots, [\alpha_g],\, [\beta_{q+1}], \ldots, [\beta_g] \big\} ~.
\end{align*}
Thus, for example, there exists a $[\zeta_k]$ such that
\begin{align*}
\tau^*[\zeta_k] - [\zeta_k] = [\alpha_k]  \quad\text{ for any } p<k\leq g ~.
\end{align*}

Now the de Rham cohomology of $Y_\tau$ is determined by the Wang exact sequence in (\ref{wang_exact}) which involves the map $\tau^* -\bo$ on $\HD{1}(\Sigma)$.   Hence, we can explicitly construct the basis elements of $\HD{1}(Y_\tau)$ and $\HD{2}(Y_\tau)$ in terms of $\ker(\tau^*-\bo)$ and $\cok(\tau^*-\bo)$ as follows.

To start, consider the Jordan form of $\tau^*$.  When $1\leq k\leq q$, the element $[\alpha_k]$ must be the upper-left-most element of some Jordan block of eigenvalue $1$.  The size of the Jordan block is $1$ if and only if $1\leq k\leq p$.  That is to say, the interesting case is when $k\in\{p+1,\ldots,q\}$.  Let $[\gamma_{k,0}], [\gamma_{k,1}], \ldots, [\gamma_{k,\ell_k}]$ be the canonical basis for the block, where $\gamma_{k,0} = \alpha_k$, $\gamma_{k,1} = \zeta_k$ and with $\ell_{k}+1$ being the size of the Jordan block.  The discussion in the immediate following will be within a single Jordan block, and so for notational simplicity, we shall suppress the first subscript of $\gamma$ and write $\gamma_j$ for $\gamma_{k,j}$.  With this understood, there exist functions $\{g_j\}_{j=0}^\ell$ on $\Sigma$ such that
\begin{align*}
\tau^*(\gamma_0) &= \gamma_0 + d g_0~, \\
\tau^*(\gamma_1) &= \gamma_1 + \gamma_0 + d g_1 ~, \\
&~\,\vdots \\
\tau^*(\gamma_{\ell}) &= \gamma_{\ell} + \gamma_{\ell-1} + d g_{\ell} ~.
\end{align*}
We remark that the terminal element $[\gamma_\ell]$ does not belong to the image of $\tau^*-\bo$.  From the canonical basis, we can construct the following globally-defined differential forms on $Y_\tau$:
\begin{align*}
\gamma_0 &\leadsto \tilde\gamma_0=\gamma_0 + d(\chi\, g_0) ~, \\
\gamma_1 &\leadsto \tilde\gamma_1=\gamma_1 + \phi\,\gamma_0 +  d(\chi\, g_1) + (\phi-1)d(\chi\, g_0) ~, \\
\gamma_2 &\leadsto \tilde\gamma_2=\gamma_2 + \phi\,\gamma_1 + \frac{\phi^2-\phi}{2}\,\gamma_0 + d(\chi\, g_2) + (\phi-1)d(\chi\, g_1) + \frac{\phi^2-3\phi+2}{2}d(\chi\,g_0) ~,\\
&~\,\vdots
\end{align*}
where $\chi(\phi)$ is an interpolating function defined on the interval $\phi\in(-0.1,1.1)$ and is equal to $0$ on $(-0.1,0.1)$ and $1$ on $(0.9,1.1)$.  Here, we are covering the interval $[0,1]$ using three charts: $(-0.1, 0.1), (0,1)$, and $(0.9, 1.1)$.  The above $\tilde\gamma$'s are invariant under the identification $(x,\phi)\to(\tau(x),\phi-1)$, and thus are well-defined one-forms on $Y_\tau$.  In general, for $j\in\{0, \ldots, \ell\}$ where $\ell+1$ is the size of the Jordan block, we have 
\begin{align*}
\gamma_j &\leadsto \tilde\gamma_j = \sum_{i=0}^j \big(f_i(\phi)\gamma_{j-i} + f_i(\phi-1)\,d(\chi(\phi)\, g_{j-i})\big)  ~,
\end{align*}
where
$$f_i(\phi) = \dfrac{1}{i!}\prod^{i-1}_{m=0} \, (\phi-m) = \dfrac{1}{i!} \phi(\phi-1)\ldots(\phi-i+1) $$
and $f_0(\phi)\equiv1$.  The functions $f_i(\phi)$ have the following properties:
\begin{itemize}
\item $f_i(\phi)$ is a polynomial in $\phi$ of degree $i$;
\item $f_i(0) = 0$ for any $i>0$;  namely, $f_i(\phi)$ does not have any constant terms;
\item $f_i(\phi) - f_i(\phi-1) = f_{i-1}(\phi-1)$;
\item $f_i'(\phi) = \sum_{m=1}^i (-1)^{m+1}f_{i-m}(\phi)/m$.
\end{itemize}
Taking the exterior derivative of $\tilde\gamma_j$, we find
\begin{align}\label{eqn_d_gamma}
d\tilde{\gamma}_{j} &= d\phi\wedge\big(\sum_{i=1}^j\frac{(-1)^{i+1}}{i}\tilde{\gamma}_{j-i}\big)
\end{align}
for any $j\in\{1,\ldots,\ell\}$.  Notice that the sum above starts from $i=1$ and hence there are no terms of the form $d\phi \wedge \tilde{\gamma}_{\ell}$ on the right hand side of \eqref{eqn_d_gamma}.  Therefore, each $\alpha_k$ for $k\in\{1,\ldots,q\}$ leads to an element in $\HD{2}(Y_\tau)$ defined by
\begin{align*}
d\phi\wedge \alpha_k^\sharp &\qquad\text{ where }\alpha_k^\sharp = \tilde\gamma_{k,\ell_k} \,, \\
&\qquad\text{ and }\ell_k+1\text{ is the size of the associated Jordan block} .
\end{align*}
Notice that for $1\leq k\leq p$, $\ell_k =0$, and therfore, $\alpha_k^\sharp =  \tilde\gamma_{k,0} = \tilde\alpha_k$.
With Proposition \ref{prop_mapping_torus} and the Wang exact sequence of (\ref{wang_exact}), this construction gives the following basis for the de Rham cohomologies of $Y_\tau$ from the canonical basis of $\tau^*$:
\begin{align*}
\HD{1}(Y_\tau) &=\BR^{q+p+1} = \spn\{ d\phi,\tilde{\alpha}_1,\ldots,\tilde{\alpha}_q,\tilde{\beta}_1,\ldots,\tilde{\beta}_p \} ~, \\
\HD{2}(Y_\tau) &=\BR^{q+p+1} = \spn\{ \omega_\Sigma, d\phi\wedge\tilde{\alpha}_1, \ldots, d\phi\wedge\tilde{\alpha}_p, d\phi\wedge\tilde{\beta}_1, \ldots, d\phi\wedge\tilde{\beta}_p, \\
&\qquad\qquad\qquad\qquad d\phi\wedge{\alpha}_{p+1}^\sharp, \ldots, d\phi\wedge{\alpha}_{q}^\sharp \} ~.
\end{align*}

\subsection{Representatives of the primitive cohomologies of the symplectic four-manifold}
We now consider the four-manifold $X = S^1\times Y_\tau$ with the symplectic form $\omega = dt\wedge d\phi + \omega_\Sigma$.  Its de Rham cohomologies are
\begin{align*}
\HD{1}(X) &= \BR^{q+p+2} =  \spn\{ dt, \HD{1}(Y_\tau) \} ~, \\
\HD{2}(X) &= \BR^{2q+2p+2} = \spn\{ dt\wedge\HD{1}(Y_\tau), \HD{2}(Y_\tau) \} ~, \\
\HD{3}(X) &= \BR^{q+p+2} = \spn\{ d\phi\wedge\omega_\Sigma, dt\wedge\HD{2}(Y_\tau) \} ~.
\end{align*}
Clearly, the Euler characteristic of $X$ is zero.  It is also straightforward to check that the signature of $X$ is also zero.

We will use the exact sequence of Theorem \ref{resolution} to construct a basis for the primitive cohomologies $\PHPR{2}(X)$ and $\PHPL{2}(X)$.  As noted in Theorem \ref{prop_mapping_torus1}, they are both isomorphic to $\BR^{3q+p+1}$.

By Corollary \ref{cor_dimension4}, $\PHPR{2}(X)$ has a component isomorphic to the $\cok(L:\HD{0}(X)\to\HD{2}(X))$, and $\PHPL{2}(X)$ has a component isomorphic to the $\ker(L:\HD{2}(X)\to\HD{4}(X))$.  It is not hard to see that both these components are  isomorphic to the following basis elements in $\HD{2}(X)$:
\begin{align*}
&dt\wedge(d\phi - \omega_\Sigma) ~,~ dt\wedge\tilde{\alpha}_1 ~,~ \ldots~,~ dt\wedge\tilde{\alpha}_q ~,~ dt\wedge\tilde{\beta}_1 ~,~ \ldots ~,~ dt\wedge\tilde{\beta}_p ~,~ \\
&d\phi\wedge\tilde{\alpha}_1 ~,~ \ldots ~,~ d\phi\wedge\tilde{\alpha}_p ~,~ d\phi\wedge\tilde{\beta}_1 ~,~ \ldots ~,~ d\phi\wedge\tilde{\beta}_p ~,~ \\
&d\phi\wedge{\alpha}_{p+1}^\sharp ~,~ \ldots ~,~ d\phi\wedge{\alpha}_{q}^\sharp ~.
\end{align*}
The elements corresponding to $dt\wedge\HD{1}(Y_\Sigma)$ may not be primitive in general and so do need to be modified.  For instance, suppose that $\tau^*\alpha = \alpha + dg$.  Then, $dt\wedge(\alpha + d(\chi\, g))=dt\wedge(\alpha + \chi dg + \chi' g \,d\phi) $ 
is not necessarily primitive.  Note that $g$ is unique up to a constant.  Thus, we may assume that
\begin{align}\label{example_normalize}
\int_\Sigma g\,\omega_\Sigma = 0 ~,~ \text{ and thus } ~ g\,\omega_\Sigma = d\mu ~.
\end{align}
for some $1$-form $\mu$ on $\Sigma$.  The standard computation on Lefschetz decomposition then shows that
\begin{align*}
dt\wedge(\alpha + d(\chi\, g)) - d(\chi'\mu) = dt\wedge(\alpha + \chi dg + \chi' g d\phi) - \chi'g\omega_\Sigma - \chi''d\phi\wedge\mu
\end{align*}
is a primitive $2$-form, and is cohomologous to 
$dt\wedge(\alpha + d(\chi\, g))$.

Now the other component in Corollary \ref{cor_dimension4} for $\PHPR{2}(X)$ is $\ker(L:\HD{1}(X)\to\HD{3}(X))$.  This kernel is spanned by $\{\tilde{\alpha}_{p+1},\ldots,\tilde{\alpha}_q\}$.  
The corresponding elements of $\PHPR{2}$ are those that when acted upon by $\dpm$ gives an element in the kernel.  
To explicitly construct them, we note that for any $k\in\{p+1,\ldots,q\}$, 
\begin{align}\label{dmalp}
\omega\wedge\tilde{\alpha}_k &= dt\wedge d\phi\wedge\tilde{\gamma}_{k,0} + \chi' g_{k,0}d\phi\wedge\omega_\Sigma \notag\\
&= -d\big( dt\wedge\tilde{\gamma}_{k,1} + \chi'd\phi\wedge\mu_{k,0} \big) \\
&= -d\big( dt\wedge\tilde{\gamma}_{k,1} + \chi'd\phi\wedge\mu_{k,0} - d(\chi'\mu_{k,1} + \chi'(\phi-1)\mu_{k,0}) \big) \notag
\end{align}
where $\mu_{k,0}$ and $\mu_{k,1}$ are defined by (\ref{example_normalize}).  The expression inside the exterior derivative in line 2 above does not generally represent a primitive $2$-form. Therefore, we added the exterior derivative of $\chi'\mu_{k,1} + \chi'(\phi-1)\mu_{k,0}$ in line 3 to ensure primitivity.  We thus obtain the following elements in $\PHPR{2}$ for $k\in\{p+1,\ldots,q\}$,
\begin{align*}
dt\wedge\tilde{\gamma}_{k,1} + \chi'd\phi\wedge\mu_{k,0} - d(\chi'\mu_{k,1} + \chi'(\phi-1)\mu_{k,0}) ~.
\end{align*}
The above computation \eqref{dmalp} shows that the $\dpm$ action on the above expression is equal to $\tilde{\gamma}_{k,0} = \tilde{\alpha}_k$.

For $\PHPL{2}(X)$, the other component in Corollary \ref{cor_dimension4} is $\cok(L:\HD{1}(X)\to\HD{3}(X))$.  This is spanned by $\{(dt\wedge d\phi + \omega_\Sigma)\wedge\alpha_{k}^\sharp\}_{k=p+1}^q$.  These basis elements are cohomologous to $dt\wedge d\phi\wedge\alpha_k^\sharp$ provided that all $g_i$ has zero integration against $\omega_\Sigma$.  The corresponding elements in $\PHPL{2}(X)$ are $\dpp\alpha^\sharp_k$ which by (\ref{eqn_d_gamma}) are explicitly given by
\begin{align*}
d\phi\wedge\big(\sum_{j=1}^{\ell_k}\frac{(-1)^{j+1}}{j}\tilde{\gamma}_{k,\ell_k-j}\big) ~,
\end{align*}
for $k\in\{p+1,\ldots,q\}$.

\subsection{Two examples and their product structures}
We shall present below two explicit constructions.  The two four-manifolds will have the same de Rham cohomology ring, as well as identical primitive cohomologies.  However, their primitive product structures will be shown to be different.  In particular, the image of the following pairing has different dimensions in these two constructions:
\begin{align} \label{ppbb}\begin{array}{ccl}
\PHPR{2}(X) \otimes \PHPR{2}(X)  &\to  &\PHDM{1}(X)\\
(B_2, B_2')  &\mapsto  & \rstar \left(- d\,L^{-1}(B_2\w B'_2) + \dpm B_2 \w B'_2 + B_2 \w \dpm B'_2 \right)  ~.
\end{array} \end{align}
This is a symmetric bilinear operator.  Note that the first term $- \rstar\, d \, L^{-1}(B_2 \w B'_2) = - d \,L^{-2} (B_2 \w B'_2)$ is not only $\dpm$-closed but also $\dpp$-exact.  We remark that on a compact, symplectic four-manifold, $\dpp B_0$ is always $\dpm$-exact for any function $B_0$.  (See Proposition 3.16 of \cite{TY2}).   Hence, the first term on the right hand side of \eqref{ppbb} does not have any contribution here.

\subsubsection{Kodaira--Thurston nilmanifold}
The first example is the Kodaira--Thurston nilmanifold, which we denote by $X_1$.  The primitive cohomologies are computed in \cite{TY1}.  It is constructed from a torus with a Dehn twist.  To be more precise, let $T^2 = \BR^2/\BZ^2$, and let $(a,b)$ be the coordinate for $\BR^2$.  The monodromy map $\tau$ is induced by $(a,b)\mapsto(a,a+b)$.  It follows that $\HD{1}(T^2)$ is spanned by $da$ and $db$, and $\tau^*(db) = da+db$, $\tau^*(da) = da$.  We take the area form on $T^2$ to be $da\wedge db$, and take the symplectic form on $X = S^1\times Y_\tau$ to be $\omega = dt\wedge d\phi + da\wedge db$.

The basis for the primitive cohomologies can be constructed from the recipe explained in the previous subsection.  Note that $\tilde{\gamma}_0 = d a$ and $\tilde{\gamma}_1 = db + \phi\,da$ are well-defined on $Y_\tau$.  With this understood, we have
\begin{align*}
\PHPR{2}(X_1) &= \{ dt\wedge d\phi - da\wedge db, dt\wedge\tilde{\gamma}_0, d\phi\wedge\tilde{\gamma}_1, dt\wedge\tilde{\gamma}_1\} ~, \\
\PHDM{1}(X_1) &= \{ dt, d\phi, \tilde{\gamma}_1 \} ~.
\end{align*}
The only generator of $\PHPR{2}(X_1)$ which is not $\dpm$-closed is $dt\wedge\tilde{\gamma}_1$, and $\dpm (dt\wedge\tilde{\gamma}_1) = -\tilde{\gamma}_0$.  It follows that
\begin{align}
(d\phi\wedge\tilde{\gamma}_1) \times (dt\wedge\tilde{\gamma}_1) = d\phi   \qquad\text{and}\qquad   (dt\wedge\tilde{\gamma}_1) \times (dt\wedge\tilde{\gamma}_1) = 2dt
\end{align}
are the only non-trivial pairings between the generators of $\PHPR{2}(X_1)$.

\begin{rmk}
Here is the correspondence between the convention here and that of \cite{TY1}:
\begin{align*}
e_1 &= dt ~,  &e_2 &= d\phi ~,  &e_3 &= \tilde{\gamma}_0 ~,  &e_4 &= \tilde{\gamma}_1 ~.
\end{align*}
\end{rmk}

\subsubsection{An example involving a genus two surface}
Let $\Sigma$ be a genus two surface.  Fix a symplectic basis for its $\HD{1}(\Sigma)$: $\{\alpha_1,\alpha_2,\beta_1,\beta_2\}$.  Moreover, we may assume that the basis is an integral basis for the singular cohomology of $\Sigma$.  Let $\omega_\Sigma$ be an area form of $\Sigma$.  Normalize it by
\begin{align*}
[\omega_\Sigma] = [\alpha_1\wedge \beta_1] = [\alpha_2\wedge \beta_2] ~.
\end{align*}
Let $\tau$ be a monodromy of $\Sigma$ whose action on $\HD{1}(\Sigma)$ are
\begin{align*}
\tau^* &= \left(\begin{array}{cccc}
1 & n & \ell & 1 \\
0 & 1 & 1 & 0 \\
0 & 0 & 1 & 0 \\
0 & m & m+n & 1
\end{array}\right)
\end{align*}
with respect to $\{a_1,b_1,a_2,b_2\}$, and $\ell,m,n$ are integers with $m+n\neq0$.  A direct computation shows that $\tau^*$ preserves the intersection pairing.  It then follows from the theory of mapping class groups (see for example \cite{FM}) that $\tau^*$ does arise from a monodromy.  Note that
$$S^{-1} \tau^* S = J~,$$ 
where 
\begin{align*}
S = \left(\begin{array}{cccc}
1 & 0 & 0 & 1 \\
0 & 0 & \frac{1}{m+n} & \frac{-1}{m+n} \\
0 & 0 & 0  & \frac{1}{m+n} \\
0 & 1 & \frac{-n}{m+n} & \frac{n-\ell}{m+n}
\end{array}\right)
\qquad\text{and}\qquad
J = \left(\begin{array}{cccc}
1 & 1 & 0 & 0 \\
0 & 1 & 1 & 0 \\
0 & 0 & 1 & 1 \\
0 & 0 & 0 & 1
\end{array}\right) ~.
\end{align*}
Hence, the basis for the Jordan form is as follows: 
\begin{align*}
\gamma_0 &= \alpha_1 ~,   &\gamma_1 &= \beta_2 ~, \\
\gamma_2 &= \frac{1}{m+n} \, \alpha_2 + \frac{-n}{m+n}\, \beta_2~,   &\gamma_3 &= \frac{1}{m+n}\, \beta_1 - \frac{1}{m+n}\, \alpha_2 + \frac{n-\ell}{m+n}\, \beta_2 ~.
\end{align*}
For concreteness, we will set $m=1$ and $n=\ell=0$.  The canonical basis then becomes
\begin{align*}
\gamma_0 &= \alpha_1 ~,   &\gamma_1 &= \beta_2 ~,   &\gamma_2 &= \alpha_2 ~,   &\gamma_3 &= \beta_1-a_2 ~.
\end{align*}
They give the following well-defined differential forms on $Y_\tau$:
\begin{align*}
\tilde{\gamma}_0 &= \alpha_1 +  d(\chi\,g_0) ~, \\
\tilde{\gamma}_1 &= \beta_2 + \phi\, \alpha_1 + d(\chi\,g_1) + (\phi-1)d(\chi\,g_0) ~, \\
\tilde{\gamma}_2 &= \alpha_2 + \phi\, \beta_2 + \frac{\phi^2-\phi}{2}\, \alpha_1 + d(\chi\,g_2) + (\phi-1)d(\chi\,g_1) + \frac{\phi^2-3\phi+2}{2}d(\chi\,g_0)~, \\
\tilde{\gamma}_3 &= (\beta_1-\alpha_2) + \phi\, \alpha_2 + \frac{\phi^2-\phi}{2}\, \beta_2 + \frac{\phi^3-3\phi^2+2\phi}{6}\, \alpha_1 \\
&\quad + d(\chi\,g_3) + (\phi-1)d(\chi\,g_2) + \frac{\phi^2-3\phi+2}{2}d(\chi\,g_1) + \frac{\phi^3-6\phi^2+11\phi-6}{6}d(\chi\,g_0) ~.
\end{align*}
We shall assume that $g_i\, \omega_\Sigma = d\mu_i$ for $i\in\{0,1,2,3\}$.
According to the discussion in the previous subsection,
\begin{align*}
\omega &= dt\wedge d\phi + \omega_\Sigma ~, \\
\PHPR{2}(X_2) &= \{ dt\wedge d\phi-\omega_\Sigma ,\, dt\wedge\tilde{\gamma}_0 ,\, d\phi\wedge\tilde{\gamma}_3,\\
&\qquad dt\wedge\tilde{\gamma}_1 + \chi'd\phi\wedge\mu_0 - d(\chi'\mu_1 + \chi'(\phi-1)\mu_0) \} ~, \\
\PHDM{1}(X_2) &= \{ dt, d\phi, \tilde{\gamma}_3 \} ~.
\end{align*}
Since $\PHDM{1}(X_2) \cong \HD{3}(X_2)$, its element can be captured by integrating against $\HD{1}(X_2) = \{d\phi, dt, \tilde{\gamma}_0\}$.  There is only one generator of $\PHPR{2}(X_2)$ that is not $d$-closed:
\begin{align*}
\dpm\big( dt\wedge\tilde{\gamma}_1 + \chi'd\phi\wedge\mu_0 - d(\chi'\mu_1 + \chi'(\phi-1)\mu_0) \big) &= -\tilde{\gamma}_0 ~.
\end{align*}

We now consider the pairings between the generators of $\PHPR{2}(X_2)$.  If one of them is $d$-closed, the only non-trivial pairing is
\begin{align*}
(d\phi\wedge\tilde{\gamma}_3) \times \big( dt\wedge\tilde{\gamma}_1 + \chi'd\phi\wedge\mu_0 - d(\chi'\mu_1 + \chi'(\phi-1)\mu_0) \big) &\cong d\phi\wedge\tilde{\gamma}_0\wedge\tilde{\gamma}_3 ~.
\end{align*}
The latter expression has nonzero integration against $dt$.  That is to say, the element in $\PHDM{1}(X_2)$ is proportional to $d\phi$.

It remains to examine the square of $dt\wedge\tilde{\gamma}_1 + \chi'd\phi\wedge\mu_0 - d(\chi'\mu_1 + \chi'(\phi-1)\mu_0)$.  As an element in $\HD{3}(X_2)$, it is
\begin{align*}
-2\big( dt\wedge\tilde{\gamma}_1 + \chi'd\phi\wedge\mu_0 - d(\chi'\mu_1 + \chi'(\phi-1)\mu_0) \big)\wedge \tilde{\gamma}_0 ~.
\end{align*}
The expression has zero integration against $\tilde{\gamma}_0$.  We compute its integration against $d\phi$.
\begin{align*}
2\int_{X_2} d\phi\wedge dt\wedge \tilde{\gamma}_0\wedge\tilde{\gamma}_1 ~,
\end{align*}
and it is not hard to see that $\int_{\Sigma_\phi}\tilde{\gamma}_0\wedge\tilde{\gamma}_1 = 0$ on each fiber $\Sigma_\phi$ of the fibration $Y_\tau\to S^1$.  This implies the square of $dt\wedge\tilde{\gamma}_1 + \chi'd\phi\wedge\mu_0 - d(\chi'\mu_1 + \chi'(\phi-1)\mu_0)$ can not be proportional to $dt$, though it can be proportional to $d\phi$.  We therefore can conclude that the image of $\PHPR{2}(X_2)\otimes\PHPR{2}(X_2)\to\PHDM{1}(X_2)$ is spanned only by $d\phi$.  We have thus shown that the images of $\PHPR{2}\otimes\PHPR{2}\to\PHDM{1}$ of the above two examples are of different dimensions.

\

\appendix
\section{Compatibility of filtered product with topological products}\label{apx_product}

We here provide the proof of Theorem \ref{compthrm} demonstrating the compatibility of the filtered product $\times$ as defined in Definition \ref{defprod} with the wedge and Massey product.  We begin first with a lemma which will be useful in the proof.

\begin{lem}
For any $A_k\in\CA^k$,
\begin{align}\label{wm05}
\rstar d L^{-(p+1)}A_k - \rstar L^{-(p+1)}dA_k &= \pil{p}\rstar d L^{-(p+1)}A_k - \rstar L^{-(p+1)}d(\pil{p}A_k) ~.
\end{align}
Note that the second term of the right hand side vanishes when $k>n+p$, and the first term vanishes when $k\leq n+p$.
\end{lem}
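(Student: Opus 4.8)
The plan is to decompose an arbitrary $A_k\in\CA^k$ into its $p$-filtered part and its complement, and track how $d$, $L^{-(p+1)}$, and $\rstar$ act on each piece. Write $A_k = \pil{p}A_k + L^{p+1}L^{-(p+1)}A_k$ using \eqref{relation1}, and set $C_k := L^{p+1}L^{-(p+1)}A_k$, so $\pil{p}C_k = 0$. The quantity to control is
\begin{align*}
\mathcal{E}(A_k) := \rstar d L^{-(p+1)}A_k - \rstar L^{-(p+1)}dA_k - \pil{p}\rstar d L^{-(p+1)}A_k + \rstar L^{-(p+1)}d(\pil{p}A_k)~.
\end{align*}
Since every term is linear in $A_k$, and since $L^{-(p+1)}\pil{p} = 0$ (the projection $\pil{p}$ keeps only powers of $\om$ at most $p$, which $L^{-(p+1)}$ annihilates by \eqref{LLdec1}), several terms collapse: $\rstar d L^{-(p+1)}A_k = \rstar d L^{-(p+1)}C_k$ and $\rstar L^{-(p+1)}dA_k$ splits as $\rstar L^{-(p+1)}d(\pil{p}A_k) + \rstar L^{-(p+1)}d C_k$. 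So $\mathcal{E}(A_k)$ reduces to $(\bo - \pil{p})\rstar d L^{-(p+1)}C_k - \rstar L^{-(p+1)}dC_k$, and the claimed identity becomes the assertion that this vanishes.

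The next step is the key structural observation: $L^{-(p+1)}dC_k = L^{-(p+1)}dA_k$ has Lefschetz degree exactly $k - 2(p+1) + 1 = k - 2p - 1$, and one checks from \eqref{rsL}, i.e. $L^{-(p+1)} = \rstar L^{p+1}\rstar$, together with \eqref{relation2} that $\rstar d L^{-(p+1)}C_k$ lands in the image of $L^{p+1}$; hence $(\bo - \pil{p})$ and $\pil{p}$ cleanly separate $\rstar d L^{-(p+1)}C_k$ according to whether we are above or below degree $n+p$. Concretely, $C_k = L^{p+1}B$ for $B = L^{-(p+1)}A_k \in \CA^{k-2p-2}$, so $dC_k = L^{p+1}(\dpp + L\dpm)B = L^{p+1}\dpp B + L^{p+2}\dpm B$, giving $L^{-(p+1)}dC_k = \dpp B + L\dpm B = dB$. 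Meanwhile $\rstar d L^{-(p+1)}C_k = \rstar\, dB$; applying $\rstar$ and the relation $\rstar A_m = L^{n-m}A_m$ from \eqref{rsak} turns the comparison into a purely bookkeeping statement about which Lefschetz components of $\rstar\, dB$ survive $\pil{p}$ versus lie in $\im L^{p+1}$. The identity $(\bo - \pil{p})\rstar dB = \rstar\, dB - \pil{p}\rstar dB$ and the fact that $\rstar L^{-(p+1)}dC_k = \rstar\, dB$ reduces everything to showing $\pil{p}\rstar dB = 0$, equivalently $L^{p+1}\rstar(\rstar dB) = L^{p+1}\, dB$ has no component of order $\le n+p$ in the relevant degree — which follows because $dB$ already carries a factor forcing its $\rstar$-image into $\im L^{p+1}$ in the pertinent degree range.

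The main obstacle I anticipate is the careful degree bookkeeping at the boundary case $k = n+p$ or $k = n+p+1$, where ``above'' and ``below'' the middle of the pyramid meet and the two remarked vanishing statements are not simultaneously vacuous; one has to verify that the single surviving term on each side matches exactly, with no sign or combinatorial factor discrepancy coming from the definition \eqref{rsrel} of $\rstar$ versus the negative-power convention \eqref{LLdec1}. I would handle this by treating the three ranges $k \le n+p$, $k = n+p+1$ is subsumed in $k > n+p$, and $k > n+p$ separately and observing that in the first range $L^{-(p+1)}d(\pil{p}A_k)$ need not vanish but $(\bo-\pil{p})\rstar d L^{-(p+1)}A_k$ does, while in the second the roles reverse, with Lemma \ref{Leibniz02} available as a cross-check on the interplay of $\pil{p}$, $\rstar$, and $d$.
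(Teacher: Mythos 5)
Your reduction is sound as far as it goes: decomposing $A_k=\pil{p}A_k+C_k$ with $C_k=L^{p+1}L^{-(p+1)}A_k$ and using $L^{-(p+1)}\pil{p}=0$ does reduce \eqref{wm05} to the identity $(\bo-\pil{p})\rstar d L^{-(p+1)}C_k=\rstar L^{-(p+1)}dC_k$, and your argument for it closes in the range $k\leq n+p$, where it essentially reproduces the paper's case (i). The gap is in the range $k>n+p$, which is the nontrivial half of the lemma, and two of your intermediate claims fail there. First, writing $B=L^{-(p+1)}A_k\in\CA^{k-2p-2}$, the step $L^{-(p+1)}dC_k=L^{-(p+1)}L^{p+1}dB=dB$ requires that $L^{p+1}$ annihilate no Lefschetz component of $dB$; a component $L^rB_s$ of $dB$ survives multiplication by $\om^{p+1}$ only when $r\geq k-n-p$, and for $k>n+p$ the bottom component (with $r=k-n-p-1$) is killed. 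Concretely, if $A_k=\om^{k-n}\w B_{2n-k}+\om^{k-n+1}\w A'_{2n-k-2}$, then $dB$ contains $\om^{k-n-p-1}\w\dpp B_{2n-k}$, which $L^{p+1}$ annihilates because $L^{k-n}\dpp B_{2n-k}=0$; hence $L^{-(p+1)}dC_k=dB-\om^{k-n-p-1}\w\dpp B_{2n-k}\neq dB$. Second, the statement you reduce everything to, $\pil{p}\rstar dB=0$, is exactly the first term on the right-hand side of \eqref{wm05}, which the lemma itself flags as nonvanishing precisely when $k>n+p$: in the example above $\pil{p}\rstar dB=\om^p\w\dpp B_{2n-k}$. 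Your closing assertion that $dB$ ``carries a factor forcing its $\rstar$-image into $\im L^{p+1}$'' is false here, since $\rstar dB$ has degree $2n-k+2p+1\leq n+p$ once $k>n+p$.

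The two errors happen to compensate: the component of $dB$ lost under $L^{-(p+1)}L^{p+1}$ is exactly the one whose $\rstar$-image is $\pil{p}\rstar dB$, so $(\bo-\pil{p})\rstar dB-\rstar L^{-(p+1)}dC_k$ does vanish. But your write-up does not exhibit this cancellation; it asserts that each of the two quantities is zero separately, and for $k>n+p$ neither is. To repair the argument you must either track the annihilated component of $dB$ explicitly and match it against $\pil{p}\rstar dB$, or do what the paper does for $k>n+p$, namely write $A_k=\om^{k-n}\w B_{2n-k}+\om^{k-n+1}\w A'_{2n-k-2}$ and compute both sides of \eqref{wm05} directly in Lefschetz components. (A further slip in your last paragraph: for $k\leq n+p$ it is $\pil{p}\rstar dL^{-(p+1)}A_k$ that vanishes, not $(\bo-\pil{p})\rstar dL^{-(p+1)}A_k$, since $\rstar dL^{-(p+1)}A_k$ then has degree at least $n+p+1$.)
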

\begin{proof}
Case (i): when $k\leq n+p$.  We invoke \eqref{relation1} to write $A_k$ as
\begin{align*}
A_k &= \pil{p}A_k + \om^{p+1}\wedge(L^{-(p+1)}A_k) ~.
\end{align*}
After taking $L^{-(p+1)}\circ d$, it becomes
\begin{align*}
L^{-(p+1)}d A_k &= L^{-(p+1)}d (\pil{p} A_k) + L^{-(p+1)}\big[\om^{p+1}\w d(L^{-(p+1)}A_k) \big] ~.
\end{align*}
Since $k\leq n+p$, the last term is equal to $d(L^{-(p+1)}A_k)$.  This finishes the proof for this case.

Case (ii): when $k> n+p$.  We write $A_k$ as
\begin{align*}
A_k = \om^{k-n}\wedge B_{2n-k} + \om^{k-n+1}\wedge A'_{2n-k-2}
\end{align*}
where $B_{2n-k}\in\CB^{n-k}$ and $A'_{2n-k-2}\in\CA^{2n-k-2}$.  A straightforward computation shows that
\begin{align*}
d L^{-(p+1)}A_k &= \om^{k-n-p-1}\wedge (\dpp B_{2n-k}) + \om^{k-n-p}\wedge(\dpm B_{2n-k} + dA'_{2n-k-2}) \quad\text{in }\CA^{k-2p-1} ~, \\
\Rightarrow \quad  \rstar d L^{-(p+1)}A_k &= \om^{p}\wedge (\dpp B_{2n-k}) + \om^{p+1}\wedge(\dpm B_{2n-k} + dA'_{2n-k-2}) \quad\text{in }\CA^{2n-k+2p+1} ~.
\end{align*}
Hence, $\om^{p}\wedge (\dpp B_{2n-k})$ is $\pil{p}\rstar d L^{-(p+1)}A_k$.  Meanwhile, it is not hard to see that $\om^{k-n-p}\wedge(\dpm B_{2n-k} + dA'_{2n-k-2})$ is $L^{-(p+1)}dA_k$.  This finishes the proof of the lemma.
\end{proof}

We now give the proof of Theorem \ref{compthrm}.  We shall consider the four different cases separately.

\

\noindent{\it{(1) $\FHP{p}{j}\times\FHP{p}{k}\to\FHP{p}{j+k}$, $j+k\leq n+p$}}

\begin{lem}\label{lem_wm01}
For $j\leq n+p$, $k\leq n+p$ and $j+k\leq n+p$, the product $\FHP{p}{j}\times\FHP{p}{k}\to\FHP{p}{j+k}$ induced by (\ref{prod21}) is compatible with the topological products.
\end{lem}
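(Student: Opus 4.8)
The plan is to read off both compatibility statements of Theorem~\ref{compthrm} directly from the formula $A_j\times A_k=\pil{p}(A_j\w A_k)$, which is what \eqref{prod21} gives in the range $j+k\le n+p$. The only algebraic inputs needed are the projection identity $\bo=\pip+L^{p+1}\lpo$ of \eqref{relation1}, the dual relation $\lpo L^{p+1}=\bo$, the fact that $\pip$ kills the image of $L^{p+1}$ (every Lefschetz component of $L^{p+1}Z$ has order $\ge p+1$), and that $L$ commutes with $\w$ and with $d$.

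\emph{Wedge product.} Given closed $\xi_j\in\CA^j$, $\xi_k\in\CA^k$, I would substitute $\xi_j=\pip\xi_j+L^{p+1}\lpo\xi_j$ and likewise for $\xi_k$, expand $\xi_j\w\xi_k$, and note that every term except $\pip\xi_j\w\pip\xi_k$ contains a factor $L^{p+1}$ and so is annihilated by $\pip$. Hence $\pil{p}(\xi_j\w\xi_k)=\pil{p}(\pip\xi_j\w\pip\xi_k)=f(\xi_j)\times f(\xi_k)$ as forms, which is in fact stronger than the asserted equality of classes in $\FHP{p}{j+k}$. Well-definedness of $f$ on de~Rham cohomology, i.e.\ $\ddp\pip\alpha=\pil{p}d\alpha$ for $d$-closed $\alpha$, comes from the same cancellation, since $\ddp\pip\alpha=\pip d\pip\alpha=\pip d\alpha-\pip L^{p+1}d\lpo\alpha=0$.

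\emph{Massey product.} Take $\ddp$-closed representatives $A_j,A_k\in\FO{p}{*}$ with top Lefschetz components $L^pB_{j-2p}$, $L^pB_{k-2p}$. The crucial point is that $\ddp$-closedness forces $dA_j=L^{p+1}\dpm B_{j-2p}=\opo\w g(A_j)$, where $g(A_j)=\lpod A_j=\dpm B_{j-2p}$, and that $d\,g(A_j)=0$: indeed $L^{p+1}d\dpm B_{j-2p}=d^2A_j=0$, and $L^{p+1}$ is injective on $\CB^{j-2p-1}$ precisely because $j\le n+p$. Thus $\eta_j:=A_j$ and $\eta_k:=A_k$ are admissible choices in the construction \eqref{wm02}, which gives $\Massey{g(A_j)}{g(A_k)}=g(A_j)\w A_k+\moj A_j\w g(A_k)$. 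On the other hand, using $\pil{p}(A_j\w A_k)=A_j\w A_k-L^{p+1}\lpo(A_j\w A_k)$ together with $\lpo L^{p+1}=\bo$ and $dA_j=L^{p+1}g(A_j)$, $dA_k=L^{p+1}g(A_k)$, one computes
\begin{align*}
g(A_j\times A_k)&=\lpo d\,\pil{p}(A_j\w A_k)
=\lpo d(A_j\w A_k)-d\,\lpo(A_j\w A_k)\\
&=\lpo L^{p+1}\bigl(g(A_j)\w A_k+\moj A_j\w g(A_k)\bigr)-d\,\lpo(A_j\w A_k)\\
&=\Massey{g(A_j)}{g(A_k)}-d\,\lpo(A_j\w A_k)~.
\end{align*}
The correction term is exact, so it vanishes in $\HD{*}(M)$ and a fortiori in $\HD{*}(M)/\CI(g(A_j),g(A_k))$, which is exactly the asserted equality. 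That $g(A_j\times A_k)$ is itself $d$-closed follows from $\ddp$-closedness of $A_j\times A_k$ (Theorem~\ref{LeiRule}(i)) and the same injectivity of $L^{p+1}$ used above.

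I do not expect a genuine obstacle in this case: $j+k\le n+p$ is exactly the regime in which the filtered product \emph{is} a projected wedge product, so everything reduces to bookkeeping with $\bo=\pip+L^{p+1}\lpo$ and its $\rstar$-dual. The two places requiring mild care are the step $d\,g(A_j)=0$, which is where the hypothesis $j\le n+p$ is genuinely used (via injectivity of $L^{p+1}$ on primitive $(j-2p-1)$-forms), and keeping the Koszul signs in \eqref{wm02} straight. The substantive difficulties are deferred to the complementary lemmas for $j+k>n+p$ and for products involving $\BFO{p}{*}$, where the derivative terms in Definition~\ref{defprod} contribute nontrivially.
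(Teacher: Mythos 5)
Your proof is correct and follows essentially the same route as the paper's: wedge-product compatibility comes from the projection identity $\bo=\pip+L^{p+1}\lpo$ (the paper states $\pil{p}(\xi_j\w\xi_k)=\pil{p}((\pil{p}\xi_j)\w(\pil{p}\xi_k))$ without the expansion you supply), and Massey-product compatibility comes from computing $g(A_j\times A_k)=\lpo d\,\pil{p}(A_j\w A_k)$ and finding it equals $\Massey{g(A_j)}{g(A_k)}$ up to the exact term $d\,\lpo(A_j\w A_k)$, exactly as in the paper (where the degree bound $j+k\le n+p$ likewise enters through $\lpo L^{p+1}=\bo$, equivalently injectivity of $L^{p+1}$ on $\CA^{j+k-2p-1}$). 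The only quibble is in your supererogatory check that $dg(A_j)=0$: the form $dg(A_j)=\dpp\dpm B_{j-2p}$ is primitive of degree $j-2p$, not $j-2p-1$, so the injectivity you need is of $L^{p+1}$ on $\CB^{j-2p}$ (valid for $j\le n+p-1$; at $j=n+p$ one instead uses $\dpp\dpm$-closedness of the representative) --- but this closedness is already built into the exact-sequence machinery and is not invoked in the paper's proof, so it does not affect the argument.
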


\begin{proof}
(Wedge product)~  Given $[\xi_j]\in\HD{j}$ and $[\xi_k]\in\HD{k}$, it is not hard to see that
\begin{align*}
\pil{p}(\xi_j\w\xi_k) &= \pil{p}\big( (\pil{p}\xi_j)\w(\pil{p}\xi_k) \big) = (\pil{p}\xi_j)\times(\pil{p}\xi_k) ~.
\end{align*}

\noindent(Massey product)~  Consider two elements $[A_j]\in\FHP{p}{j}$ and $[A_k]\in\FHP{p}{k}$.  Since $A_j$ and $A_k$ are $\ddp$-closed, $g(A_j) = L^{-(p+1)}dA_j$ and $g(A_k)= L^{-(p+1)}dA_k$.  Moreover,
\begin{align}\label{wm04}
dA_j = \om^{p+1}\w g(A_j)  \qquad\text{and}\qquad
d A_k = \om^{p+1}\w g(A_k) ~.
\end{align}
With (\ref{wm04}), the Massey product (\ref{wm02}) is
\begin{align}\label{wm11}
\Massey{g(A_j)}{g(A_k)} &= (L^{-(p+1)}dA_j)\w A_k + (-1)^jA_j\w(L^{-(p+1)}dA_k) ~.
\end{align}

We now calculate $g(A_j\times A_k)$.  According to Theorem \ref{LeiRule}, $A_j\times A_k$ is $\ddp$-closed.  It follows that $g(A_j\times A_k) = L^{-(p+1)}d(A_j\times A_k)$.  With (\ref{wm04}),
\begin{align*}
d(A_j\times A_k)&= d(A_j\w A_k - \omega^{p+1}\w L^{-(p+1)}(A_j\times A_k)) \\
&= \omega^{p+1}\w\big( (L^{-(p+1)}dA_j)\w A_k + (-1)^jA_j\w(L^{-(p+1)}dA_k)  + d L^{-(p+1)}(A_j\w A_k) \big) ~.
\end{align*}
Since $j+k\leq n+p$, $L^{p+1}$ is injective on $\CA^{j+k-2p-1}$.  Thus,
\begin{align*}
g(A_j\times A_k) &= \Massey{g(A_j)}{g(A_k)} + d(L^{-(p+1)}(A_j\w A_k)) ~.
\end{align*}
This completes the proof of the lemma.
\end{proof}

\

\noindent{\it{(2) $\FHP{p}{j}\times\FHP{p}{k}\to\FHM{p}{2n+2p+1-j-k}$, $j+k> n+p$}}

\begin{lem}\label{lem_wm02}
For $j\leq n+p$, $k\leq n+p$ and $j+k> n+p$, the product $\FHP{p}{j}\times\FHP{p}{k}\to\FHM{p}{2n+2p+1-j-k}$ induced by (\ref{prod22}) is compatible with the topological products.
\end{lem}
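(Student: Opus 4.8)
The plan is to mirror the structure of Lemma \ref{lem_wm01}, treating the wedge and Massey compatibilities separately, but now tracking carefully where the differential operators $\ddm$ replace $\ddp$ because we have crossed the middle of the elliptic complex. First I would recall that for $j+k>n+p$ we are in the regime of formula \eqref{prod22}, so that $A_j\times A_k = \pil{p}\rstar\big[-d\,L^{-(p+1)}(A_j\w A_k) + (L^{-(p+1)}dA_j)\w A_k + (-1)^j A_j\w(L^{-(p+1)}dA_k)\big]$, an element of $\BFO{p}{2n+2p+1-j-k}$; by Theorem \ref{LeiRule}(iii) this product is $\ddm$-closed when $A_j,A_k$ are $\ddp$-closed, so it represents a class in $\FHM{p}{2n+2p+1-j-k}$. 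The relevant map $g$ in \eqref{wm03} is now the reflection $\rstar$ itself, while the relevant $f$ going into the degree $j,k$ slots is $\pil{p}$. The key point is that the diagram \eqref{wmd} now wraps around the bottom of the complex, so the ``lower'' map in the diagram that we must match is again $L^{p+1}$ applied to de Rham cohomology, and the target degrees work out so that the required identity is $\Massey{g(A_j)}{g(A_k)} = g(A_j\times A_k)$ modulo the ideal $\CI$.

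The main steps I would carry out are: (i) express $g(A_j) = \rstar(A_j\times\cdots)$ — more precisely, since $j,k\leq n+p$ and the images of $f$ are in $\FHP{p}{}$, the map $g$ evaluated here is $g = \pil{p}\rstar d\,L^{-(p+1)}$ composed appropriately, and for $\ddp$-closed $A_j$ one has $dA_j = \om^{p+1}\w g(A_j)$ exactly as in \eqref{wm04}; (ii) substitute $dA_j,dA_k$ into $d(A_j\times A_k)$ using the Leibniz-type computation, exactly paralleling the last display of the proof of Lemma \ref{lem_wm01}, but now keeping the $-d\,L^{-(p+1)}(A_j\w A_k)$ term since it is nontrivial for $j+k>n+p$; (iii) apply $\rstar$ and the preliminary Lemma on $\rstar d L^{-(p+1)}$ versus $\rstar L^{-(p+1)}d$ (the unnumbered lemma just proved, equation \eqref{wm05}) to rewrite $g(A_j\times A_k) = \pil{p}\rstar d\,L^{-(p+1)}(A_j\times A_k)$ in terms of $\xi_{j-2p-1}:=g(A_j)$ and $\xi_{k-2p-1}:=g(A_k)$; (iv) identify the resulting expression with the Massey product $\xi_{j-2p-1}\w\eta_k + (-1)^j\eta_j\w\xi_{k-2p-1}$ where $\eta_j = A_j$ works as a bounding form because $dA_j = \om^{p+1}\w\xi_{j-2p-1}$ — here one has to check the degree bookkeeping, i.e. that $A_j$ genuinely plays the role of the $\eta$ in \eqref{wm02}; (v) absorb the leftover $\pil{p}\rstar d\,L^{-(p+1)}$ of the triple wedge $A_i\w A_j$ (the genuinely new term relative to case (1)) into the ambiguity ideal $\CI(g(A_j),g(A_k))$, since it is exact and lies in the ideal generated by the $\xi$'s.

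For the wedge-product half, given $[\xi_j]\in\HD{j}$ and $[\xi_k]\in\HD{k}$ with $j+k>n+p$, I would show $f(\xi_j\w\xi_k) = f(\xi_j)\times f(\xi_k)$ where now $f:\HD{j+k}\to\FHM{p}{2n+2p+1-j-k}$ is $\pil{p}\rstar d\,L^{-(p+1)}$. Because $\xi_j,\xi_k$ are $d$-closed, all three terms of \eqref{prod22} collapse: the derivative terms $(L^{-(p+1)}d\xi_j)\w\xi_k$ vanish, leaving $f(\xi_j)\times f(\xi_k) = \pil{p}\rstar\big[-d\,L^{-(p+1)}(\pil{p}\xi_j\w\pil{p}\xi_k)\big]$, which one must match with $\pil{p}\rstar d\,L^{-(p+1)}(\xi_j\w\xi_k)$. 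These agree up to the $\pil{p}$-complement pieces of $\xi_j,\xi_k$, whose contributions one checks die under the outer $\pil{p}\rstar$ using relation \eqref{relation2}, i.e. $\bo = \pips + L^{-(p+1)}L^{p+1}$ and the fact that $L^{p+1}$ of a form of the relevant degree lands where $\pips$ kills it; this is essentially the same cancellation used at the end of the proof of Proposition \ref{mthreea}.

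The hard part will be step (v) together with the sign and degree bookkeeping in step (iv): one must verify that the ``extra'' exact term $\pil{p}\rstar d\,L^{-(p+1)}(A_i\w A_j)$, which has no analogue in the $j+k\le n+p$ case, is not merely exact but actually lies in the ideal $\CI(g(A_j),g(A_k))\subset\HD{*}(M)/\CI$, so that the equality $\Massey{g(A_j)}{g(A_k)}=g(A_j\times A_k)$ holds on the quotient. Showing this requires re-expressing $L^{-(p+1)}(A_j\w A_k)$ as a wedge of $A_j$ (or $A_k$) with a lower-filtration form and then pushing the $d$ through; the signs $(-1)^j$ must be chased through $\rstar$, $\pil{p}$, and the Leibniz rule of Theorem \ref{LeiRule}. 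The remaining cases $\FHP{p}{}\times\FHM{p}{}$ and $\FHM{p}{}\times\FHM{p}{}$ (the latter trivially, by \eqref{prod5}) I would then dispatch analogously, using \eqref{prod3}–\eqref{prod4}, which are purely $\rstar$-conjugated wedge products and hence compatible with the topological products almost by inspection.
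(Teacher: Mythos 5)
Your overall architecture (splitting into wedge and Massey halves, using \eqref{wm04} for the bounding forms, and invoking the preliminary identity \eqref{wm05}) matches the paper's, but two of your steps contain genuine errors that would derail the argument. First, in the wedge-product half you assert that because $\xi_j,\xi_k$ are $d$-closed the derivative terms of \eqref{prod22} vanish. This is false: the product is evaluated on $f(\xi_j)=\pil{p}\xi_j=:A_j$ and $f(\xi_k)=\pil{p}\xi_k=:A_k$, which are \emph{not} $d$-closed. Writing $\xi_j = A_j + \om^{p+1}\w\eta_{j-2p-2}$ with $\eta_{j-2p-2}=\lpo\xi_j$, closedness of $\xi_j$ gives $dA_j = -\om^{p+1}\w d\eta_{j-2p-2}$, hence $\lpod A_j = -d\eta_{j-2p-2}$, which is generically nonzero. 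These terms are not a nuisance to be discarded: they are exactly what is needed to match $f(\xi_j\w\xi_k)$, which upon expanding $\xi_j\w\xi_k = A_j\w A_k + \om^{p+1}\w\zeta_{j+k-2p-2}$ and applying Lemma \ref{Leibniz02} produces precisely $-\pil{p}\rstar\big[(d\eta_{j-2p-2})\w A_k + (-1)^j A_j\w(d\eta_{k-2p-2}) - d\,\lpo(A_j\w A_k)\big]$ plus the $\ddm$-exact correction $\ddm(\pil{p}\rstar\zeta_{j+k-2p-2})$. Your plan of matching only the $-d\,\lpo(A_j\w A_k)$ terms and killing everything else via \eqref{relation2} would leave the two sides unequal.

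Second, in the Massey half your step (iii) applies the wrong connecting map: since $A_j\times A_k$ lies in $\BFO{p}{2n+2p+1-j-k}$, the relevant map is $g=\rstar$ (as you correctly state in your opening paragraph), not $\pil{p}\rstar d\,\lpo$ --- the latter is $f$ in that degree range, and applied to the $p$-filtered form $A_j\times A_k$ it gives zero outright because $\lpo$ annihilates $p$-filtered forms. The correct computation is $g(A_j\times A_k)=\rstar\,\pil{p}\rstar[\cdots]$, after which \eqref{wm05} and \eqref{relation2} convert the $-d\,\lpo(A_j\w A_k)$ term into $(\lpod A_j)\w A_k + (-1)^j A_j\w(\lpod A_k) - d\,\lpo(A_j\w A_k)$, recovering \eqref{wm11} up to the exact form $d\,\lpo(A_j\w A_k)$. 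Your step (v) worry is then moot: that leftover is $d$-exact, hence already zero as a class in $\HD{*}(M)$, and no ideal-membership argument is needed. (Your closing remark about the cases \eqref{prod3}--\eqref{prod5} is outside the scope of this lemma, which concerns only \eqref{prod22}.)
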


\begin{proof}
(Wedge product)~  Given $[\xi_j]\in\HD{j}$ and $[\xi_k]\in\HD{k}$, let $A_j = \pil{p}\xi_j$, $\eta_{j-2p-2} = L^{-(p+1)}\xi_j$, $A_k = \pil{p}\xi_k$ and $\eta_{k-2p-2} = L^{-(p+1)}\xi_k$.  Namely,
\begin{align*}
\xi_j = A_j + \om^{p+1}\w \eta_{j-2p-2}  \qquad\text{and}\qquad  \xi_k = A_k + \om^{p+1}\w \eta_{k-2p-2} ~.
\end{align*}

It follows from $d\xi_j = 0$ and $d\xi_k = 0$ that
\begin{align*}
dA_j = -\omega^{p+1}\w d \eta_{j-2p-2}  \qquad\text{and}\qquad  dA_k = -\omega^{p+1}\w d \eta_{k-2p-2} ~.
\end{align*}
Since $j\leq n+p$, $L^{-(p+1)}dA_j = -d\eta_{j-2p-2}$ and $L^{-(p+1)}dA_k = -d\eta_{k-2p-2}$.  According to (\ref{prod22}), $f(\xi_j)\times f(\xi_k)$ is equal to
\begin{align}
& A_j\times A_k \notag \\
=&\; \pil{p}\rstar\big[-d \lpo(A_j \w A_k) - (d\eta_{j-2p-2}) \w A_k - (-1)^j A_j \w (d\eta_{k-2p-2}) \big] ~.  \label{wm06}
\end{align}

The next task is to compute $f(\xi_j\w\xi_k) = -\rstar d L^{-(p+1)}(\xi_j\w\xi_k)$.  Since $j+k>n+p$, it is also equal to $-\pil{p}\rstar d L^{-(p+1)}(\xi_j\w\xi_k)$.  We write
\begin{align*}
\xi_j\w\xi_k &= A_j\w A_k + \om^{p+1}\w\zeta_{j+k-2p-2}
\end{align*}
where $\zeta_{j+k-2p-2} = A_j\w\eta_{k-2p-2} + \eta_{j-2p-2}\w A_k + \omega^p\w\eta_{j-2p-2}\w\eta_{k-2p-2}$.  Note that
\begin{align*}
d\zeta_{j+k-2p-2} &= (d\eta_{j-2p-2})\w A_k + (-1)^jA_j\w(d\eta_{k-2p-2}) ~.
\end{align*}
By applying $\pil{p}$ on \eqref{wm05} for $\xi_j\w\xi_k$, we find that
\begin{align*}
f(\xi_j\w\xi_k) &= \pil{p}\rstar\big[ \lpo d(\xi_j\w\xi_k) - d\lpo(A_j\w A_k) - d\big(\lpo(\om^{p+1}\w\zeta_{j+k-2p-2})\big) \big] ~.
\end{align*}
The first term on the right hand side is zero.  The third term can be calculated with the help of (\ref{Lei020}):
\begin{align*}
&-\pil{p}\rstar\big[d\big(\lpo(\om^{p+1}\w\zeta_{j+k-2p-2})\big)\big] \\
=&\, -\pil{p}\rstar(d\zeta_{j+k-2p-2}) + \ddm(\pil{p} \,\rstar\, \zeta_{j+k-2p-2}) \\
=&\, -\pil{p}\rstar\big[ (d\eta_{j-2p-2})\w A_k + (-1)^jA_j\w(d\eta_{k-2p-2}) \big] + \ddm(\pil{p} \,\rstar\, \zeta_{j+k-2p-2}) ~.
\end{align*}
To sum up, $f(\xi_j\w\xi_k)$ is equal to
\begin{align}\label{wm08}\begin{split}
&- \pil{p}\rstar\big[ (d\eta_{j-2p-2})\w A_k + (-1)^jA_j\w(d\eta_{k-2p-2}) - d\lpo(A_j\w A_k) \big] \\
&\quad + \ddm(\pil{p} \,\rstar\, \zeta_{j+k-2p-2})
\end{split}\end{align}
It follows from (\ref{wm08}) and (\ref{wm06}) that (\ref{prod22}) is compatible with the wedge product.

\noindent(Massey product)~  Given $[A_j]\in\FHP{p}{j}$ and $[A_k]\in\FHP{p}{k}$, $\Massey{g(A_j)}{g(A_k)}$ is completely the same as that in the proof of Lemma \ref{lem_wm01}, and the Massey product is given by (\ref{wm11}).

We now calculate $g(A_j\times A_k)$.  According to (\ref{prod22}) and (\ref{wm03}),
\begin{align}\label{wm12}\begin{split}
g(A_j\times A_k) &= \rstar\pil{p}\rstar\Big[-d\, \lpo(A_j \w A_k)  \\
&\qquad\qquad\quad + (\lpod A_j) \w A_k + (-1)^j A_j \w (\lpod A_k) \Big] ~.
\end{split}\end{align}
The first term of the right hand side can be computed with the help of (\ref{wm05}) for $A_j\w A_k$:
\begin{align*}
&-\rstar\pil{p}\rstar d\,\lpo(A_j\w A_k) \\
=&\, \lpod(A_j\w A_k) - d\lpo(A_k\w A_k) \\
=&\, \lpo L^{p+1}\Big[ (\lpod A_j)\w A_k + (-1)^k A_j\w(\lpod A_k) \Big] - d\lpo(A_k\w A_k)
\end{align*}
where the last inequality uses (\ref{wm04}).  By plugging it into (\ref{wm12}) and applying (\ref{relation2}), we have
\begin{align*}
g(A_j\times A_k) &= (L^{-(p+1)}dA_j)\w A_k + (-1)^jA_j\w(L^{-(p+1)}dA_k) - d\lpo(A_k\w A_k) ~.
\end{align*}
This togehter with (\ref{wm11}) shows that (\ref{prod22}) is compatible with the Massey product.
\end{proof}

\

\noindent{\it{(3) $\FHP{p}{j}\times\FHM{p}{k}\to\FHM{p}{k-j}$, $j\leq k$}}

\begin{lem}\label{lem_wm03}
For $j\leq k\leq n+p$, the product $\FHP{p}{j}\times\FHM{p}{k}\to\FHM{p}{k-j}$ induced by (\ref{prod3}) is compatible with the topological products.
\end{lem}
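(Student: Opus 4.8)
The plan is to verify, for the component $\FHP{p}{j}\times\FHM{p}{k}\to\FHM{p}{k-j}$, the two compatibility statements of Theorem~\ref{compthrm}, following the pattern of Lemmas~\ref{lem_wm01} and~\ref{lem_wm02}. In the local form \eqref{wm01} of the long exact sequence, with structure maps \eqref{wm03}, the first factor sits at index $j\le n+p$, so carries $(f,g)=(\pil{p},\lpod)$; the second factor $\FHM{p}{k}$ sits at index $2n+2p+1-k>n+p$, so carries $(f,g)=(\pip\rstar d\,\lpo,\rstar)$; and since $j+(2n+2p+1-k)\ge n+p+1>n+p$, the target $\FHM{p}{k-j}$ also carries $g=\rstar$. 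The identities I expect to need are $(\rstar)^2=\bo$, the completeness relations \eqref{relation1} and \eqref{relation2}, $\pips=\rstar\pip\rstar$, the conjugation \eqref{rsL}, the $p$-filtered condition \eqref{pistarf}, Lemma~\ref{Leibniz02}, the identity \eqref{wm05}, and part (iv) of the Leibniz rule (Theorem~\ref{LeiRule}).

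The Massey-product compatibility should come out as an exact equality, with no correction term. Given $[A_j]\in\FHP{p}{j}$ and $[\bA_k]\in\FHM{p}{k}$, Theorem~\ref{LeiRule}(iv) gives $\ddm(A_j\times\bA_k)=0$, so $g(A_j\times\bA_k)=\rstar(A_j\times\bA_k)=\moj(\rstar)^2\big(A_j\w\rstar\bA_k\big)=\moj\,A_j\w\rstar\bA_k$ by \eqref{prod3}. On the Massey side, $g(A_j)=\lpod A_j$, and \eqref{relation1} together with $\ddp A_j=0$ (automatic for $j<n+p$, and also for $j=n+p$ since $\FO{p}{n+p+1}=0$) yields $\opo\w(\lpod A_j)=dA_j$, so $A_j$ may serve as the auxiliary primitive of $\lpod A_j$ in \eqref{wm02}; while $g(\bA_k)=\rstar\bA_k$ satisfies $\opo\w\rstar\bA_k=0$ by \eqref{pistarf}, so the auxiliary primitive of $\rstar\bA_k$ may be taken to be $0$. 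Substituting into \eqref{wm02} gives $\Massey{\lpod A_j}{\rstar\bA_k}=\moj\,A_j\w\rstar\bA_k$, matching $g(A_j\times\bA_k)$ on the nose, hence a fortiori modulo the ideal $\CI$.

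The wedge-product compatibility is the substantive part. Take $[\xi_j]\in\HD{j}$ and $[\bar\xi_\ell]\in\HD{\ell}$ with $\ell=2n+2p+1-k$, and set $A_j=\pil{p}\xi_j$, $\bA_k=\pip\rstar d\,\lpo\bar\xi_\ell$, $\zeta=\lpo\bar\xi_\ell$. Since $\ell>n$, every Lefschetz summand of $\bar\xi_\ell$ carries at least $p+1$ powers of $\omega$, so $\bar\xi_\ell=\opo\w\zeta$; then $d\bar\xi_\ell=0$ gives $\opo\w d\zeta=0$, and \eqref{relation2} simplifies $\rstar\bA_k=\pips(d\zeta)$ down to $\rstar\bA_k=d\zeta$, whence $f(\xi_j)\times f(\bar\xi_\ell)=\moj\,\rstar\big((\pil{p}\xi_j)\w d\zeta\big)$. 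For the other side, $j+\ell>n+p$ forces $\pil{p}(\xi_j\w\bar\xi_\ell)=0$, and applying Lemma~\ref{Leibniz02} to $\xi_j\w\zeta$ (note $\opo\w\xi_j\w\zeta=\xi_j\w\bar\xi_\ell$) together with $d(\xi_j\w\zeta)=\moj\,\xi_j\w d\zeta$ gives
\[
f(\xi_j\w\bar\xi_\ell)=\pip\rstar d\,\lpo(\xi_j\w\bar\xi_\ell)=\moj\,\pprs\big(\xi_j\w d\zeta\big)-\ddm\big(\pprs(\xi_j\w\zeta)\big).
\]
It then remains to match $\pprs(\xi_j\w d\zeta)$ against $\rstar\big((\pil{p}\xi_j)\w d\zeta\big)$ modulo $\ddm$-exact terms, which I would do by splitting $\xi_j=\pil{p}\xi_j+\opo\w\lpo\xi_j$ via \eqref{relation1}, commuting the $\opo$ past $\rstar$ using \eqref{rsL}, and disposing of the remainder by a second application of Lemma~\ref{Leibniz02} and \eqref{wm05}. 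This final reconciliation --- that the two natural representatives of the product differ only by a $\ddm$-exact form, with the signs $\moj$ from \eqref{prod3} lining up --- is the step I expect to be the main obstacle; it is the $\rstar$-dual mirror of the corresponding step in the proof of Lemma~\ref{lem_wm02}.
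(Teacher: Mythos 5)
Your proposal is correct and follows the paper's own proof almost step for step. The Massey-product half (taking $A_j$ as the auxiliary form for $\lpod A_j$, taking $0$ as the auxiliary form for $\rstar\bA_k$ via \eqref{pistarf}, and matching $\moj\,A_j\w\rstar\bA_k$ on the nose) is exactly the paper's argument, and the wedge-product half sets up the same representatives and the same single application of Lemma \ref{Leibniz02} to $\xi_j\w\zeta$. The only divergence is the final reconciliation, which you leave as a sketch and flag as the main obstacle; in the paper it is a one-line observation using facts you have already recorded. Since $\opo\w d\zeta=0$, the cross term produced by the split $\xi_j=\pil{p}\xi_j+\opo\w\lpo\xi_j$ vanishes identically (the even form $\opo$ slides onto $d\zeta$), so $\pprs(\xi_j\w d\zeta)=\pprs\big((\pil{p}\xi_j)\w d\zeta\big)$ exactly; and by \eqref{pistarf} the form $\rstar\big((\pil{p}\xi_j)\w d\zeta\big)$ already lies in $\FO{p}{k-j}$, so the outer $\pil{p}$ acts as the identity. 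Hence no second application of Lemma \ref{Leibniz02} or of \eqref{wm05} is needed, and the only discrepancy between $f(\xi_j\w\bar\xi_\ell)$ and $f(\xi_j)\times f(\bar\xi_\ell)$ is the single term $\ddm\big(\pprs(\xi_j\w\zeta)\big)$ you already produced. One small point worth adding: in the edge case $k-j=n+p$ the target $\FHM{p}{n+p}$ has denominator $\dpp\dpm(\FO{p}{n+p})$ rather than $\ddm(\FO{p}{n+p+1})$, but there the correction term vanishes outright because $\pil{p}\CA^{n+p+1}=\{0\}$, as the paper notes. (The overall sign you drop relative to the paper's $f=-\rstar d\,\lpo$ is an internal inconsistency of the paper with \eqref{wm03} and is harmless, since you apply your convention uniformly to both sides.)
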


\begin{proof}
(Wedge product)~ Suppose that $[\xi_j]\in\HD{j}$ and $[\xi_{\ell}]\in\HD{\ell}$ where $\ell = 2n+2p+1-k$. 
Since $k\leq n+p$, $\ell>n+p$,  $\xi_\ell = \omega^{p+1}\w\eta_{2n-k-1}$ for some $\eta_{2n-k-1}\in\CA^{2n-k-1}$.  Let $A_j = \pil{p}\xi_j$ and $\eta_{j-2p-2} = L^{-(p+1)}\xi_j$.  Namely,
\begin{align*}
\xi_j = A_j + \om^{p+1}\w\eta_{j-2p-2}  \qquad\text{and}\qquad  \xi_\ell = \omega^{p+1}\w\eta_{2n-k-1}
\end{align*}
Since $d\xi_k = 0$ and $d\xi_j = 0$,
\begin{align*}
d A_j = -\omega^{p+1}\w d\eta_{j-2p-2}  \qquad\text{and}\qquad  \omega^{p+1}\w d\eta_{2n-k-1} = 0  ~.
\end{align*}

According to (\ref{wm03}), $f(\xi_j) = A_j$ and $f(\xi_\ell) = -\rstar d\eta_{2n-k-1}$.  By (\ref{prod3}),
\begin{align}\label{wm09}
f(\xi_j)\times f(\xi_\ell) &= -(-1)^j\rstar(A_j\w d\eta_{2n-k-1}) ~.
\end{align}
We now compute $f(\xi_j\w\xi_\ell) = -\rstar d L^{-(p+1)}(\xi_j\w\xi_\ell)$.  Due to (\ref{wm05}),
\begin{align*}
-\rstar d\,\lpo (\xi_j\w\xi_\ell) &= - \pil{p}\rstar d\,\lpo(\xi_j\w\xi_\ell) - \rstar\lpod(\xi_j\w\xi_\ell)
\end{align*}
where the second term vanishes.  Then write $\xi_j\w\xi_\ell$ as $\omega^{p+1}\w\xi_j\w\eta_{2n-k-1}$ and apply (\ref{Lei020}) for $\zeta = \xi_j\w\eta_{2n-k-1}$:
\begin{align}
f(\xi_j\w\xi_\ell) &= -\pil{p}\rstar\big(d(\xi_j\w\eta_{2n-k-1})\big) + \ddm(\pil{p}\rstar\zeta) \notag \\
&= -\pil{p}\rstar\big[(-1)^j(A_j + \omega^{p+1}\w\eta_{j-2p-2})\w d\eta_{2n-k-1}\big] + \ddm(\pil{p}\rstar\zeta) \notag \\
&= -(-1)^j\pil{p}\rstar(A_j\w d\eta_{2n-k-1}) + \ddm(\pil{p}\rstar\zeta) ~. \label{wm10}
\end{align}
The last equality uses the fact that $\omega^{p+1}\w d\eta_{2n-k-1}$.  Due to (\ref{pistarf}), $\rstar(A_j\w d\eta_{2n-k-1})\in\FO{p}{k-j}$.  That is to say, the first $\pil{p}$-operator in (\ref{wm10}) acts as the identity map.

When $k-j = n+p$, $\pil{p}\rstar\zeta$ belongs to $\pil{p}\CA^{n+p+1} = \{0\}$.  By (\ref{wm09}) and (\ref{wm10}), we conclude that (\ref{prod3}) is compatible with the wedge product.

\noindent(Massey product)~  Given $[A_j]\in\FHP{p}{j}$ and $[\bA_k]\in\FHM{p}{k}$, let $B_{j-2p} = L^{-p}A_j$.  Since $\ddp A_j = 0$, $d A_j = L^{p+1}\w(\lpod A_{j})$.  By (\ref{wm03}),
\begin{align*}
g(A_j) &= \lpod A_{j}  &\text{and}&  &g(\bA_k) &= \rstar\bA_k ~.
\end{align*}
Due to (\ref{pistarf}), $L^{p+1}(g(\bA_k)) = 0$.  And the Massey product (\ref{wm02}) is
\begin{align*}
\Massey{g(A_j)}{g(\bA_k)} &= \Massey{\dpm B_{j-2p}}{\rstar\bA_k} = (-1)^j A_j\w(\rstar \bA_k) ~.
\end{align*}
According to (\ref{prod3}) and (\ref{wm03}),
\begin{align*}
g(A_j\times\bA_k) &= \rstar(A_j\times\bA_k) = (-1)^j A_j\w(\rstar \bA_k) ~.
\end{align*}
This completes the proof of the lemma.
\end{proof}

Since all the products are graded anti-commutative, the product $\FHM{p}{j}\times\FHP{p}{k}\to\FHM{p}{j-k}$ induced by (\ref{prod4}) is also compatible with the topological products.

\

\noindent{\it{(4) $\FHM{p}{j}\times\FHM{p}{k}\to0$}}

\begin{lem}
For $j\leq n+p$ and $k\leq n+p$, the product $\FHM{p}{j}\times\FHM{p}{k}\to0$ defined by (\ref{prod5}) is compatible with the topological products.
\end{lem}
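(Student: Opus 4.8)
The plan is to observe that, unlike all the previous cases in Lemmas~\ref{lem_wm01}--\ref{lem_wm03}, this statement holds for purely dimensional reasons: every object that appears --the filtered product $\bA_j\times\bA_k$, the wedge product of the relevant de Rham classes, and their Massey product-- is forced to lie in a space that is zero. So the proof will consist of bookkeeping with degrees, using the dictionary \eqref{wm03}--\eqref{fphj} between $F^pH^{\bullet}$ and $\FHPM{p}{\bullet}$, and concluding $0=0$ in each part.

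For the wedge-product compatibility, I would first pin down which de Rham classes are relevant. If $f(\xi_a)\in\FHM{p}{j}$ and $f(\xi_b)\in\FHM{p}{k}$ with $j,k\leq n+p$, then by \eqref{wm03}--\eqref{fphj} one must have $a=2n+2p+1-j$ and $b=2n+2p+1-k$, both at least $n+p+1$. Hence $a+b\geq 2n+2p+2>2n$, so $\xi_a\w\xi_b=0$ identically as a form on $M^{2n}$, giving $f(\xi_a\w\xi_b)=f(0)=0$, while $f(\xi_a)\times f(\xi_b)=0$ by \eqref{prod5}. That finishes the wedge case.

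For the Massey-product compatibility, given $[\bA_j]\in\FHM{p}{j}$ and $[\bA_k]\in\FHM{p}{k}$ with $j,k\leq n+p$, the map $g$ in \eqref{wm03} is $\rstar$, so $g(\bA_j)=\rstar\bA_j\in\HD{2n-j}(M)$ and $g(\bA_k)=\rstar\bA_k\in\HD{2n-k}(M)$; these are $d$-closed and annihilated by $L^{p+1}$ (by \eqref{pistarf}), so $\Massey{g(\bA_j)}{g(\bA_k)}$ is defined, although we shall not even need this. Reading the degree off \eqref{wm02} --the Massey product of classes of degrees $d_1,d_2$ lies in degree $d_1+d_2+2p+1$-- we get $\Massey{g(\bA_j)}{g(\bA_k)}\in\HD{d}(M)$ with $d=(2n-j)+(2n-k)+(2p+1)\geq 2n+1>2n$, hence it vanishes. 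On the other side, $\bA_j\times\bA_k$ lies in $\Fp^{\ell}$ with $\ell=(2n+2p+1-j)+(2n+2p+1-k)\geq 2n+2p+2>2n+2p+1$, which is the zero subspace --this is precisely why \eqref{prod5} sets $\bA_j\times\bA_k=0$-- so $g(\bA_j\times\bA_k)=0$ as well.

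The only point that needs a little care --the ``main obstacle'', such as it is-- is matching the grading conventions: that $f$ landing in the minus part of $F^pH$ forces the de Rham degree to exceed $n+p$, that $g$ on the minus part is $\rstar$ so the reflection of a $j$-form has degree $2n-j$, and that the degree of the Massey product is the sum of the input degrees plus $2p+1$. Once these are fixed, both inequalities are immediate, and the lemma --which is simply the degenerate endpoint of the family of compatibility statements proved in the preceding lemmas-- follows with no further computation.
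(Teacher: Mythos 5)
Your proof is correct and, like the paper's, reduces the statement to trivial vanishing: the wedge-product half is the same degree count, and for the Massey half you note that the target group $\HD{d}(M)$ with $d\geq 2n+1$ vanishes, whereas the paper instead observes that $L^{p+1}\rstar=0$ by \eqref{pistarf} forces the Massey product itself to be zero (one may take the auxiliary primitives $\eta$ to vanish). Either observation suffices, so this is essentially the paper's argument.
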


\begin{proof}
By the simple degree counting, it is compatible with the wedge product.  It follows from (\ref{pistarf}) that $L^{p+1}g = -L^{p+1}\rstar = 0$.  Hence, the Massey product $\Massey{g(\,\cdot\,)}{g(\,\cdot\,)} = 0$ on $\FHM{p}{*}$.
\end{proof}

\

\



\vskip 1cm
\noindent
{Department of Mathematics, National Taiwan University\\
Taipei 10617, Taiwan}\\
{\it Email address:}~{\tt cjtsai@ntu.edu.tw}
\vskip .5 cm
\noindent
{Department of Mathematics, University of California, Irvine\\
Irvine, CA 92697, USA}\\
{\it Email address:}~{\tt lstseng@math.uci.edu}
\vskip .5 cm
\noindent
{Department of Mathematics, Harvard University\\
Cambridge, MA 02138, USA}\\
{\it Email address:}~{\tt yau@math.harvard.edu}

\end{document}